\documentclass[12pt,leqno,twoside]{amsart}

\usepackage{enumerate}

\usepackage{amssymb}

\usepackage{amssymb,amsthm,amsmath,eucal,mathrsfs,verbatim}

\usepackage{footnote}

\usepackage{tikz, xcolor}

\usepackage{cite}

\usepackage{amsmath}

\usepackage{amsthm}

\usepackage{amssymb}

\usepackage{amsfonts}
\usepackage{todonotes}

\usepackage{hyperref}
\usepackage{graphicx}   
\usepackage{tikz}       
\usepackage{caption}    
\usepackage{float}      
\usepackage{pgfplots}
\pgfplotsset{width=10cm,compat=1.9}
\usepgfplotslibrary{external}
\newtheorem{theorem}{Theorem}[section]

\newtheorem{lemma}[theorem]{Lemma}

\theoremstyle{definition}

\numberwithin{equation}{section}

\newcommand\restr[2]{{
  \left.\kern-\nulldelimiterspace 
  #1 
  \vphantom{\big|} 
  \right|_{#2} 
  }}

\usepackage{eucal}

\title[explicit formula for vanishing viscosity limit]{Viscous Burgers equation driven by point source: a formula for the weak limit}
\author[Pal and Sahoo\hfil\hfilneg]{Smritikana Pal and Manas R. Sahoo}
\email{}

\subjclass[2020]{35D30, 35F25, 35L03, 35L65, 35L67.}
\keywords{Burgers equation with point source; Vanishing viscosity limit; Lax-{O}le\u\i nik formula.\\
School of Mathematical Sciences, National Institute of Science Education and Research, An OCC of Homi Bhabha National Institute, Bhubaneswar, P.O. Jatni, Khurda, Odisha 752050, India. Email: smritikana.pal@niser.ac.in, manas@niser.ac.in}

\begin{document}

\begin{abstract}
In this article, we obtain the weak limit of the solutions of the viscous Burgers equation driven by a point source term, as the coefficient of viscosity tends to zero. The weak limit is related to the variational problem that consists of three types of functional, which is not usual in the absence of the source term. 
\end{abstract}
\maketitle 

\section{Introduction}
Conservation laws driven by a point source can be classified by the broader class of equations, namely
 \begin{equation}
 \label{general equation}
 u_t + F(t,x, u)_x= \alpha(t, x)\delta.
 \end{equation}
 Equations of the above type are useful for studying fluid flows in heterogeneous media, such as the fluid flow in changing rock type conditions relevant for the petroleum industry.
 This equation is closely related to the Hamilton-Jacobi equation with discontinuous flux. Hamilton-Jacobi equation with discontinuous flux function has been studied by many authors, see \cite{Adimurthi2007ExplicitHF,MR2028700,MR2299772, MR2323047}. The Starting point goes back to the work of E. Hopf\cite{hopf}, where he considered the above problem\eqref{general equation} for the case where $F(t, x, u)=\frac{u^2}{2}$ and $\alpha(t, x)=0$. He used the vanishing viscosity method and obtained the weak entropy solution to the problem \eqref{general equation} by passing to the limit as the viscosity coefficient approaches zero. Then Lax(\cite{pdlax}) and Ole\u{\i}nik
\cite{Oleinik1957, Oleinik1963} further advanced the subject. For example, Lax considered the problem \eqref{general equation} when $F(t, x, u)=f(u)$ is a convex function and $\alpha(t, x)=0$. He obtained an explicit formula for the entropy admissible weak solution. Ole\u{\i}nik studied the uniqueness by considering a one-sided Lipschitz condition. Recently, Chung, et al.\cite{chung} considered the following problem for $\epsilon=1.$

\begin{equation}\label{viscous equation}
\begin{aligned}
\begin{cases}
 & u_t+(\frac{u^2}{2})_x = \epsilon u_{xx}+ \delta,\,\,\,\,\,\, x\in \mathbb{R},\,\, t>0,\\
    &u(x, 0)=u_{0}(x),  x\in \mathbb{R}
 \end{cases}
    \end{aligned}
\end{equation}
where  $u_0$  is a bounded measurable function. Employing an interesting idea of the first order contact along the boundary, they obtained solution with less regularity along the boundary and studied the large time asymptotic of the above problem for $\epsilon=1$. Using the inverse Laplace transform, one can obtain the explicit formula for the solution of the viscous problem, see \cite{Satya}. Then the question is:``can one get the vanishing viscosity limit?" This is answered in this paper. The problem considered here can be written as the conservation law with discontinuous flux, which was well studied; see \cite{Adimurthi2007ExplicitHF,MR2028700,MR2299772, MR2323047}, and references therein.
Now we describe some of the interesting works related to this problem: 
Ostrov \cite{Ostrov} considered the problem  \begin{equation}\label{viscous equation 1}
        \begin{aligned}
            \begin{cases}
                v_t+F^{\delta}(x,v_x)=\epsilon v_{xx},&x\in \mathbb{R},t>0,\\
             v(x,0)=v_0(x),&x\in \mathbb{R}
            \end{cases}
        \end{aligned}
    \end{equation}  where $F^{\delta}$ is the regularization of $F$ and showed that the limit of  $v_{\epsilon, \delta}$, the unique solution of \eqref{viscous equation 1} converges to a unique viscosity solution $v$ of 
  \begin{equation}\label{inviscid equation 1}
        \begin{aligned}
            \begin{cases}
                v_t+F(x,v_x)=0,&x\in \mathbb{R},t>0,\\
             v(x,0)=v_0(x),&x\in \mathbb{R}
            \end{cases}
        \end{aligned}
    \end{equation} as $\epsilon \to 0, \delta \to 0.$
   Karlsen, Risebro, and Towers \cite{karlsen} showed that the solution of 
   \begin{equation*}
        \begin{aligned}
            \begin{cases}
                 u_t+F^{\delta}(x,u)_x=\epsilon u_{xx},&x\in \mathbb{R},t>0,\\
             u(x,0)=u_0(x),&x\in \mathbb{R}
            \end{cases}
        \end{aligned}
    \end{equation*} converges to a solution of \eqref{general equation} as $\epsilon \to 0, \delta \to 0$ and does not satisfy the interface entropy condition 
    \begin{equation}\label{condition}
        meas\{t:f^{\prime}(u^+(t))>0,g^{\prime}(u^-(t))<0\}=0
    \end{equation} where $F$ has a single point of discontinuity at $x=0$ and is given by $$F(x,u)=H(x)f(u)+(1-H(x))g(u)$$ and $f,g$ are Lipschitz-continuous functions and $H$ is the Heaviside function. 

From the aforementioned work, for example, karlsen et al. \cite{karlsen} studied the vanishing viscosity limit for the above problem. The vanishing viscosity approximations do not converge to the solution obtained by Adimurthi et al.\cite{Adimurthi2007ExplicitHF}. In this paper, we do not regularize the flux function and consider a less regular solution and show that our solution converges to the solution obtained by Adimurthi et al.\cite{Adimurthi2007ExplicitHF}. To obtain the solution for the viscous problem\eqref{viscous equation}, we use a method of 1st-order contact along the boundary, see\cite{chung}.

The paper is organized as follows: In section $2$, we obtain the explicit formula for vanishing viscosity approximation. In section $3,$ we obtain the distributional limit of the solutions $U^{\epsilon},$  where $U^{\epsilon}_x= u^{\epsilon}$ is the solution of the viscous problem \eqref{viscous equation}. In section $4$, we study some properties of the minimizer associated with the limit obtained in section-3 and also obtain an explicit formula for the limit function $u$. The vanishing viscosity limit agrees with the explicit formula obtained by Adimurthi et al.\cite{MR2028700} is indeed a weak solution of the inviscid equation, namely
\begin{equation}\label{inviscid equation}
\begin{aligned}
\begin{cases}
 & u_t+(\frac{u^2}{2})_x =\delta,\,\,\,\,\,\, x\in \mathbb{R},\,\, t>0,\\
    &u(x, 0)=u_{0}(x),  x\in \mathbb{R}.
 \end{cases}
    \end{aligned}
\end{equation}

\section{Explicit solution for viscous regularization}
For $\epsilon>0$, we obtain the explicit solution of the problem\eqref{viscous equation} described in the following theorem.
\begin{theorem}
The explicit solution $u^{\epsilon}, \epsilon>0$ of problem \eqref{viscous equation} is given by 
$u^{\epsilon}(x,t)=-\frac{2\epsilon \theta^{\epsilon}_x}{\theta^{\epsilon}},$
where $$\theta^{\epsilon}(x,t)=\begin{cases}
    R^{\epsilon}(x,t) & x>0, t>0\\
     L^{\epsilon}(x,t) & x<0, t>0,
\end{cases}$$
and  $R^{\epsilon},L^{\epsilon}$ are as follows:
\begin{equation*}
    \begin{aligned}
        R^{\epsilon}(x,t)&=e^{-\frac{t}{2\epsilon}}\Big[\frac{1}{2\sqrt{\pi \epsilon t}}\int_0^\infty \Big\{e^{-\frac{(x-\xi)^2}{4\epsilon t}}-e^{-\frac{(x+\xi)^2}{4\epsilon t}}\Big\}\theta^{\epsilon}_0(\xi)d\xi\\&+\int_0^t\Big(e^{\frac{\tau}{2\epsilon}}g(\tau)\Big)^\prime~~ erfc\Big(\frac{x}{2\sqrt{\epsilon(t-\tau)}}\Big)d\tau+g(0)erfc\Big(\frac{x}{2\sqrt{\epsilon t}}\Big)\Big]
    \end{aligned}
\end{equation*}
\begin{equation*}
    \begin{split}
        L^{\epsilon}(x,t)&=\frac{-1}{2\sqrt{\pi \epsilon t}}\int_0^\infty \Big\{e^{-\frac{(x-\xi)^2}{4\epsilon t}}-e^{-\frac{(x+\xi)^2}{4\epsilon t}}\Big\}\theta^{\epsilon}_0(-\xi)d\xi+\int_0^tg\prime(\tau)~~ erfc\Big(\frac{-x}{2\sqrt{\epsilon(t-\tau)}}\Big)d\tau\\&+g(0)erfc\Big(\frac{-x}{2\sqrt{\epsilon t}}\Big).
    \end{split}
\end{equation*}
Here, the terms $\theta^{\epsilon}_0$ and $g$ are as follows: 
\begin{equation}\label{definition 1}
    \begin{split}
        &\theta^{\epsilon}_0(x)=e^{-\frac{1}{2\epsilon}\int_{0}^xu_0(y)dy}\\
        &g(t)=g(0)e^{-\frac{t}{4\epsilon}}I_0(-\frac{t}{4\epsilon})+\frac{\epsilon^{\frac{3}{2}}}{\pi}\int_0^t\frac{1-e^{-\frac{\tau}{2\epsilon}}}{\tau^{\frac{3}{2}}}f(t-\tau)d\tau
        \end{split}
\end{equation}
        and $I_0$ is the Modified Bessel function of first kind. In addition, the term $f(t)$ is as follows:

    \begin{equation}\label{definition 2}
        f(t)=\frac{1}{2(\epsilon t)^{\frac{3}{2}}}\int_0^{\infty}\Big[e^{-\frac{t}{2\epsilon}}\theta^{\epsilon}_0(\xi)+\theta^{\epsilon}_0(-\xi)\Big]\xi e^{-\frac{\xi^2}{4\epsilon t}}d\xi-\frac{g(0)}{\sqrt{\epsilon t}}(e^{-\frac{t}{2\epsilon}}+1).
    \end{equation}

    Further $u^{\epsilon} \in C((0, \infty); H^1(\mathbb{R})\cap L^{\infty}(\mathbb{R}))\cap C^{\infty} \big(\mathbb{R}\setminus \{0\}) \times (0, \infty)\big)$ and satisfies the following weak formulation.
    \begin{equation}
    \label{weak formulation viscous problem}
    \int_{0}^\infty \int_{\mathbb{R}}\big[u^{\epsilon} \phi_t +\frac{1}{2} (u^{\epsilon})^2 \phi_x - \epsilon u^{\epsilon}_x \phi_x\big] dx dt +\int_{0}^{\infty} \phi(0,t) dt -\int_{\mathbb{R}} u_0 (x) \phi(x,0) dx=0,
    \end{equation}
    for all $\phi \in C_c ^{\infty} (\mathbb{R} \times [0, \infty)).$
\end{theorem}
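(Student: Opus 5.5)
We use the Cole--Hopf transformation on each half line, together with a transmission condition at $x=0$ that encodes the point source. Set $\theta^{\epsilon}=\exp\big(-\frac{1}{2\epsilon}\int_0^x u^{\epsilon}(y,t)\,dy+\text{time factor}\big)$. On $x\neq 0$ the equation $u_t+(u^2/2)_x=\epsilon u_{xx}$ reduces to the heat equation $\theta_t=\epsilon\theta_{xx}$, with initial data $\theta^{\epsilon}_0$ as in \eqref{definition 1}. The source $\delta$ produces a jump of $u^{\epsilon}$ in the flux at $x=0$, namely $[\tfrac12 u^2-\epsilon u_x]_{0^-}^{0^+}=1$ (with sign fixed by the weak form). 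In terms of $\theta$, this is absorbed by letting $U^{\epsilon}=\int_0^x u^{\epsilon}$ pick up a term linear in $t$ on one side. This explains the factor $e^{-t/(2\epsilon)}$ in $R^{\epsilon}$ and its absence in $L^{\epsilon}$. We therefore look for $R$ and $L$ solving the heat equation on $x>0$ and $x<0$ respectively, with the following interface conditions:
\begin{enumerate}[(i)]
\item continuity in the form $e^{t/2\epsilon}R(0,t)=L(0,t)=:g(t)$, an unknown boundary value;
\item a first-order contact condition relating $R_x(0,t)$ and $L_x(0,t)$, which is equivalent to continuity of $u^{\epsilon}$ across $0$, i.e.\ $R_x/R=L_x/L$ at $x=0$.
\end{enumerate}
This is the ``first order contact along the boundary'' idea of \cite{chung}.

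\textbf{Step 1: Dirichlet problems.} Given $g$, I solve the two Dirichlet problems on the half lines explicitly. The odd reflection gives the kernel $e^{-(x-\xi)^2/4\epsilon t}-e^{-(x+\xi)^2/4\epsilon t}$ for the initial data. Duhamel's principle in the form $\int_0^t h'(\tau)\,\mathrm{erfc}(\pm x/2\sqrt{\epsilon(t-\tau)})\,d\tau+h(0)\,\mathrm{erfc}(\cdot)$ handles the boundary datum, with $h=e^{\tau/2\epsilon}g$ for $R$ and $h=g$ for $L$. This reproduces the displayed formulas for $R^{\epsilon}$ and $L^{\epsilon}$, modulo the sign convention on the reflected term for $L$, which I would verify.

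\textbf{Step 2: the equation for $g$.} Imposing condition (ii) at $x=0$ turns the problem into an Abel-type Volterra equation for $g$. Differentiating the Duhamel terms in $x$ at $x=0$ produces half-derivatives $\int_0^t h'(\tau)(t-\tau)^{-1/2}\,d\tau$. Differentiating the initial-data terms produces exactly $\frac{1}{2(\epsilon t)^{3/2}}\int_0^\infty \xi e^{-\xi^2/4\epsilon t}(\cdot)\,d\xi$, which together with the $g(0)$ terms makes up $f(t)$ in \eqref{definition 2}. The resulting relation has the form (half-derivative of $(e^{t/2\epsilon}+1)g$, suitably weighted) $=f$. I solve it by Laplace transform. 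The transform of the kernel involves $\sqrt{s}$ and $\sqrt{s+1/2\epsilon}$, so the inverse brings in $e^{-t/4\epsilon}I_0(t/4\epsilon)$-type terms, which gives the Bessel term. The convolution with $(1-e^{-\tau/2\epsilon})\tau^{-3/2}$ arises as the inverse transform of $\sqrt{s+1/2\epsilon}-\sqrt{s}$ after rationalising. This reproduces the formula for $g$ in \eqref{definition 1}. I expect this to be the main obstacle: one must check the transforms carefully and, above all, justify the Laplace inversion and the convergence of the convolution near $\tau=0$. For $\epsilon$ fixed and $u_0\in L^\infty$, $f(t)=O(t^{-1/2})$ as $t\to0$, so the singular integrals are integrable; I would verify this bound first.

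\textbf{Step 3: positivity, regularity and the weak formulation.} First, $\theta^{\epsilon}>0$, which I would obtain from the maximum principle or from the representation, so that $u^{\epsilon}=-2\epsilon\theta_x/\theta$ is well defined. Interior smoothness on $(\mathbb{R}\setminus\{0\})\times(0,\infty)$ follows from heat-kernel smoothing. The bound $L^\infty$ comes from the positivity and Gaussian bounds on $\theta_x/\theta$. Membership in $H^1$ away from and across $0$ follows because $g$ is continuous for $t>0$ and condition (ii) makes $u^{\epsilon}$ continuous at $x=0$, so $u^{\epsilon}_x$ has no singular part. For \eqref{weak formulation viscous problem}, I multiply the equation by $\phi$ on each half line and integrate by parts. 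The boundary terms at $x=0^\pm$ give $[\tfrac12 u^2-\epsilon u_x]\phi(0,t)$. Using $u=-2\epsilon\theta_x/\theta$ and the time factor $e^{-t/2\epsilon}$, this jump equals $-1$, yielding the term $\int_0^\infty\phi(0,t)\,dt$. The initial term comes from $\theta^{\epsilon}(\cdot,t)\to\theta^{\epsilon}_0$ in $L^1_{loc}$, which follows from the heat-kernel representation.
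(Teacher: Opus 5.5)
Your plan follows the paper's proof. The steps match: Cole--Hopf; Dirichlet problems on the two half-lines with a common unknown trace $g$ (odd reflection plus the $\mathrm{erfc}$ form of Duhamel); the first-order contact condition at $x=0$, which gives an Abel-type equation for $g$; and a Laplace transform in which $1/\sqrt{s(s+\frac{1}{2\epsilon})}$ produces the $e^{-t/4\epsilon}I_0$ term and rationalising $1/(\sqrt{s}+\sqrt{s+\frac{1}{2\epsilon}})$ produces the $(1-e^{-\tau/2\epsilon})\tau^{-3/2}$ kernel. The paper simply defers regularity and the weak formulation to \cite{chung}, so your Step 3 is more explicit than what is written there.

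One thing must be fixed: interface condition (i) has the wrong exponent, and that exponent is exactly what encodes the source. Evaluating the displayed formulas at $x=0$ gives $R^{\epsilon}(0,t)=L^{\epsilon}(0,t)=g(t)$, because the bracket in $R^{\epsilon}$ equals $e^{t/2\epsilon}g(t)$ there. So the correct condition is continuity of $\theta^{\epsilon}$ itself, which is what the paper imposes. Equivalently, for the heat solution $w=e^{t/2\epsilon}R^{\epsilon}$ on $x>0$, the condition is $w(0,t)=e^{t/2\epsilon}L^{\epsilon}(0,t)$.

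Your Step 1 (with $h=e^{\tau/2\epsilon}g$ for $R$) silently uses this correct version. But (i) as written says $w(0,t)=e^{-t/2\epsilon}L^{\epsilon}(0,t)$ if $R$ denotes the heat solution, or $w(0,t)=L^{\epsilon}(0,t)$ if $R$ denotes $R^{\epsilon}$.

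To see why this matters for Step 3, set $\vartheta=w$ on $x>0$ and $\vartheta=L^{\epsilon}$ on $x<0$. Since $u^{\epsilon}=-2\epsilon\vartheta_x/\vartheta$ and $\vartheta_t=\epsilon\vartheta_{xx}$ on each side, one has $\tfrac12 (u^{\epsilon})^2-\epsilon u^{\epsilon}_x=2\epsilon^2\vartheta_{xx}/\vartheta=2\epsilon\,\partial_t\log\vartheta$. Hence
\[
\Big[\tfrac12 (u^{\epsilon})^2-\epsilon u^{\epsilon}_x\Big]_{0^-}^{0^+}=2\epsilon\,\partial_t\log\frac{w(0,t)}{L^{\epsilon}(0,t)}.
\]
This equals $1$ under the correct condition. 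Under the two readings of (i) it equals $-1$ or $0$, that is, a source $-\delta$ or no source at all.

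Condition (ii) is fine as stated, since $R_x/R=L_x/L$ is insensitive to the time factor; together with continuity of $\theta^{\epsilon}$ it is the paper's $R^{\epsilon}_x(0,t)=L^{\epsilon}_x(0,t)$. With (i) corrected, your argument coincides with the paper's.
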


\begin{proof}
Consider the initial value problem: \begin{equation}\label{IVP}
   \begin{split}
       u_t+uu_x-\epsilon u_{xx}&=\delta,~~~~~~~x\in\mathbb{R}, t>0\\
       u(x,0)&=u_0(x),~~x\in\mathbb{R}.
   \end{split}
\end{equation}
To convert this nonlinear equation to linear, we consider the following well-known Hopf-Cole  transformation:
\begin{equation*}
    \theta^{\epsilon}(x,t)=e^{-\frac{1}{2\epsilon}\int_{0}^{x}u(y,t)dy}.
\end{equation*}
Then \begin{equation*}
    \theta^{\epsilon}_x(x,t)=e^{-\frac{1}{2\epsilon}\int_{0}^{x}u(y,t)dy}\Big(-\frac{1}{2\epsilon}u(x,t)\Big).
\end{equation*}
Hence \begin{equation*}
    u(x,t)=-\frac{2\epsilon \theta^{\epsilon}_x}{\theta^{\epsilon}}
\end{equation*}
and \begin{equation*}
    \begin{split}
        \theta^{\epsilon}(x,0)=\theta^{\epsilon}_0(x)&=e^{-\frac{1}{2\epsilon}\int_{0}^{x}u(y,0)dy}\\
        &=e^{-\frac{1}{2\epsilon}\int_{0}^{x}u_0(y)dy}.
    \end{split}
\end{equation*}
Now, \begin{equation*}
        u_t=-2\epsilon\Big(\frac{\theta^{\epsilon}_t}{\theta^{\epsilon}}\Big)_x,
\end{equation*}
and \begin{equation*}
    \begin{split}
        &u_x=-2\epsilon\Big(\frac{\theta^{\epsilon}_x}{\theta^{\epsilon}}\Big)_x\\
        &u_{xx}=-2\epsilon\Big(\frac{\theta^{\epsilon}_x}{\theta^{\epsilon}}\Big)_{xx}.
    \end{split}
\end{equation*}
Then putting the above expressions in \eqref{IVP}, and simplifying, we get \begin{equation*}
        -2\epsilon\Big(\frac{\theta^{\epsilon}_t}{\theta^{\epsilon}}\Big)_x=\epsilon^2\Big[-2\Big(\frac{\theta^{\epsilon}_x}{\theta^{\epsilon}}\Big)_x-2\Big(\frac{\theta^{\epsilon}_x}{\theta^{\epsilon}}\Big)^2\Big]_x+\delta.
\end{equation*}
The above is true if $\theta^{\epsilon}$ satisfies 

~~~\begin{equation*}
       \theta^{\epsilon}_t=\epsilon\theta^{\epsilon}_{xx}-H(x)\frac{\theta^{\epsilon}}{2\epsilon},
\end{equation*} where $H(x)$ is the Heaviside function.
Therefore, the problem \eqref{IVP} reduces to \begin{equation}\label{reduced IVP}
    \begin{split}
          \theta^{\epsilon}_t&=\epsilon\theta^{\epsilon}_{xx}-H(x)\frac{\theta^{\epsilon}}{2\epsilon}~,x\in\mathbb{R}, t>0\\
        \theta^{\epsilon}(x,0)&=\theta^{\epsilon}_0(x)~,x\in\mathbb{R}.
    \end{split}
\end{equation}
Let \begin{math}
    R^{\epsilon}(x,t)
\end{math} be a solution in the right-half domain \begin{math}
    \{x> 0\}
\end{math} which satisfies \begin{equation}\label{IBVP 1}
   \begin{split}
       R^{\epsilon}_t-\epsilon R^{\epsilon}_{xx}&=-\frac{R^{\epsilon}}{2\epsilon},~~~~~x> 0, t>0\\
       R^{\epsilon}(x,0)&=\theta^{\epsilon}_0(x), ~~~~~x> 0\\
       R^{\epsilon}(0,t)&=g(t),~~~~~t>0.
   \end{split}
\end{equation}
Let \begin{math}
    L^{\epsilon}(x,t)
\end{math} be a solution in the left-half domain \begin{math}
    \{x<0\}
\end{math} which satisfies \begin{equation}\label{IBVP 2}
   \begin{split}
       L^{\epsilon}_t&=\epsilon L^{\epsilon}_{xx},~~~~~x< 0, t>0\\
       L^{\epsilon}(x,0)&=\theta^{\epsilon}_0(x), ~~~~~x< 0\\
       L^{\epsilon}(0,t)&=g(t),~~~~~t>0
   \end{split}
\end{equation}
where \begin{math}
    g(t)
\end{math} is a function of \begin{math}
    t
\end{math} which we shall determine.
Consider the transformation \begin{equation}\label{transformation 1}
     R^{\epsilon}(x,t)=e^{-\frac{t}{2\epsilon}}w^{\epsilon}(x,t).
\end{equation}
Then \begin{equation*}
    \begin{split}
       &R^{\epsilon}_t=e^{-\frac{t}{2\epsilon}}w^{\epsilon}_t+e^{-\frac{t}{2\epsilon}}(-\frac{1}{2\epsilon})w^{\epsilon},\\
       &R^{\epsilon}_{xx}=e^{-\frac{t}{2\epsilon}}w^{\epsilon}_{xx}.
    \end{split}
\end{equation*}
Then, simplifying \eqref{IBVP 1}, we have the following.
\begin{equation}\label{reduced IBVP 1}
    \begin{split}
        w^{\epsilon}_t&=\epsilon w^{\epsilon}_{xx},~~~~~x> 0, t>0\\
        w^{\epsilon}(x,0)&=\theta^{\epsilon}_0(x),~~~~~x>0\\
        w^{\epsilon}(0,t)&=e^{\frac{t}{2\epsilon}}g(t),~~~ t>0.
    \end{split}
\end{equation}
Therefore the solution of \eqref{reduced IBVP 1} is given by 
\begin{equation}\label{definition 3}
        w^{\epsilon}(x,t)=\frac{1}{2\sqrt{\pi \epsilon t}}\int_0^\infty \Big\{e^{-\frac{(x-\xi)^2}{4\epsilon t}}-e^{-\frac{(x+\xi)^2}{4\epsilon t}}\Big\}\theta^{\epsilon}_0(\xi)d\xi+\frac{x}{2\sqrt{\pi \epsilon}}\int_0^te^{-\frac{x^2}{4 \epsilon (t-\tau)}}\frac{e^{\frac{\tau}{2\epsilon}}g(\tau)}{(t-\tau)^{\frac{3}{2}}}d\tau.
\end{equation}
Now, the error function and complementary error function are defined, respectively. \begin{equation*}
    \begin{split}
        &err(z)=\frac{2}{\sqrt{\pi}}\int_0^z e^{-u^2}du,\\
 &erfc(z)=1-err(z)=1-\frac{2}{\sqrt{\pi}}\int_0^z e^{-u^2}du.
    \end{split}
\end{equation*}
To simplify the second term in \eqref{definition 3}, consider the integral \begin{equation*}
    \begin{split}
        &\int_0^t\Big(e^{\frac{\tau}{2\epsilon}}g(\tau)\Big)^\prime~~ erfc\Big(\frac{x}{2\sqrt{\epsilon(t-\tau)}}\Big)d\tau\\=&\Big[e^{\frac{\tau}{2\epsilon}}g(\tau)erfc\Big(\frac{x}{2\sqrt{\epsilon(t-\tau)}}\Big)\Big]_0^t-\int_0^te^{\frac{\tau}{2\epsilon}}g(\tau)(-\frac{2}{\sqrt{\pi}})e^{-\frac{x^2}{4 \epsilon (t-\tau)}}\frac{(-x)(-1)}{4\sqrt{\epsilon }(t-\tau)^{\frac{3}{2}}}d\tau\\=&-g(0)erfc\Big(\frac{x}{2\sqrt{\epsilon t}}\Big)+\int_0^t \frac{x}{2\sqrt{\pi \epsilon}}e^{-\frac{x^2}{4 \epsilon (t-\tau)}}\frac{e^{\frac{\tau}{2\epsilon}}g(\tau)}{(t-\tau)^{\frac{3}{2}}}d\tau.\\
    \end{split}
\end{equation*}
So,
\begin{equation*}
    \begin{split}
         \frac{x}{2\sqrt{\pi \epsilon}}\int_0^t e^{-\frac{x^2}{4 \epsilon (t-\tau)}}\frac{e^{\frac{\tau}{2\epsilon}}g(\tau)}{(t-\tau)^{\frac{3}{2}}}d\tau&=\int_0^t\Big(e^{\frac{\tau}{2\epsilon}}g(\tau)\Big)^\prime~~ erfc\Big(\frac{x}{2\sqrt{\epsilon(t-\tau)}}\Big)d\tau\\&+g(0)erfc\Big(\frac{x}{2\sqrt{\epsilon t}}\Big).
    \end{split}
\end{equation*}
Then from \eqref{definition 3} we get, \begin{equation*}
    \begin{split}
    w^{\epsilon}(x,t)&=\frac{1}{2\sqrt{\pi \epsilon t}}\int_0^\infty \Big\{e^{-\frac{(x-\xi)^2}{4\epsilon t}}-e^{-\frac{(x+\xi)^2}{4\epsilon t}}\Big\}\theta^{\epsilon}_0(\xi)d\xi+\int_0^t\Big(e^{\frac{\tau}{2\epsilon}}g(\tau)\Big)^\prime~~ erfc\Big(\frac{x}{2\sqrt{\epsilon(t-\tau)}}\Big)d\tau\\&+g(0)erfc\Big(\frac{x}{2\sqrt{\epsilon t}}\Big).
    \end{split}
\end{equation*}
Therefore, from \eqref{transformation 1} we have \begin{equation*}
    \begin{split}
        R^{\epsilon}(x,t)&=e^{-\frac{t}{2\epsilon}}\Big[\frac{1}{2\sqrt{\pi \epsilon t}}\int_0^\infty \Big\{e^{-\frac{(x-\xi)^2}{4\epsilon t}}-e^{-\frac{(x+\xi)^2}{4\epsilon t}}\Big\}\theta^{\epsilon}_0(\xi)d\xi\\&+\int_0^t\Big(e^{\frac{\tau}{2\epsilon}}g(\tau)\Big)^\prime~~ erfc\Big(\frac{x}{2\sqrt{\epsilon(t-\tau)}}\Big)d\tau+g(0)erfc\Big(\frac{x}{2\sqrt{\epsilon t}}\Big)\Big].
    \end{split}
\end{equation*}
Now, the solution of \eqref{IBVP 2} is given by \begin{equation}\label{definition 4}
    \begin{split}
        &L^{\epsilon}(x,t)\\&=\frac{-1}{2\sqrt{\pi \epsilon t}}\int_0^\infty \Big\{e^{-\frac{(x-\xi)^2}{4\epsilon t}}-e^{-\frac{(x+\xi)^2}{4\epsilon t}}\Big\}\theta^{\epsilon}_0(-\xi)d\xi-\frac{x}{2\sqrt{\pi \epsilon}}\int_0^te^{-\frac{x^2}{4 \epsilon (t-\tau)}}\frac{g(\tau)}{(t-\tau)^{\frac{3}{2}}}d\tau.
    \end{split}
\end{equation}
To simplify the second term in \eqref{definition 4}, consider the integral 
\begin{equation*}
    \begin{split}
        &\int_0^tg\prime(\tau)~~ erfc\Big(\frac{-x}{2\sqrt{\epsilon(t-\tau)}}\Big)d\tau\\&=\Big[g(\tau)erfc\Big(\frac{-x}{2\sqrt{\epsilon(t-\tau)}}\Big)\Big]_0^t-\int_0^tg(\tau)(-\frac{2}{\sqrt{\pi}})e^{-\frac{x^2}{4 \epsilon (t-\tau)}}\frac{(-x)(-1)(-1)}{4\sqrt{\epsilon }(t-\tau)^{\frac{3}{2}}}d\tau\\&=-g(0)erfc\Big(\frac{-x}{2\sqrt{\epsilon t}}\Big)-\int_0^t \frac{x}{2\sqrt{\pi \epsilon}}e^{-\frac{x^2}{4 \epsilon (t-\tau)}}\frac{g(\tau)}{(t-\tau)^{\frac{3}{2}}}d\tau.
    \end{split}
\end{equation*}
So,
\begin{equation*}
    \begin{split}
         \frac{-x}{2\sqrt{\pi \epsilon}}\int_0^t e^{-\frac{x^2}{4 \epsilon (t-\tau)}}\frac{g(\tau)}{(t-\tau)^{\frac{3}{2}}}d\tau&=\int_0^tg\prime(\tau)~~ erfc\Big(\frac{-x}{2\sqrt{\epsilon(t-\tau)}}\Big)d\tau\\&+g(0)erfc\Big(\frac{-x}{2\sqrt{\epsilon t}}\Big).
    \end{split}
\end{equation*}
Therefore, from \eqref{definition 4} we have,\begin{equation}\label{9}
    \begin{split}
        L^{\epsilon}(x,t)&=\frac{-1}{2\sqrt{\pi \epsilon t}}\int_0^\infty \Big\{e^{-\frac{(x-\xi)^2}{4\epsilon t}}-e^{-\frac{(x+\xi)^2}{4\epsilon t}}\Big\}\theta^{\epsilon}_0(-\xi)d\xi+\int_0^tg^\prime(\tau)~~ erfc\Big(\frac{-x}{2\sqrt{\epsilon(t-\tau)}}\Big)d\tau\\&+g(0)erfc\Big(\frac{-x}{2\sqrt{\epsilon t}}\Big).
    \end{split}
\end{equation}
Now we calculate \begin{math}
    R^{\epsilon}_x(x,t) ~~\text{and}~~ L^{\epsilon}_x(x,t) 
\end{math}~and then we assume \begin{math}
     R^{\epsilon}_x(0,t)=L^{\epsilon}_x(0,t)
\end{math} to find the value of the unknown function \begin{math}
    g(t).
\end{math}
Now, \begin{equation}\label{definition 5}
    \begin{split}
        R^{\epsilon}_x(x,t)&=e^{-\frac{t}{2\epsilon}}\Big[\frac{1}{2\sqrt{\pi \epsilon t}}\int_0^\infty \Big\{e^{-\frac{(x-\xi)^2}{4\epsilon t}}\frac{-(x-\xi)}{2\epsilon t}-e^{-\frac{(x+\xi)^2}{4\epsilon t}}\frac{-(x+\xi)}{2\epsilon t}\Big\}\theta^{\epsilon}_0(\xi)d\xi\\&+\int_0^t\Big(e^{\frac{\tau}{2\epsilon}}g(\tau)\Big)^\prime~~\Big(-\frac{2}{\sqrt{\pi}}\Big)\frac{e^{\frac{-x^2}{4\epsilon(t-\tau)} }}{2\sqrt{\epsilon(t-\tau)}}d\tau+g(0)\Big(-\frac{2}{\sqrt{\pi}}\Big)\frac{e^{\frac{-x^2}{4\epsilon t} }}{2\sqrt{\epsilon t)}}\Big]\\&=e^{-\frac{t}{2\epsilon}}\Big[\frac{1}{2\sqrt{\pi \epsilon t}}\int_0^\infty \Big\{\frac{(\xi-x)}{2\epsilon t}e^{-\frac{(x-\xi)^2}{4\epsilon t}}+\frac{(x+\xi)}{2\epsilon t}e^{-\frac{(x+\xi)^2}{4\epsilon t}}\Big\}\theta^{\epsilon}_0(\xi)d\xi\\&-\int_0^t\Big(e^{\frac{\tau}{2\epsilon}}g(\tau)\Big)^\prime~~\frac{e^{\frac{-x^2}{4\epsilon(t-\tau)} }}{\sqrt{\pi\epsilon(t-\tau)}}d\tau-\frac{g(0)e^{\frac{-x^2}{4\epsilon t} }}{\sqrt{\pi\epsilon t}}\Big]\\&=e^{-\frac{t}{2\epsilon}}\Big[\frac{1}{4\sqrt{\pi}(\epsilon t)^{\frac{3}{2}}}\int_0^{\infty} \theta^{\epsilon}_0(\xi)\Big((\xi-x)e^{-\frac{(x-\xi)^2}{4\epsilon t}}+(\xi+x)e^{-\frac{(x+\xi)^2}{4\epsilon t}}\Big)d\xi\Big]\\&-\frac{e^{-\frac{t}{2\epsilon}}}{\sqrt{\pi}}\int_0^t\Big(e^{\frac{\tau}{2\epsilon}}g(\tau)\Big)^\prime \frac{e^{\frac{-x^2}{4\epsilon(t-\tau)} }}{\sqrt{\epsilon(t-\tau)}}d\tau-\frac{e^{-\frac{t}{2\epsilon}}g(0)}{\sqrt{\pi}}\frac{e^{\frac{-x^2}{4\epsilon t} }}{\sqrt{\epsilon t}}
    \end{split}
\end{equation}
and, \begin{equation}\label{definition 6}
    \begin{split}
        L^{\epsilon}_x(x,t)&=\frac{-1}{2\sqrt{\pi \epsilon t}}\int_0^\infty \Big\{e^{-\frac{(x-\xi)^2}{4\epsilon t}}\frac{-(x-\xi)}{2\epsilon t}-e^{-\frac{(x+\xi)^2}{4\epsilon t}}\frac{-(x+\xi)}{2\epsilon t}\Big\}\theta^{\epsilon}_0(-\xi)d\xi\\&+\int_0^tg\prime(\tau)~~\Big(-\frac{2}{\sqrt{\pi}}\Big)\frac{-e^{\frac{-x^2}{4\epsilon(t-\tau)} }}{2\sqrt{\epsilon(t-\tau)}}d\tau+g(0)\Big(-\frac{2}{\sqrt{\pi}}\Big)\frac{-e^{\frac{-x^2}{4\epsilon t} }}{2\sqrt{\epsilon t)}}\\&=\frac{-1}{2\sqrt{\pi \epsilon t}}\int_0^\infty \Big\{e^{-\frac{(x-\xi)^2}{4\epsilon t}}\frac{-(x-\xi)}{2\epsilon t}+e^{-\frac{(x+\xi)^2}{4\epsilon t}}\frac{(x+\xi)}{2\epsilon t}\Big\}\theta^{\epsilon}_0(-\xi)d\xi\\&+\int_0^t\frac{g^\prime(\tau)}{\sqrt{\pi\epsilon(t-\tau)}}e^{\frac{-x^2}{4\epsilon(t-\tau)} }d\tau+\frac{g(0)}{\sqrt{\pi\epsilon t}}e^{\frac{-x^2}{4\epsilon t} }\\&=\frac{-1}{4\sqrt{\pi}(\epsilon t)^{\frac{3}{2}}}\int_0^{\infty}\theta^{\epsilon}_0(-\xi)\Big\{(\xi-x)e^{-\frac{(x-\xi)^2}{4\epsilon t}}+(x+\xi)e^{-\frac{(x+\xi)^2}{4\epsilon t}}\Big\}d\xi\\&+\int_0^t\frac{g^\prime(\tau)}{\sqrt{\pi\epsilon(t-\tau)}}e^{\frac{-x^2}{4\epsilon(t-\tau)} }d\tau+\frac{g(0)}{\sqrt{\pi\epsilon t}}e^{\frac{-x^2}{4\epsilon t}}.
    \end{split}
\end{equation}
Putting \begin{math}
    x=0
\end{math} in \eqref{definition 5} and \eqref{definition 6} we get the following respectively, \begin{equation*}
\begin{split}
     R^{\epsilon}_x(0,t)&=e^{-\frac{t}{2\epsilon}}\Big[\frac{1}{4\sqrt{\pi}(\epsilon t)^{\frac{3}{2}}}\int_0^{\infty}2\theta^{\epsilon}_0(\xi)\xi e^{-\frac{\xi^2}{4\epsilon t}}d\xi\Big]\\&-\frac{e^{-\frac{t}{2\epsilon}}}{\sqrt{\pi}}\int_0^t\Big(e^{\frac{\tau}{2\epsilon}}g(\tau)\Big)^\prime \frac{1}{\sqrt{\epsilon(t-\tau)}}d\tau-\frac{e^{-\frac{t}{2\epsilon}}g(0)}{\sqrt{\pi\epsilon t}},
\end{split}
\end{equation*}
and \begin{equation*}
    \begin{split}
        L^{\epsilon}_x(0,t)&=\frac{-1}{4\sqrt{\pi}(\epsilon t)^{\frac{3}{2}}}\int_0^{\infty}\theta^{\epsilon}_0(-\xi)\{2\xi e^{\frac{-\xi^2}{4\epsilon t}}\}d\xi+\int_0^tg^\prime(\tau)\frac{1}{\sqrt{\pi \epsilon (t-\tau)}}d\tau+\frac{g(0)}{\sqrt{\pi\epsilon t}}.
    \end{split}
\end{equation*}
Assuming \begin{math}
    R^{\epsilon}_x(0,t)=L^{\epsilon}_x(0,t),
\end{math} we get, \begin{equation*}
    \begin{split}
        &\frac{1}{2\sqrt{\pi}(\epsilon t)^{\frac{3}{2}}}\int_0^{\infty}e^{-\frac{t}{2\epsilon}}\theta^{\epsilon}_0(\xi)\xi e^{-\frac{\xi^2}{4\epsilon t}}d\xi-\frac{e^{-\frac{t}{2\epsilon}}}{\sqrt{\pi}}\int_0^t\Big(e^{\frac{\tau}{2\epsilon}}g(\tau)\Big)^\prime \frac{1}{\sqrt{\epsilon(t-\tau)}}d\tau-\frac{e^{-\frac{t}{2\epsilon}}g(0)}{\sqrt{\pi\epsilon t}}\\&=\frac{-1}{2\sqrt{\pi}(\epsilon t)^{\frac{3}{2}}}\int_0^{\infty}\theta^{\epsilon}_0(-\xi)\xi e^{\frac{-\xi^2}{4\epsilon t}}d\xi+\frac{1}{\sqrt{\pi}}\int_0^tg^\prime(\tau)\frac{1}{\sqrt{ \epsilon (t-\tau)}}d\tau+\frac{g(0)}{\sqrt{\pi\epsilon t}}.
    \end{split}
\end{equation*}
Simplifying this equation, we get \begin{equation*}
    \begin{split}
        &\frac{1}{2(\epsilon t)^{\frac{3}{2}}}\int_0^{\infty}\Big[e^{-\frac{t}{2\epsilon}}\theta^{\epsilon}_0(\xi)+\theta^{\epsilon}_0(-\xi)\Big]\xi e^{-\frac{\xi^2}{4\epsilon t}}d\xi-\frac{g(0)}{\sqrt{\epsilon t}}(e^{-\frac{t}{2\epsilon}}+1)\\&=\int_0^te^{-\frac{(t-\tau)}{2\epsilon}}\Big(\frac{g(\tau)}{2\epsilon}+g^\prime(\tau)\Big)\frac{1}{\sqrt{ \epsilon (t-\tau)}}d\tau+\int_0^t\frac{g^\prime(\tau)}{\sqrt{\epsilon(t-\tau)}}d\tau
    \end{split}
\end{equation*}
which decides the boundary condition \begin{math}
    g(t).
\end{math} In the above equation, let us denote the LHS as $  f(t).$
Then the above equation can be written as \begin{equation}\label{definition 7}
    \begin{split}
        f(t)&=\frac{1}{2(\epsilon t)^{\frac{3}{2}}}\int_0^{\infty}\Big[e^{-\frac{t}{2\epsilon}}\theta^{\epsilon}_0(\xi)+\theta^{\epsilon}_0(-\xi)\Big]\xi e^{-\frac{\xi^2}{4\epsilon t}}d\xi-\frac{g(0)}{\sqrt{\epsilon t}}(e^{-\frac{t}{2\epsilon}}+1)\\&=\Big[\Big(\frac{g(\tau)}{2\epsilon}+g^\prime(\tau)\Big)*\frac{e^{-\frac{\tau}{2\epsilon}}}{\sqrt{\epsilon \tau}}\Big](t)+\Big[g^\prime(\tau)*\frac{1}{\sqrt{\epsilon \tau}}\Big](t).
    \end{split}
\end{equation}
Now taking the Laplace transform on both sides of \eqref{definition 7} we get \begin{equation*}
   \begin{split}
        L\{f(t)\}&=L\Big\{\Big(\frac{g(\tau)}{2\epsilon}+g^\prime(\tau)\Big)\Big\}L\Big\{\frac{e^{-\frac{\tau}{2\epsilon}}}{\sqrt{\epsilon \tau}}\Big\}+L\Big\{g^\prime(\tau)\Big\}L\Big\{\frac{1}{\sqrt{\epsilon \tau}}\Big\}\\&=\Big[\frac{G(s)}{2\epsilon}+sG(s)-g(0)\Big]\frac{\sqrt{\pi}}{\sqrt{\epsilon(s+\frac{1}{2\epsilon})}}+\Big[sG(s)-g(0)\Big]\frac{\sqrt{\pi}}{\sqrt{\epsilon s}}
   \end{split}
\end{equation*} where $G(s)$ is the Laplace transform of $g(\tau)$.
Simplifying this equation, we get \begin{equation*}
    G(s)=\frac{g(0)}{\sqrt{s(s+\frac{1}{2\epsilon})}}+\frac{\sqrt{\epsilon} L\{f(t)\}}{\sqrt{\pi}\Big(\sqrt{s}+\sqrt{s+\frac{1}{2\epsilon}}\Big)}.
\end{equation*}
Taking the inverse Laplace transform, we get the following 
\begin{equation}\label{defnition 8}
    g(t)=L^{-1}\Big\{\frac{g(0)}{\sqrt{s(s+\frac{1}{2\epsilon})}}\Big\}+\frac{\sqrt{\epsilon}}{\sqrt{\pi}}L^{-1}\Big\{\frac{L\{f(t)\}}{\sqrt{s}+\sqrt{s+\frac{1}{2\epsilon}}}\Big\}.
\end{equation}
Now we have for any constants $a,b,$ \begin{equation*}
    \begin{split}
        &L^{-1}\Big\{\frac{1}{\sqrt{(s+a)(s+b)}}\Big\}=e^{-\frac{(a+b)}{2}t}I_0(\frac{(a-b)t}{2})\\&L^{-1}\Big\{\frac{1}{\sqrt{(s+a)}+\sqrt{(s+b)}}\Big\}=\frac{1}{a-b}\Big(\frac{e^{-bt}-e^{-at}}{2\sqrt{\pi t^3}}\Big)
    \end{split}
\end{equation*}
where $I_0$ is  the Modified Bessel function of the first kind and for any functions $P,Q$ $$L^{-1}\Big\{L\{P(t)\}L\{Q(t)\}\Big\}=(P*Q)(t).$$
Hence from equation  \eqref{defnition 8}
\begin{equation*}
   g(t)=g(0)e^{-\frac{t}{4\epsilon}}I_0(-\frac{t}{4\epsilon})+\frac{\epsilon^{\frac{3}{2}}}{\pi}\int_0^t\frac{1-e^{-\frac{\tau}{2\epsilon}}}{\tau^{\frac{3}{2}}}f(t-\tau)d\tau.
\end{equation*}
Therefore, in the right-half domain \begin{math}
    \{x>0\},
\end{math}
\begin{math}
    u^{\epsilon}(x,t)
\end{math}
\begin{equation*}
       =\frac{\begin{aligned}
       &-\frac{1}{2t^{\frac{3}{2}}\sqrt{\pi \epsilon}}\int_0^{\infty}\theta^{\epsilon}_0(\xi)\Big[(\xi-x)e^{-\frac{(x-\xi)^2}{4\epsilon t}}+(\xi+x)e^{-\frac{(x+\xi)^2}{4\epsilon t}}\Big]d\xi+2\sqrt{\frac{\epsilon}{\pi}}\int_0^t(e^{\frac{\tau}{2\epsilon}}g(\tau))^\prime\frac{e^{-\frac{x^2}{4\epsilon(t-\tau)}}}{\sqrt{t-\tau}}d\tau\\&+2\sqrt{\frac{\epsilon}{\pi}}g(0)\frac{e^{-\frac{x^2}{4\epsilon t}}}{\sqrt{t}}\end{aligned}}{\frac{1}{2\sqrt{\pi \epsilon t}}\int_0^{\infty}\Big[e^{-\frac{(x-\xi)^2}{4\epsilon t}}-e^{-\frac{(x+\xi)^2}{4\epsilon t}}\Big]\theta^{\epsilon}_0(\xi)d\xi+\int_0^t(e^{\frac{\tau}{2\epsilon}}g(\tau))^\prime ~erfc\Big(\frac{x}{2\sqrt{\epsilon(t-\tau)}}\Big)d\tau+g(0)erfc\Big(\frac{x}{2\sqrt{\epsilon t}}\Big)}
\end{equation*}
and in the left-half domain \begin{math}
    \{x<0\},
\end{math}
\begin{math}
    u^{\epsilon}(x,t)
\end{math}
\begin{equation*}
       =\frac{\begin{aligned}
       &\frac{1}{2t^{\frac{3}{2}}\sqrt{\pi \epsilon}}\int_0^{\infty}\theta^{\epsilon}_0(-\xi)\Big[(\xi-x)e^{-\frac{(x-\xi)^2}{4\epsilon t}}+(\xi+x)e^{-\frac{(x+\xi)^2}{4\epsilon t}}\Big]d\xi-2\sqrt{\frac{\epsilon}{\pi}}\int_0^tg^\prime(\tau)\frac{e^{-\frac{x^2}{4\epsilon(t-\tau)}}}{\sqrt{t-\tau}}d\tau\\&-2\sqrt{\frac{\epsilon}{\pi}}g(0)\frac{e^{-\frac{x^2}{4\epsilon t}}}{\sqrt{t}}\end{aligned}}{\frac{-1}{2\sqrt{\pi \epsilon t}}\int_0^{\infty}\Big[e^{-\frac{(x-\xi)^2}{4\epsilon t}}-e^{-\frac{(x+\xi)^2}{4\epsilon t}}\Big]\theta^{\epsilon}_0(-\xi)d\xi+\int_0^tg^\prime(\tau) ~erfc\Big(\frac{-x}{2\sqrt{\epsilon(t-\tau)}}\Big)d\tau+g(0)erfc\Big(\frac{-x}{2\sqrt{\epsilon t}}\Big)}
\end{equation*}
where 
\begin{equation*}
    \begin{split}
        &\theta^{\epsilon}_0(x)=e^{-\frac{1}{2\epsilon}\int_{0}^xu_0(y)dy}\\
        &g(t)=g(0)e^{-\frac{t}{4\epsilon}}I_0(-\frac{t}{4\epsilon})+\frac{\epsilon^{\frac{3}{2}}}{\pi}\int_0^t\frac{1-e^{-\frac{\tau}{2\epsilon}}}{\tau^{\frac{3}{2}}}f(t-\tau)
        d\tau\\&f(t)=\frac{1}{2(\epsilon t)^{\frac{3}{2}}}\int_0^{\infty}\Big[e^{-\frac{t}{2\epsilon}}\theta^{\epsilon}_0(\xi)+\theta^{\epsilon}_0(-\xi)\Big]\xi e^{-\frac{\xi^2}{4\epsilon t}}d\xi-\frac{g(0)}{\sqrt{\epsilon t}}(e^{-\frac{t}{2\epsilon}}+1).
\end{split}
\end{equation*}
 We can verify using the same method as in \cite{chung} that $u^{\epsilon} \in C((0, \infty); H^1(\mathbb{R})\cap L^{\infty}(\mathbb{R}))$ and satisfies \eqref{weak formulation viscous problem}. The calculations above show that the solution is unique and $u^{\epsilon}$ is $C^{\infty}$ away from the boundary.
\end{proof}
\section{Vanishing viscosity limit} 
In this section, we study the limiting behavior of the solutions of viscous regularizations. The limit can be formulated as the minimization of three functionals. This fact is described in the following theorem.
\begin{theorem}\label{vanishing viscosity limit}
For a.e. $(x,t)\in (0,\infty)\times(0,\infty) $, the approximations $[-2\epsilon \log R^{\epsilon}(x,t)]$ converge to $U_R(x,t),$
where $$U_R(x,t)=\min\Big (U_R^ 1(x,t), U_R^ 2(x,t),U_R^3(x,t)\Big)+t$$ 
and $U_R^ 1(x,t)$, $U_R^ 2(x,t)$, $U_R^3(x,t)$ are given by
\begin{equation*}
\begin{aligned}
U_R^ 1(x, t)&=\min_{\substack{0 \le \tau < t \\ 0 \le u < \tau \\ \xi \ge 0}}\Big[ \frac{x^2}{2(t-\tau)}+\frac{\xi ^2}{2 (\tau-u)}-u +\int_{0}^{\xi} u_0(z)dz \Big]\\
U_R^ 2(x,t)&=\min_{\substack{0 \le \tau < t  \\ \xi \ge 0}}\Big[ \frac{x^2}{2(t-\tau)}+\frac{\xi ^2}{2 \tau} -\tau+\int_{0}^{-\xi} u_0(z)dz \Big]\\
U_R^3(x,t)&= \min_{\xi \geq 0}\Big[ \frac{(x-\xi)^2}{2 t} + \int_{0}^{\xi} u_0(z)dz\Big].
\end{aligned}
\end{equation*}
For a.e. $(x,t)\in (-\infty,0)\times(0,\infty)$, the approximations $[-2\epsilon \log L^{\epsilon}(x,t)]$ converge to $U_L(x,t),$
where $$U_L(x,t)=\min\Big (U_L^ 1(x,t), U_L^ 2(x,t),U_L^3(x,t)\Big)$$ 
and $U_L^ 1(x,t)$, $U_L^ 2(x,t)$, $U_L^3(x,t)$ are given by
\begin{equation*}
\begin{aligned}
U_L^ 1(x, t)&=\min_{\substack{0 \le \tau < t \\ 0 \le u < \tau \\ \xi \ge 0}}\Big[ \frac{x^2}{2(t-\tau)}+\frac{\xi ^2}{2 (\tau-u)}+\tau-u +\int_{0}^{\xi} u_0(z)dz \Big]\\
U_L^ 2(x,t)&=\min_{\substack{0 \le \tau < t  \\ \xi \ge 0}}\Big[ \frac{x^2}{2(t-\tau)}+\frac{\xi ^2}{2 \tau}+\int_{0}^{-\xi} u_0(z)dz \Big]\\
U_L^3(x,t)&= \min_{\xi \geq 0}\Big[ \frac{(x+\xi)^2}{2 t} + \int_{0}^{-\xi} u_0(z)dz\Big].
\end{aligned}
\end{equation*}
\end{theorem}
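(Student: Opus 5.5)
The approach is Laplace's method (the Hopf argument): write $R^{\epsilon}$ and $L^{\epsilon}$ as sums of a few integrals whose integrands are exponentials $e^{-\Phi/(2\epsilon)}$ times factors that are at most algebraic in $\epsilon$. Then $-2\epsilon\log$ of each integral tends to the minimum of the corresponding phase, and $-2\epsilon\log$ of a sum of positive terms tends to the minimum of the individual limits.

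First I rewrite the explicit formula of the previous theorem into such a form. For $x>0$, undo the integration by parts, i.e.\ go back to \eqref{definition 3}. This gives
\[
R^{\epsilon}=e^{-t/2\epsilon}\Big[\tfrac{1}{2\sqrt{\pi\epsilon t}}\int_0^\infty\big(e^{-(x-\xi)^2/4\epsilon t}-e^{-(x+\xi)^2/4\epsilon t}\big)\theta_0^\epsilon(\xi)\,d\xi+\tfrac{x}{2\sqrt{\pi\epsilon}}\int_0^t e^{-x^2/4\epsilon(t-\tau)}\tfrac{e^{\tau/2\epsilon}g(\tau)}{(t-\tau)^{3/2}}\,d\tau\Big].
\]
The first term, with $\theta_0^\epsilon(\xi)=e^{-\frac1{2\epsilon}\int_0^\xi u_0}$, produces $U_R^3+t$. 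The subtracted reflected Gaussian is exponentially smaller for $x>0$, so it does not change the limit: factor out $1-e^{-x\xi/\epsilon t}$ and handle small $\xi$ separately.

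The second term needs the asymptotics of the boundary trace $g$, and this is the core of the proof. By the formula for $g$, for small $\epsilon$ one has $I_0(-s)e^{-s}\sim (2\pi s)^{-1/2}$, which is algebraic. Hence the $g(0)$ part contributes no exponential weight; taking $g(0)=\theta^\epsilon(0,0)=1$, it contributes $\Phi\equiv0$ at the boundary point. In $f$, the two pieces give phases $\frac{\xi^2}{2t}+t+\int_0^\xi u_0$ and $\frac{\xi^2}{2t}+\int_0^{-\xi}u_0$. The kernel $(1-e^{-\tau/2\epsilon})\tau^{-3/2}$ carries no exponential weight. So, up to algebraic factors, $g(\tau)$ behaves like $\exp\big(-\frac1{2\epsilon}\min\{\,\min_{u,\xi}[\frac{\xi^2}{2(\tau-u)}+(\tau-u)+\int_0^\xi u_0],\ \min_\xi[\frac{\xi^2}{2\tau}+\int_0^{-\xi}u_0],\ 0\,\}\big)$.

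Inserting this into the $\tau$-integral with weight $e^{\tau/2\epsilon}e^{-x^2/4\epsilon(t-\tau)}$ and the prefactor $e^{-t/2\epsilon}$, the phases combine as $\frac{x^2}{2(t-\tau)}-\tau+\dots+t$. After relabelling, this yields $U_R^1+t$ and $U_R^2+t$. The sign bookkeeping should be checked: the extra $-u$ in $U_R^1$ and $-\tau$ in $U_R^2$ come from $e^{\tau/2\epsilon}$ combined with the $(\tau-u)$ weight. The $g(0)$ piece is presumably absorbed as the boundary case $\xi=0$. The case $x<0$ is identical, using \eqref{definition 4} without the $e^{\mp t/2\epsilon}$ factors, which explains why $U_L^1$ carries $+(\tau-u)$ and $U_L^2$ carries no $\tau$ term.

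Two technical steps remain. (i) Prove a Laplace lemma in the form: if $0\le F_\epsilon$ and $\int F_\epsilon = \int a_\epsilon(y)e^{-\Phi(y)/2\epsilon}dy$ with $\epsilon\log a_\epsilon\to0$ locally uniformly, $\Phi$ lower semicontinuous, continuous near its minimizers, and coercive, then $-2\epsilon\log\int F_\epsilon\to\min\Phi$. Coercivity holds because $u_0\in L^\infty$ makes $\int_0^\xi u_0$ grow linearly while $\xi^2/(2s)$ grows quadratically. (ii) Positivity: $f$, and hence $g$, involves a difference ($-g(0)/\sqrt{\epsilon t}$), so lower bounds need care. Here I would use that $\theta^\epsilon>0$ by the maximum principle, together with the fact that $g=\theta^\epsilon(0,\cdot)$ gives two-sided bounds directly. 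Alternatively, I would bound $g$ via comparison with the heat equation with zero/linear absorption, using $e^{-\tau/2\epsilon}$-weighted sub- and supersolutions.

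The main obstacle is the precise exponential asymptotics of $g$. I would first try to obtain them not from the Bessel/Laplace formula but from a Feynman–Kac representation, $\theta^\epsilon(0,\tau)=E[\theta_0^\epsilon(\sqrt{2\epsilon}B_\tau)\,e^{-\frac{1}{2\epsilon}\int_0^\tau H(\sqrt{2\epsilon}B_s)ds}]$. Large deviations (Schilder) for this expectation directly give the variational formula with piecewise-linear paths: one straight leg to the interface, time $u$ spent on $x=0$ at zero cost (this produces the $u$ variable), and a leg out. This route is more robust, and the whole theorem follows from Varadhan's lemma. The only delicate point is the discontinuity of $H$, which is handled by noting that paths spending positive time in $\{x>0\}$ have a cost that is lower semicontinuous and approximable, so the upper and lower bounds match almost everywhere.
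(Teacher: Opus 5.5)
Your proposal is correct, but the argument that actually carries it is the Feynman--Kac plus Schilder/Varadhan route, applied to $\theta^\epsilon(x,t)$ itself as you say at the end. That route is genuinely different from the paper's, which never leaves the explicit formula.

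\textbf{What the paper does.} It substitutes the Laplace-transform expression for $g$ into $R^\epsilon$ and writes $(1-e^{-u/2\epsilon})u^{-3/2}=\frac{1}{2\epsilon u^{1/2}}\int_0^1 e^{-\theta u/2\epsilon}\,d\theta$. This produces seven explicit integrals $I_1,\dots,I_7$, grouped as $(I_1-I_2)+(I_3-I_6-I_7)+(I_4+I_5)$. It then proves both differences are nonnegative. For the middle group it shows the coefficient of $g(0)$ in $g$ is nonnegative, using $g>0$ and letting $u_0=M\,\mathrm{sign}(x)$ with $M\to\infty$, which kills every $\theta_0^\epsilon$-term. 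With this in hand, Laplace's method runs term by term. The upper bound sums all $I_i$. The lower bound keeps only $I_4+I_5$, or $I_1-I_2$ when $U_R^3$ is strictly smallest, localized near a minimizer.

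\textbf{Why your first route would not close.} That grouping is exactly what your first route is missing.
\begin{itemize}
\item Positivity of $g$ alone gives no exponential lower bound.
\item Comparison using $0\le H\le 1$ only sandwiches $g(\tau)$ between $e^{-\tau/2\epsilon}h(0,\tau)$ and $h(0,\tau)$, where $h$ is the free heat solution, and that is not sharp.
\item Term-by-term Laplace on $f$ is invalid because of genuine cancellation. For $u_0\equiv 0$ one has $f\equiv 0$ exactly: the $\theta_0^\epsilon$-part cancels the $-g(0)$ term.
\end{itemize}

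\textbf{What the probabilistic route buys.} It sidesteps all of this. It identifies $-2\epsilon\log\theta^\epsilon$ with the Lax--Ole\u{\i}nik value for $U_t+\frac12 U_x^2=H(x)$. It explains the variable $u$ as time spent on the interface, where the lower semicontinuous envelope of $H$ vanishes. It gives convergence at every $x\neq 0$ (with infima in place of minima), not just almost everywhere. It needs neither uniqueness nor attainment of minimizers, both of which the paper's lower bound invokes.

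\textbf{What it costs.} There are several checks the paper never needs.
\begin{itemize}
\item Identify the explicit $R^\epsilon$ with the Feynman--Kac expectation, i.e.\ uniqueness for the heat equation with bounded potential in a class $|\theta|\le Ce^{C|x|}$.
\item Verify Varadhan's exponential-moment condition, since $\int_0^y u_0$ is only bounded below by $-\|u_0\|_\infty|y|$.
\item Make the discontinuity argument precise. Take $H=1_{(0,\infty)}$, so the functional is lower semicontinuous for the upper bound. For the lower bound, use continuity at paths spending zero time at $0$, and shift interface-sitting segments to $x=-\delta$.
\item Carry out the reduction of the path-space problem to $t+\min(U_R^1,U_R^2,U_R^3)$ (respectively $\min(U_L^1,U_L^2,U_L^3)$) by cutting paths at their first and last visits to $0$.
\end{itemize}
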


The proof of the above theorem is structured in multiple steps and expressed through a sequence of lemmas.
\begin{lemma} \label{rewriting R}
For all $(x,t)\in (0,\infty)\times (0, \infty),$ $R^{\epsilon}(x,t)$ can be split as follows:
{%
\setlength{\abovedisplayskip}{8pt}
\setlength{\abovedisplayshortskip}{8pt}
\begin{equation}\label{definition 9}
  \begin{aligned}
   &R^{\epsilon}(x,t)=e^{-\frac{t}{2 \epsilon}} [I_1^{\epsilon} (x,t) -I_2^{\epsilon}(x,t)+I_3^{\epsilon}(x,t) +I_4^{\epsilon}(x,t)+I_5^{\epsilon}(x,t)-I_6^{\epsilon}(x,t)-I_7^{\epsilon}(x,t)]\\
\text{where},~~
   &I_1^{\epsilon}(x,t)= \frac{1}{2\sqrt{\pi \epsilon t}} \int_0^\infty e^{-\frac{1}{2 \epsilon}[\frac{(x-\xi)^2}{2t}+\int_{0}^\xi u_0 (y) dy]} d \xi\\
   &I_2^{\epsilon} (x,t)= \frac{1}{2\sqrt{\pi \epsilon t}} \int_0^\infty e^{-\frac{1}{2 \epsilon}[\frac{(x+\xi)^2}{2t}+\int_{0}^\xi u_0 (y) dy]} d \xi\\
   &I_3^{\epsilon}(x,t)= g(0) \frac{x}{2 \sqrt{\pi \epsilon}}\int_0^t I_0(-\frac{\tau}{4\epsilon})\frac{e^{-\frac{1}{4 \epsilon}\Big[ \frac{x^2}{t-\tau}-\tau\Big]}}{(t-\tau)^{\frac{3}{2}}} \, \mathrm{d}\tau\\
   &I_4^{\epsilon}(x,t)=  \frac{x}{(4 \pi \epsilon)^{\frac{3}{2}}} \int_0^t \int_0^\tau \int_0^1 \int_{0}^{\infty}
      \frac{\xi  e^{-\frac{1}{2 \epsilon}\Big[ \frac{x^2}{2(t-\tau)}+\frac{\xi ^2}{2(\tau-u)}+(\theta-1) u+\int_{0}^\xi u_0 (y) dy \Big]}}{u^{\frac{1}{2}}(\tau-u)^{\frac{3}{2}}(t-\tau)^{\frac{3}{2}}}
     d \xi d\theta du d\tau\\
   &I_5^{\epsilon}(x,t)=\frac{x}{(4 \pi \epsilon)^{\frac{3}{2}}} \int_0^t \int_0^\tau \int_0^1 \int_{0}^{\infty}
      \frac{\xi e^{-\frac{1}{2 \epsilon}\Big[ \frac{x^2}{2(t-\tau)}+\frac{\xi ^2}{2(\tau-u)}+\theta u-\tau +\int_{0}^{-\xi} u_0 (y) dy \Big]}}{ u^{\frac{1}{2}} (\tau-u)^{\frac{3}{2}}(t-\tau)^{\frac{3}{2}}}
      d \xi d\theta du d\tau\\
   &I_6^{\epsilon}(x,t)= \frac{x g(0)}{4 \pi^{\frac{3}{2}} \epsilon^{\frac{1}{2}}}  \int_0^t \int_0^\tau \int_0^1  \frac{1}{ u^{\frac{1}{2}} (\tau-u)^{\frac{1}{2}}(t-\tau)^{\frac{3}{2}}}e^{-\frac{1}{2 \epsilon}\Big[ \frac{x^2}{2(t-\tau)}+(\theta-1) u\Big]} d\theta du d\tau\\
     &I_7^{\epsilon}(x,t)=\frac{x g(0)}{4 \pi^{\frac{3}{2}} \epsilon^{\frac{1}{2}}}  \int_0^t \int_0^\tau \int_0^1  \frac{1}{ u^{\frac{1}{2}} (\tau-u)^{\frac{1}{2}}(t-\tau)^{\frac{3}{2}}}e^{-\frac{1}{2 \epsilon}\Big[ \frac{x^2}{2(t-\tau)}-\tau +\theta u\Big]} d\theta du d\tau.\\
   \end{aligned}
   \end{equation}}
   \end{lemma}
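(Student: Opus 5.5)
The plan is to go back to the representation of $R^{\epsilon}$ obtained in the proof of the first theorem of Section 2, taken before the integration by parts: $R^{\epsilon}=e^{-\frac{t}{2\epsilon}}w^{\epsilon}$ with $w^{\epsilon}$ as in \eqref{definition 3}. We then substitute the explicit formula \eqref{definition 1} for $g$ and \eqref{definition 2} for $f$, and expand everything into a sum of iterated integrals. Each of these integrals has the shape $\int(\text{algebraic weight})\,e^{-\frac{1}{2\epsilon}[\cdots]}$, which is the form needed for a later Laplace-type argument.

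\textbf{Step 1 (the heat-kernel part).} Write $\theta^{\epsilon}_0(\xi)=e^{-\frac{1}{2\epsilon}\int_0^\xi u_0}$ and combine it with $e^{-\frac{(x\mp\xi)^2}{4\epsilon t}}=e^{-\frac{1}{2\epsilon}\frac{(x\mp\xi)^2}{2t}}$. The first term of \eqref{definition 3} then becomes $I_1^{\epsilon}-I_2^{\epsilon}$ directly.

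\textbf{Step 2 (the boundary term).} The second term of \eqref{definition 3} is
\[
\frac{x}{2\sqrt{\pi\epsilon}}\int_0^t e^{-\frac{x^2}{4\epsilon(t-\tau)}}\frac{e^{\frac{\tau}{2\epsilon}}g(\tau)}{(t-\tau)^{3/2}}\,d\tau .
\]
We insert $g(\tau)=g(0)e^{-\frac{\tau}{4\epsilon}}I_0(-\frac{\tau}{4\epsilon})+\frac{\epsilon^{3/2}}{\pi}\int_0^\tau\frac{1-e^{-\frac{u}{2\epsilon}}}{u^{3/2}}f(\tau-u)\,du$.
\begin{itemize}
\item For the Bessel part, $e^{\frac{\tau}{2\epsilon}}e^{-\frac{\tau}{4\epsilon}}=e^{\frac{\tau}{4\epsilon}}$, which gives exactly $I_3^{\epsilon}$.
\item For the convolution part, the key manipulation is the identity
\[
1-e^{-\frac{u}{2\epsilon}}=\frac{u}{2\epsilon}\int_0^1 e^{-\frac{\theta u}{2\epsilon}}\,d\theta .
\]
This removes the difference of exponentials, turns $u^{-3/2}$ into $u^{-1/2}$, and introduces the auxiliary variable $\theta$.
\end{itemize}

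\textbf{Step 3 (expanding $f(\tau-u)$).} Using \eqref{definition 2}, $f(\tau-u)$ splits into four pieces.
\begin{itemize}
\item The piece containing $e^{-\frac{\tau-u}{2\epsilon}}\theta^{\epsilon}_0(\xi)$ collects the exponent
\[
\frac{\tau}{2\epsilon}-\frac{\theta u}{2\epsilon}-\frac{\tau-u}{2\epsilon}=-\frac{(\theta-1)u}{2\epsilon}
\]
and gives $I_4^{\epsilon}$.
\item The piece containing $\theta^{\epsilon}_0(-\xi)$ collects $-\frac{1}{2\epsilon}[\theta u-\tau]$, where $\int_0^{-\xi}u_0$ comes from $\theta_0^\epsilon(-\xi)$. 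This gives $I_5^{\epsilon}$.
\item The two $g(0)$ pieces $-\frac{g(0)}{\sqrt{\epsilon(\tau-u)}}\big(e^{-\frac{\tau-u}{2\epsilon}}+1\big)$ give $-I_6^{\epsilon}$ and $-I_7^{\epsilon}$, with the same exponent bookkeeping.
\end{itemize}
Throughout, $\frac{\xi^2}{4\epsilon(\tau-u)}=\frac{1}{2\epsilon}\frac{\xi^2}{2(\tau-u)}$.

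\textbf{Step 4 (constants and interchange of integrals).} Tracking the constants should reproduce the stated prefactors:
\[
\frac{x}{2\sqrt{\pi\epsilon}}\cdot\frac{\epsilon^{3/2}}{\pi}\cdot\frac{1}{2\epsilon}\cdot\frac{1}{2\epsilon^{3/2}}=\frac{x}{(4\pi\epsilon)^{3/2}}
\qquad\text{for } I_4^{\epsilon},I_5^{\epsilon},
\]
and
\[
\frac{x}{2\sqrt{\pi\epsilon}}\cdot\frac{\epsilon^{3/2}}{\pi}\cdot\frac{1}{2\epsilon}\cdot\frac{1}{\sqrt\epsilon}=\frac{x}{4\pi^{3/2}\epsilon^{1/2}}
\qquad\text{for } I_6^{\epsilon},I_7^{\epsilon}.
\]
I expect the main obstacle to be justifying the interchange of the $\tau$, $u$, $\theta$ and $\xi$ integrations, that is, Fubini. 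The proof of the first theorem is silent on this, and the singular weights $(t-\tau)^{-3/2}$ and $u^{-1/2}(\tau-u)^{-3/2}$ must be controlled. For $x>0$ the factor $e^{-\frac{x^2}{4\epsilon(t-\tau)}}$ kills the singularity at $\tau=t$, and $u^{-1/2}$ is integrable at $u=0$. Near $u=\tau$, in $I_4^{\epsilon}$ and $I_5^{\epsilon}$ we use $\int_0^\infty \xi e^{-\frac{\xi^2}{4\epsilon(\tau-u)}}\,d\xi=2\epsilon(\tau-u)$ together with the bound $\theta^\epsilon_0(\pm\xi)\le e^{\|u_0\|_\infty\xi/2\epsilon}$, which reduces the weight to an integrable $(\tau-u)^{-1/2}$. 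In $I_6^{\epsilon}$ and $I_7^{\epsilon}$ the weight is already $(\tau-u)^{-1/2}$. All integrands are therefore absolutely integrable for fixed $x>0$, $t>0$, and Tonelli (applied termwise to positive pieces) followed by Fubini justifies the rearrangement.
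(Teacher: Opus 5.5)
Your proposal is correct and follows the paper's route. The paper likewise reverts to the pre-integration-by-parts form \eqref{expression for R} of $R^{\epsilon}$ and reads off $I_1^{\epsilon}-I_2^{\epsilon}$. It then gets $I_3^{\epsilon}$ from the Bessel part of $g$, and uses $1-e^{-u/2\epsilon}=\frac{u}{2\epsilon}\int_0^1 e^{-\theta u/2\epsilon}\,d\theta$ to split the convolution part into $I_4^{\epsilon}+I_5^{\epsilon}-I_6^{\epsilon}-I_7^{\epsilon}$, with the same exponent and constant bookkeeping you give.

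Your Fubini/Tonelli justification is something the paper leaves implicit. One small correction: with the growth bound on $\theta^{\epsilon}_0(\pm\xi)$, the $\xi$-integral is not exactly $2\epsilon(\tau-u)$. Substituting $\xi=\sqrt{\tau-u}\,\eta$ shows it is only $O(\tau-u)$ for $\tau-u\le t$, and that is still enough for integrability.
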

\begin{proof}
  Using integration by parts, the expression for $R^{\epsilon}(x,t)$ can be simplified to 
  \begin{equation}
\label{expression for R}
    \begin{aligned}
        R^{\epsilon}(x,t)&=e^{-\frac{t}{2\epsilon}}\Big[\frac{1}{2\sqrt{\pi \epsilon t}}\int_0^\infty \Big\{e^{-\frac{(x-\xi)^2}{4\epsilon t}}-e^{-\frac{(x+\xi)^2}{4\epsilon t}}\Big\}\theta^{\epsilon}_0(\xi)d\xi\\
        &+ \frac{x}{2 \sqrt{\pi \epsilon}}\int_0^t \frac{g(\tau)}{(t-\tau)^{\frac{3}{2}}}e^{-\frac{1}{2 \epsilon}\Big[ \frac{x^2}{2(t-\tau)}-\tau\Big]} d\tau\Big]\\
        &=e^{-\frac{t}{2\epsilon}}[R_1^{\epsilon}(x,t) +R_2^{\epsilon}(x,t)].
        \end{aligned}
        \end{equation}
$R_1^{\epsilon}(x,t)$ can be rewritten as
\begin{equation}\label{expression for R1}
    \begin{aligned}
     R_1^{\epsilon}(x,t)&=  \frac{1}{2\sqrt{\pi \epsilon t}} \int_0^\infty \Big\{e^{-\frac{(x-\xi)^2}{4\epsilon t}}-e^{-\frac{(x+\xi)^2}{4\epsilon t}}\Big\}\theta^{\epsilon}_0(\xi)d\xi\\
     &= \frac{1}{2\sqrt{\pi \epsilon t}} \int_0^\infty e^{-\frac{1}{2 \epsilon}[\frac{(x-\xi)^2}{2t}+\int_{0}^\xi u_0 (y) dy]} d \xi-\frac{1}{2\sqrt{\pi \epsilon t}} \int_0^\infty e^{-\frac{1}{2 \epsilon}[\frac{(x+\xi)^2}{2t}+\int_{0}^\xi u_0 (y) dy]} d \xi\\
     &=I_1^{\epsilon}(x,t) -I_2^{\epsilon}(x,t).
    \end{aligned}
\end{equation}
Putting the value of $g$, we simplify $R_2^{\epsilon}(x,t)$ as follows:
\begin{equation}\label{expression for R2}
    \begin{aligned}
    R_2^{\epsilon}(x,t)&=   \frac{x}{2 \sqrt{\pi \epsilon}}\int_0^t \frac{g(\tau)}{(t-\tau)^{\frac{3}{2}}}e^{-\frac{1}{2 \epsilon}\Big[ \frac{x^2}{2(t-\tau)}-\tau\Big]} d\tau\\
    &=  \frac{x}{2 \sqrt{\pi \epsilon}}\int_0^t \frac{e^{-\frac{1}{2 \epsilon}\Big[ \frac{x^2}{2(t-\tau)}-\tau\Big]}}{(t-\tau)^{\frac{3}{2}}}
    \Big[g(0)e^{-\frac{\tau}{4\epsilon}}I_0(-\frac{\tau}{4\epsilon})+\frac{\epsilon^{\frac{3}{2}}}{\pi}\int_0^\tau\frac{1-e^{-\frac{u}{2\epsilon}}}{u^{\frac{3}{2}}}f(\tau-u)du \Big] d\tau \\
   &= g(0) \frac{x}{2 \sqrt{\pi \epsilon}}\int_0^t I_0(-\frac{\tau}{4\epsilon})\frac{e^{-\frac{1}{4 \epsilon}\Big[ \frac{x^2}{t-\tau}-\tau\Big]}}{(t-\tau)^{\frac{3}{2}}} \, \mathrm{d}\tau\\& +  
     \frac{x}{2 \sqrt{\pi \epsilon}}\frac{\epsilon^{\frac{3}{2}}}{\pi}\int_0^t \frac{e^{-\frac{1}{2 \epsilon}\Big[ \frac{x^2}{2(t-\tau)}-\tau\Big]}}{(t-\tau)^{\frac{3}{2}}} \int_0^\tau\frac{1-e^{-\frac{u}{2\epsilon}}}{u^{\frac{3}{2}}}f(\tau-u) \mathrm{d}u \mathrm{d}\tau\\
    &= I_3^{\epsilon}(x,t) + \tilde{R}_2^{\epsilon}(x,t).\\
   \text{where,}~&\tilde{R}_2^{\epsilon}(x,t)=\frac{x}{2 \sqrt{\pi \epsilon}}\frac{\epsilon^{\frac{3}{2}}}{\pi}\int_0^t \frac{e^{-\frac{1}{2 \epsilon}\Big[ \frac{x^2}{2(t-\tau)}-\tau\Big]}}{(t-\tau)^{\frac{3}{2}}}\int_0^\tau\frac{1-e^{-\frac{u}{2\epsilon}}}{u^{\frac{3}{2}}} \times\\&\Bigg[ \frac{1}{2(\epsilon (\tau-u))^{\frac{3}{2}}}\int_0^{\infty}\Big[e^{-\frac{(\tau-u)}{2\epsilon}}\theta^{\epsilon}_0(\xi)+\theta^{\epsilon}_0(-\xi)\Big]\xi e^{-\frac{\xi^2}{4\epsilon (\tau-u)}}d\xi-\frac{g(0)}{\sqrt{\epsilon (\tau-u)}}\Big(e^{-\frac{(\tau-u)}{2\epsilon}}+1\Big)\Bigg]\mathrm{d}u \mathrm{d}\tau\\
    &=\frac{x}{2 \sqrt{\pi \epsilon}}\frac{\epsilon^{\frac{3}{2}}}{\pi}\int_0^t \frac{e^{-\frac{1}{2 \epsilon}\Big[ \frac{x^2}{2(t-\tau)}-\tau\Big]}}{(t-\tau)^{\frac{3}{2}}} \int_0^\tau\frac{1}{2 \epsilon u^{\frac{1}{2}}} \int_{0}^1 e^{- \frac{\theta u}{2 \epsilon}} d \theta\times\\
      & \Bigg[ \frac{1}{2(\epsilon (\tau-u))^{\frac{3}{2}}}\int_0^{\infty}\Big[e^{-\frac{(\tau-u)}{2\epsilon}}\theta^{\epsilon}_0(\xi)+\theta^{\epsilon}_0(-\xi)\Big]\xi e^{-\frac{\xi^2}{4\epsilon (\tau-u)}}d\xi-\frac{g(0)}{\sqrt{\epsilon (\tau-u)}}\Big(e^{-\frac{(\tau-u)}{2\epsilon}}+1\Big)\Bigg]\mathrm{d}u \mathrm{d}\tau\\
      &= \frac{x}{(4 \pi \epsilon)^{\frac{3}{2}}} \int_0^t \int_0^\tau \int_0^1 \int_{0}^{\infty}
      \frac{\xi}{u^{\frac{1}{2}}(\tau-u)^{\frac{3}{2}}(t-\tau)^{\frac{3}{2}}}
      e^{-\frac{1}{2 \epsilon}\Big[ \frac{x^2}{2(t-\tau)}+\frac{\xi ^2}{2(\tau-u)}+(\theta-1) u+\int_{0}^\xi u_0 (y) dy \Big]} d \xi d\theta du d\tau\\
      & +\frac{x}{(4 \pi \epsilon)^{\frac{3}{2}}} \int_0^t \int_0^\tau \int_0^1 \int_{0}^{\infty}
      \frac{\xi}{ u^{\frac{1}{2}} (\tau-u)^{\frac{3}{2}}(t-\tau)^{\frac{3}{2}}}
      e^{-\frac{1}{2 \epsilon}\Big[ \frac{x^2}{2(t-\tau)}+\frac{\xi ^2}{2(\tau-u)}+\theta u-\tau +\int_{0}^{-\xi} u_0 (y) dy \Big]} d \xi d\theta du d\tau\\
     & -\frac{x g(0)}{4 \pi^{\frac{3}{2}} \epsilon^{\frac{1}{2}}}  \int_0^t \int_0^\tau \int_0^1  \frac{1}{ u^{\frac{1}{2}} (\tau-u)^{\frac{1}{2}}(t-\tau)^{\frac{3}{2}}}e^{-\frac{1}{2 \epsilon}\Big[ \frac{x^2}{2(t-\tau)}+(\theta-1) u\Big]} d\theta du d\tau\\
     &-\frac{x g(0)}{4 \pi^{\frac{3}{2}} \epsilon^{\frac{1}{2}}}  \int_0^t \int_0^\tau \int_0^1  \frac{1}{ u^{\frac{1}{2}} (\tau-u)^{\frac{1}{2}}(t-\tau)^{\frac{3}{2}}}e^{-\frac{1}{2 \epsilon}\Big[ \frac{x^2}{2(t-\tau)}-\tau +\theta u\Big]} d\theta du d\tau\\
     &=I_4^{\epsilon}(x,t)+I_5^{\epsilon}(x,t)-I_6^{\epsilon}(x,t)-I_7^{\epsilon}(x,t).
    \end{aligned}
\end{equation}
Therefore, combining  \eqref{expression for R}, \eqref{expression for R1},\eqref{expression for R2}, we get \eqref{definition 9}. Hence, the lemma follows.
\end{proof}
Similarly, we can prove the following lemma for $L^{\epsilon}(x,t).$
   \begin{lemma}\label{rewriting L}
   For all $(x,t)\in (-\infty,0)\times (0, \infty),$ $L^{\epsilon}(x,t)$ can be split as follows:
   \begin{equation}\label{expression for L}
   \begin{aligned}
     &L^{\epsilon}(x,t)=-J_1^{\epsilon} (x,t) +J_2^{\epsilon}(x,t)+J_3^{\epsilon}(x,t) +J_4^{\epsilon}(x,t)+J_5^{\epsilon}(x,t)-J_6^{\epsilon}(x,t)-J_7^{\epsilon}(x,t),\\
\text{where},~~
   &J_1^{\epsilon}(x,t)= \frac{1}{2\sqrt{\pi \epsilon t}} \int_0^\infty e^{-\frac{1}{2 \epsilon}[\frac{(x-\xi)^2}{2t}+\int_{0}^{-\xi}u_0 (y) dy]} d \xi\\
   &J_2^{\epsilon} (x,t)= \frac{1}{2\sqrt{\pi \epsilon t}} \int_0^\infty e^{-\frac{1}{2 \epsilon}[\frac{(x+\xi)^2}{2t}+\int_{0}^{-\xi} u_0 (y) dy]} d \xi\\
   &J_3^{\epsilon}(x,t)= -g(0) \frac{x}{2 \sqrt{\pi \epsilon}}\int_0^t I_0(-\frac{\tau}{4\epsilon})\frac{e^{-\frac{1}{4 \epsilon}\Big[ \frac{x^2}{t-\tau}+\tau\Big]}}{(t-\tau)^{\frac{3}{2}}} \, \mathrm{d}\tau\\
   &J_4^{\epsilon}(x,t)=  -\frac{x}{(4 \pi \epsilon)^{\frac{3}{2}}} \int_0^t \int_0^\tau \int_0^1 \int_{0}^{\infty}
      \frac{\xi e^{-\frac{1}{2 \epsilon}\Big[ \frac{x^2}{2(t-\tau)}+\frac{\xi ^2}{2(\tau-u)}+(\theta-1) u+\tau+\int_{0}^\xi u_0 (y) dy \Big]}}{u^{\frac{1}{2}}(\tau-u)^{\frac{3}{2}}(t-\tau)^{\frac{3}{2}}}
      d \xi d\theta du d\tau\\
   &J_5^{\epsilon}(x,t)=-\frac{x}{(4 \pi \epsilon)^{\frac{3}{2}}} \int_0^t \int_0^\tau \int_0^1 \int_{0}^{\infty}
      \frac{\xi e^{-\frac{1}{2 \epsilon}\Big[ \frac{x^2}{2(t-\tau)}+\frac{\xi ^2}{2(\tau-u)}+\theta u +\int_{0}^{-\xi} u_0 (y) dy \Big]}}{ u^{\frac{1}{2}} (\tau-u)^{\frac{3}{2}}(t-\tau)^{\frac{3}{2}}}
      d \xi d\theta du d\tau\\
   &J_6^{\epsilon}(x,t)= -\frac{x g(0)}{4 \pi^{\frac{3}{2}} \epsilon^{\frac{1}{2}}}  \int_0^t \int_0^\tau \int_0^1  \frac{1}{ u^{\frac{1}{2}} (\tau-u)^{\frac{1}{2}}(t-\tau)^{\frac{3}{2}}}e^{-\frac{1}{2 \epsilon}\Big[ \frac{x^2}{2(t-\tau)}+(\theta-1) u+\tau\Big]} d\theta du d\tau\\
     &J_7^{\epsilon}(x,t)=-\frac{x g(0)}{4 \pi^{\frac{3}{2}} \epsilon^{\frac{1}{2}}}  \int_0^t \int_0^\tau \int_0^1  \frac{1}{ u^{\frac{1}{2}} (\tau-u)^{\frac{1}{2}}(t-\tau)^{\frac{3}{2}}}e^{-\frac{1}{2 \epsilon}\Big[ \frac{x^2}{2(t-\tau)} +\theta u\Big]} d\theta du d\tau.
   \end{aligned}
   \end{equation}
\end{lemma}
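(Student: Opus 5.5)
The plan is to mirror the proof of Lemma \ref{rewriting R}, starting from the closed form \eqref{9} for $L^{\epsilon}$ in Theorem 2.1. First, I would undo the integration by parts performed there, i.e.\ use the identity derived just before \eqref{9}. This turns the $g'$-term and the $g(0)\,erfc$-term back into the single kernel term from \eqref{definition 4}, so that
\begin{equation*}
L^{\epsilon}(x,t)=L_1^{\epsilon}(x,t)+L_2^{\epsilon}(x,t),\qquad L_2^{\epsilon}(x,t)=-\frac{x}{2\sqrt{\pi\epsilon}}\int_0^t \frac{g(\tau)}{(t-\tau)^{\frac32}}e^{-\frac{x^2}{4\epsilon(t-\tau)}}\,d\tau .
\end{equation*}
The term $L_1^{\epsilon}$ is the initial-data term. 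Substituting $\theta^{\epsilon}_0(-\xi)=e^{-\frac{1}{2\epsilon}\int_0^{-\xi}u_0}$ and splitting the difference of Gaussians gives directly $L_1^{\epsilon}=-J_1^{\epsilon}+J_2^{\epsilon}$, with the exponents combined as $\frac{(x\mp\xi)^2}{2t}+\int_0^{-\xi}u_0$.

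Next I would insert the formula \eqref{definition 1} for $g$ into $L_2^{\epsilon}$. The Bessel part $g(0)e^{-\tau/4\epsilon}I_0(-\tau/4\epsilon)$ contributes exactly $J_3^{\epsilon}$, because $\frac{x^2}{4\epsilon(t-\tau)}+\frac{\tau}{4\epsilon}=\frac{1}{4\epsilon}\big[\frac{x^2}{t-\tau}+\tau\big]$. Note that in the $R$ case the factor $e^{\tau/2\epsilon}$ flipped the sign of $\tau$; here there is no such factor, which is why $+\tau$ appears. For the convolution part I would use the same device as in Lemma \ref{rewriting R}:
\begin{equation*}
\frac{1-e^{-u/2\epsilon}}{u^{3/2}}=\frac{1}{2\epsilon u^{1/2}}\int_0^1 e^{-\theta u/2\epsilon}\,d\theta .
\end{equation*}
I would then expand $f(\tau-u)$ from \eqref{definition 2} into its four pieces. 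These are the $e^{-(\tau-u)/2\epsilon}\theta_0^{\epsilon}(\xi)$ piece, the $\theta^{\epsilon}_0(-\xi)$ piece, and the two $g(0)$ pieces.

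Collecting exponents produces the four remaining terms.
\begin{itemize}
\item The first piece gives the exponent $\frac{x^2}{2(t-\tau)}+\frac{\xi^2}{2(\tau-u)}+\theta u+(\tau-u)+\int_0^\xi u_0$, which is $(\theta-1)u+\tau+\cdots$, i.e.\ $J_4^{\epsilon}$.
\item The second piece gives $\theta u+\int_0^{-\xi}u_0$, i.e.\ $J_5^{\epsilon}$.
\item The two $g(0)$ pieces give $J_6^{\epsilon}$ and $J_7^{\epsilon}$. Their exponents are $(\theta-1)u+\tau$ and $\theta u$ respectively.
\end{itemize}
The constants are checked as in the $R$ case: $\frac{1}{2\sqrt{\pi\epsilon}}\cdot\frac{\epsilon^{3/2}}{\pi}\cdot\frac{1}{2\epsilon}\cdot\frac{1}{2\epsilon^{3/2}}=\frac{1}{(4\pi\epsilon)^{3/2}}$ for the $\xi$-terms, and $\frac{1}{4\pi^{3/2}\epsilon^{1/2}}$ for the $g(0)$-terms. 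The overall minus sign of $L_2^{\epsilon}$ is carried into each $J_i^{\epsilon}$, $i\ge 3$, which is why those are defined with a leading minus.

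The only real obstacle is bookkeeping. One must track the signs, namely the minus in front of $L_2^{\epsilon}$ and the minus on the $g(0)$ part of $f$, which yields $-J_6^{\epsilon}-J_7^{\epsilon}$. One must also justify the Fubini interchanges in the iterated integrals. For the interchanges, all integrands except the $g(0)$ subtractions are nonnegative. The singularities $u^{-1/2}$ and $(\tau-u)^{-1/2}$ are integrable, while $(\tau-u)^{-3/2}\xi e^{-\xi^2/4\epsilon(\tau-u)}$ integrates in $\xi$ to $O((\tau-u)^{-1/2})$ under the boundedness of $u_0$, and $(t-\tau)^{-3/2}$ is tamed by $e^{-x^2/4\epsilon(t-\tau)}$ since $x\neq 0$. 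So Tonelli applies term by term, and the lemma follows.
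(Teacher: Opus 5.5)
Your proposal is correct and follows the paper's route: the paper proves this lemma only by saying ``similarly'' to Lemma \ref{rewriting R}, and you carry out that same argument. You undo the integration by parts to recover the double-layer kernel, split the initial-data term into $-J_1^{\epsilon}+J_2^{\epsilon}$, substitute $g$, use the $\theta$-integral representation of $(1-e^{-u/2\epsilon})/u^{3/2}$, and expand $f(\tau-u)$. Your sign and constant bookkeeping checks out, namely the $+\tau$ coming from the absence of the factor $e^{\tau/2\epsilon}$ and the $-J_6^{\epsilon}-J_7^{\epsilon}$ coming from the two minus signs. Your term-by-term Tonelli remark also supplies the interchange justification that the paper leaves implicit.
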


\begin{lemma}\label{nonnegativity}
    Let $I_i^{\epsilon}(x,t), J_i^{\epsilon}(x,t), i=1,2,\cdots,7$, be defined as in the lemma \eqref{rewriting R} and \eqref{rewriting L}. Then \begin{equation*}
        \begin{aligned}
            &I_1^{\epsilon}(x,t)-I_2^{\epsilon}(x,t)\geq 0,\\&
            I_3^{\epsilon}(x,t)-I_6^{\epsilon}(x,t)-I_7^{\epsilon}(x,t)\geq 0, ~~~\forall x,t>0
        \end{aligned}
    \end{equation*}
    and \begin{equation*}
        \begin{aligned}
            &-J_1^{\epsilon}(x,t)+J_2^{\epsilon}(x,t)\geq 0,\\&
            J_3^{\epsilon}(x,t)-J_6^{\epsilon}(x,t)-J_7^{\epsilon}(x,t)\geq 0, ~~~\forall x<0,t>0.
        \end{aligned}
    \end{equation*}
\end{lemma}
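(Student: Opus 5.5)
The plan is to treat the two kinds of inequalities separately. The first kind follows from a pointwise comparison of Gaussian kernels. The second kind I expect to follow from the fact that the $g(0)$-contributions cancel exactly.

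\emph{The terms $I_1^{\epsilon}-I_2^{\epsilon}$ and $-J_1^{\epsilon}+J_2^{\epsilon}$.} By \eqref{expression for R1}, $I_1^{\epsilon}-I_2^{\epsilon}=R_1^{\epsilon}$, which can be written as
\begin{equation*}
R_1^{\epsilon}(x,t)=\frac{1}{2\sqrt{\pi\epsilon t}}\int_0^\infty\Big\{e^{-\frac{(x-\xi)^2}{4\epsilon t}}-e^{-\frac{(x+\xi)^2}{4\epsilon t}}\Big\}\theta_0^{\epsilon}(\xi)\,d\xi .
\end{equation*}
Here $\theta^{\epsilon}_0>0$. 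For $x>0$ and $\xi\ge 0$ we have $(x+\xi)^2-(x-\xi)^2=4x\xi\ge 0$, so the bracket is nonnegative pointwise, and the integral is nonnegative.

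For $x<0$, the quantity $-J_1^{\epsilon}+J_2^{\epsilon}$ is the analogous integral with the two exponentials interchanged and $\theta^{\epsilon}_0(-\xi)$ in place of $\theta^{\epsilon}_0(\xi)$. Since $4x\xi\le 0$, we have $(x+\xi)^2\le(x-\xi)^2$, so again the integrand is nonnegative.

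\emph{The terms $I_3^{\epsilon}-I_6^{\epsilon}-I_7^{\epsilon}$ and $J_3^{\epsilon}-J_6^{\epsilon}-J_7^{\epsilon}$.} Trace where these terms come from in \eqref{expression for R2}. They are exactly the part of $R_2^{\epsilon}$ produced by the $g(0)$-dependent pieces of $g$: the Bessel term, plus the convolution of the kernel with the piece $-\frac{g(0)}{\sqrt{\epsilon t}}(e^{-t/2\epsilon}+1)$ of $f$. Hence
\begin{equation*}
I_3^{\epsilon}-I_6^{\epsilon}-I_7^{\epsilon}=\frac{x}{2\sqrt{\pi\epsilon}}\int_0^t\frac{h(\tau)}{(t-\tau)^{3/2}}\,e^{-\frac{1}{2\epsilon}\big[\frac{x^2}{2(t-\tau)}-\tau\big]}\,d\tau ,
\end{equation*}
where $h$ denotes the $g(0)$-part of $g$. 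So it suffices to show $h\ge 0$.

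I would compute $h$ through its Laplace transform, reading it off from the formula for $G(s)$ in the proof of the first theorem of Section 2. Set $a=\frac{1}{2\epsilon}$. The $g(0)$-part of $f$ is $f_0(t)=-\frac{g(0)}{\sqrt{\epsilon t}}(1+e^{-at})$, and its transform is
\begin{equation*}
L\{f_0\}=-g(0)\sqrt{\pi/\epsilon}\,\Big(\frac{1}{\sqrt s}+\frac{1}{\sqrt{s+a}}\Big).
\end{equation*}
Substituting this into the formula for $G(s)$ gives
\begin{equation*}
L\{h\}(s)=g(0)\Big[\frac{1}{\sqrt{s(s+a)}}-\frac{\frac{1}{\sqrt s}+\frac{1}{\sqrt{s+a}}}{\sqrt s+\sqrt{s+a}}\Big]=0,
\end{equation*}
because $\frac{1}{\sqrt s}+\frac{1}{\sqrt{s+a}}=\frac{\sqrt s+\sqrt{s+a}}{\sqrt{s(s+a)}}$. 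Therefore $h\equiv 0$, and $I_3^{\epsilon}-I_6^{\epsilon}-I_7^{\epsilon}=0\ge 0$.

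For $L^{\epsilon}$ the same identity applies. Indeed $J_3^{\epsilon}$, $J_6^{\epsilon}$ and $J_7^{\epsilon}$ are the same $g(0)$-part of $g$ integrated against the kernel $-\frac{x}{2\sqrt{\pi\epsilon}}(t-\tau)^{-3/2}e^{-x^2/4\epsilon(t-\tau)}$, so their combination also vanishes.

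\emph{Main obstacle.} The delicate step is the bookkeeping. I need to confirm that the $\theta$-integrals $\int_0^1 e^{-(\theta-1)u/2\epsilon}\,d\theta$ and $\int_0^1 e^{-\theta u/2\epsilon}\,d\theta$ in $I_6^{\epsilon},I_7^{\epsilon}$ (and in $J_6^{\epsilon},J_7^{\epsilon}$) reproduce the $(1-e^{-u/2\epsilon})u^{-3/2}$ kernel with the correct signs and exponential weights. A sign or weight mismatch would leave a nonzero remainder.

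As a safeguard I would check the identity directly in the time domain. Let $b=\frac{\tau}{4\epsilon}$. After carrying out the $\theta$-integration, the inner $u$-integral of $I_6^{\epsilon}+I_7^{\epsilon}$ should equal $g(0)\,e^{b}I_0(b)\cdot\frac{x}{2\sqrt{\pi\epsilon}}$ for each fixed $\tau$, with $I_0$ even. This reduces to a Beta-type integral identity, $\int_0^\tau u^{-1/2}(\tau-u)^{-1/2}e^{\pm u/2\epsilon}\,du=\pi e^{\pm b}I_0(b)$, which is the standard integral representation of $I_0$.

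If the cancellation turned out to be only partial, the fallback is to show $h\ge 0$ directly. The idea would be to write $h$ as a positive combination of such Bessel-type integrals.
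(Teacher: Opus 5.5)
Your proof is correct. The first pair of inequalities is handled exactly as in the paper, by pointwise kernel comparison via $(x+\xi)^2-(x-\xi)^2=4x\xi$. For $I_3^{\epsilon}-I_6^{\epsilon}-I_7^{\epsilon}$ and $J_3^{\epsilon}-J_6^{\epsilon}-J_7^{\epsilon}$ you take a genuinely different route.

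Both you and the paper reduce these to the coefficient $G^{\epsilon}(t)$ of $g(0)$ in $g$, integrated against a kernel of fixed sign. The paper then proves only $G^{\epsilon}\ge 0$, and does so indirectly. Since $G^{\epsilon}$ does not depend on $u_0$, it takes $u_0=M$ on $x>0$ and $u_0=-M$ on $x<0$ with $M$ large. This makes the $u_0$-dependent part of $g$ negligible, and the paper then invokes the positivity of $g$ proved by Chung et al.\ to rule out $G^{\epsilon}(t)<0$.

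You instead compute, from the paper's own formula for $G(s)$, that the $g(0)$-part has transform $\frac{1}{\sqrt{s(s+a)}}-\frac{1}{\sqrt{s(s+a)}}=0$. The transform of $f_0$ and the $\theta$-bookkeeping in $I_6^{\epsilon},I_7^{\epsilon},J_6^{\epsilon},J_7^{\epsilon}$ are right: the signs and exponential weights match. So $G^{\epsilon}\equiv 0$, and both combinations vanish identically.

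Your route has several advantages over the paper's:
\begin{itemize}
\item It is self-contained: no external positivity theorem and no unquantified ``$M$ very large'' step.
\item It gives an equality rather than an inequality.
\item It explains why the cancellation happens. The $g(0)$ terms are only boundary terms of the integrations by parts, since $g(0)t^{-1/2}+\int_0^t g'(\tau)(t-\tau)^{-1/2}\,d\tau=\frac{d}{dt}\int_0^t g(\tau)(t-\tau)^{-1/2}\,d\tau$.
\item It shows that $I_3^{\epsilon},I_6^{\epsilon},I_7^{\epsilon}$ can simply be discarded. With them goes the extra candidate $\min_{\tau}\big[\frac{x^2}{2(t-\tau)}-\tau\big]$ in Step 1 of the proof of the convergence theorem.
\end{itemize}
The paper's route avoids any transform computation, but it yields only the sign.

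Two minor points. First, to pass from $L\{h\}=0$ to $h=0$, cite Lerch's theorem: $G^{\epsilon}$ is locally integrable and of exponential order. Since $h$ enters only under a $d\tau$-integral, $h=0$ a.e.\ suffices.

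Second, your time-domain safeguard does not reduce to the single identity $\int_0^\tau u^{-1/2}(\tau-u)^{-1/2}e^{\pm u/2\epsilon}\,du=\pi e^{\pm b}I_0(b)$. Write $(e^{cu}-1)/u$ and $(1-e^{-cu})/u$ as $\theta$-integrals with $c=\frac{1}{2\epsilon}$. That identity then only converts the claim into $\int_0^1\cosh\big((1-\theta)b\big)I_0(\theta b)\,d\theta=I_0(b)$, that is, $\int_0^b\cosh(b-s)I_0(s)\,ds=b\,I_0(b)$. This still needs its own check, for instance by a Laplace transform in $b$. Since your main argument is already complete, the safeguard is optional.
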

\begin{proof}
    {\bf Step-1:}For all $x,\xi>0$, $(x-\xi)^2\leq (x+\xi)^2.$ So we have the following.
    \begin{equation*}
         e^{-\frac{1}{2 \epsilon}[\frac{(x-\xi)^2}{2t}+\int_{0}^\xi u_0 (y) dy]} \geq e^{-\frac{1}{2 \epsilon}[\frac{(x+\xi)^2}{2t}+\int_{0}^\xi u_0 (y) dy]}.
    \end{equation*} This immediately follows that $I_1^{\epsilon}(x,t)-I_2^{\epsilon}(x,t)\geq 0, ~~~\forall x,t>0.$

    \noindent
    {\bf Step-2:} From \eqref{expression for R2}, we can write 
    \begin{equation}\label{definition 11}
        \begin{aligned}
             &I_3^{\epsilon}(x,t)-I_6^{\epsilon}(x,t)-I_7^{\epsilon}(x,t)\\=
             &g(0)\Bigg[ \frac{x}{2 \sqrt{\pi \epsilon}}\int_0^t I_0(-\frac{\tau}{4\epsilon})\frac{e^{-\frac{1}{4 \epsilon}\Big[ \frac{x^2}{t-\tau}-\tau\Big]}}{(t-\tau)^{\frac{3}{2}}} \, \mathrm{d}\tau\\&-\frac{x}{4 \pi^{\frac{3}{2}} \epsilon^{\frac{1}{2}}}  \int_0^t \int_0^\tau \int_0^1  \frac{1}{ u^{\frac{1}{2}} (\tau-u)^{\frac{1}{2}}(t-\tau)^{\frac{3}{2}}}e^{-\frac{1}{2 \epsilon}\Big[ \frac{x^2}{2(t-\tau)}+(\theta-1) u\Big]} d\theta du d\tau\\&-\frac{x}{4 \pi^{\frac{3}{2}} \epsilon^{\frac{1}{2}}}  \int_0^t \int_0^\tau \int_0^1  \frac{1}{ u^{\frac{1}{2}} (\tau-u)^{\frac{1}{2}}(t-\tau)^{\frac{3}{2}}}e^{-\frac{1}{2 \epsilon}\Big[ \frac{x^2}{2(t-\tau)}-\tau +\theta u\Big]} d\theta du d\tau\Bigg]\\=
             &g(0)F^{\epsilon}(x,t).
        \end{aligned}
    \end{equation}
    So from \eqref{expression for R} we have the coefficient of $g(0)$ in $R^{\epsilon}(x,t)$ is $F^{\epsilon}(x,t)$.
    Now from \eqref{transformation 1} and \eqref{definition 3}, we have the sign of the coefficient of $g(0)$ in $R^{\epsilon}(x,t)$ depends only on the sign of the coefficient of $g(0)$ in $g$.
 Now from \eqref{definition 1} and \eqref{definition 2} we get \begin{equation*}
 \begin{aligned}
     g(t)=&\frac{\epsilon^{\frac{3}{2}}}{\pi}\int_0^t\frac{1-e^{-\frac{\tau}{2\epsilon}}}{\tau^{\frac{3}{2}}}\Big[\frac{1}{2(\epsilon (t-\tau))^{\frac{3}{2}}}\int_0^{\infty}\Big[e^{-\frac{t-\tau}{2\epsilon}}\theta^{\epsilon}_0(\xi)+\theta^{\epsilon}_0(-\xi)\Big]\xi e^{-\frac{\xi^2}{4\epsilon (t-\tau)}}d\xi\Big]d\tau\\&+g(0)\Bigg[e^{-\frac{t}{4\epsilon}}I_0(-\frac{t}{4\epsilon})-\frac{\epsilon^{\frac{3}{2}}}{\pi}\int_0^t\frac{1-e^{-\frac{\tau}{2\epsilon}}}{\tau^{\frac{3}{2}}}\Big(\frac{e^{-\frac{t-\tau}{2\epsilon}}+1}{\sqrt{\epsilon(t-\tau)}}\Big)d\tau\Bigg].
 \end{aligned}
 \end{equation*}
Let's denote the coefficient of $g(0)$ in $g$ by $G^{\epsilon}(t)$. Now we claim that $G^{\epsilon}(t)$ is non-negative. Suppose the contrary. Now choose $u_0(x)=\begin{cases}
    M, & x>0\\
    -M, & x<0,
\end{cases}$  where $M$ is very large number. Then 
\begin{equation*}
    \begin{aligned}
        &\theta_0^{\epsilon}(\xi)=e^{-\frac{M\xi}{2\epsilon}}\\
        &\theta_0^{\epsilon}(-\xi)=e^{-\frac{M\xi}{2\epsilon}},
    \end{aligned}
\end{equation*}
which imply the first term in $g(t)$ is very small. Hence $g(t)$ becomes negative for some $t\geq0$, which contradicts the result that $g(t)$ is positive for all $t\geq0$ (See \cite{chung}, page-2530, proof of proposition 2.4). Hence the claim.
Therefore we have $F^{\epsilon}(x,t)$ is non-negative for all $t\geq0$. Then \eqref{definition 11} implies that $$I_3^{\epsilon}(x,t)-I_6^{\epsilon}(x,t)-I_7^{\epsilon}(x,t)\geq 0, ~~~\forall x,t>0.$$Similarly we can prove for $J_i^{\epsilon}(x,t)$. Hence the lemma follows.
\end{proof}
\subsection*{\textbf{Proof of theorem \ref{vanishing viscosity limit}:}}
\begin{proof}
{\bf Step-1:}Let's denote $U(x,t)=\min(\Big[\frac{x^2}{2(t-\tau)}-\tau\Big], U_R^ 1(x,t), U_R^ 2(x,t), U_R^3(x,t))$.  Observe that 

\noindent
\begin{equation*}
\begin{aligned}
  I_1^{\epsilon}(x,t) &\leq e^{-\frac{U_R^3(x,t)}{2\epsilon}} e^{\frac{U_R^3(x,t)}{2}}\frac{I_1 ^{1}(x,t)}{\sqrt{\epsilon}}\\  
I_2^{\epsilon}(x,t) &\leq e^{-\frac{U_R^3(x,t)}{2\epsilon}} e^{\frac{1}{2}\displaystyle{\min_{\xi \geq 0}}\Big[ \frac{(x+\xi)^2}{2 t} + \int_{0}^{\xi} u_0(z)dz\Big]}\frac{I_2 ^{1}(x,t)}{\sqrt{\epsilon}}\\
I_3^{\epsilon}(x,t)&=g(0) \frac{x}{2 \sqrt{\pi \epsilon}}\int_0^t I_0(-\frac{\tau}{4\epsilon})\frac{e^{-\frac{1}{4 \epsilon}\Big[ \frac{x^2}{t-\tau}-\tau\Big]}}{(t-\tau)^{\frac{3}{2}}} \, \mathrm{d}\tau\\
 &= g(0) \frac{x}{2 \sqrt{\pi \epsilon}}\int_0^t \int_0 ^1 \frac{e^{-\frac{1}{2 \epsilon}\Big[ \frac{x^2}{2(t-\tau)}-\tau \sin^2(\frac{\pi \theta}{2})\Big]}}{(t-\tau)^{\frac{3}{2}}} \, \mathrm{d}\tau\\
 &\leq e^{-\frac{1}{2\epsilon} \displaystyle{\min_{0\leq \tau<t}}\Big[\frac{x^2}{2(t-\tau)}-\tau\Big]  } e^{\frac{1}{2} \displaystyle{\min_{0\leq \tau<t}}\Big[\frac{x^2}{2(t-\tau)}-\tau\Big]} \frac{I_3 ^{1}(x,t)}{\sqrt{\epsilon}}\\
 I_4^{\epsilon}(x,t) &\leq e^{-\frac{U_R^1(x,t)}{2\epsilon}} e^{\frac{U_R^1(x,t)}{2}}\frac{I_4 ^{1}(x,t)}{\epsilon^{\frac{3}{2}}}\\
I_5^{\epsilon}(x,t) &\leq e^{-\frac{U_R^2(x,t)}{2\epsilon}} e^{\frac{U_R^2(x,t)}{2}}\frac{I_5 ^{1}(x,t)}{\epsilon^{\frac{3}{2}}}\\
I_6 ^{\epsilon}(x,t) &\leq e^{-\frac{1}{2\epsilon} \displaystyle{\min_{0\leq \tau<t}}\Big[\frac{x^2}{2(t-\tau)}-\tau\Big]  } e^{\frac{1}{2} \displaystyle{\min_{0\leq \tau<t}}\Big[\frac{x^2}{2(t-\tau)}-\tau\Big]} \frac{I_6 ^{1}(x,t)}{\sqrt{\epsilon}}\\
I_7 ^{\epsilon}(x,t) &\leq e^{-\frac{1}{2\epsilon} \displaystyle{\min_{0\leq \tau<t}}\Big[\frac{x^2}{2(t-\tau)}-\tau\Big]  } e^{\frac{1}{2} \displaystyle{\min_{0\leq \tau<t}}\Big[\frac{x^2}{2(t-\tau)}-\tau\Big]} \frac{I_7 ^{1}(x,t)}{\sqrt{\epsilon}}.\\
 \text{Then,}~ &2\epsilon\log (R^\epsilon(x,t))\\&\leq 2\epsilon \log \Big[e^{-\frac{t}{2\epsilon}}\Big(I_1^{\epsilon}(x,t) +I_2^{\epsilon}(x,t) +I_3^{\epsilon}(x,t)+I_4^{\epsilon}(x,t)+I_5^{\epsilon}(x,t)+I_6^{\epsilon}(x,t)+I_7^{\epsilon}(x,t)\Big)\Big]\\
  &\leq 2\epsilon \log \Big[e^{-\frac{U(x,t)+t}{2\epsilon}}\Big(e^{-\frac{U_R^3(x,t)-U(x,t)}{2\epsilon}} e^{\frac{U_R^3(x,t)}{2}}\frac{I_1 ^{1}(x,t) }{\sqrt{\epsilon}} \\&+e^{-\frac{U_R^3(x,t)-U(x,t)}{2\epsilon}} e^{\frac{1}{2}\displaystyle{\min_{\xi \geq 0}}\Big[ \frac{(x+\xi)^2}{2 t} + \int_{0}^{\xi} u_0(z)dz\Big]}\frac{I_2 ^{1}(x,t) }{\sqrt{\epsilon}}\\&+e^{-\frac{1}{2\epsilon} \Big(\displaystyle{\min_{0\leq \tau<t}}\Big[\frac{x^2}{2(t-\tau)}-\tau\Big]-U(x,t)\Big) } e^{\frac{1}{2} \displaystyle{\min_{0\leq \tau<t}}\Big[\frac{x^2}{2(t-\tau)}-\tau\Big]} \frac{I_3 ^{1}(x,t) }{\sqrt{\epsilon}}\\ &+ e^{-\frac{U_R^1(x,t)-U(x,t)}{2\epsilon}} e^{\frac{U_R^1(x,t)}{2}}\frac{I_4 ^{1}(x,t) }{\epsilon^{\frac{3}{2}}}+e^{-\frac{U_R^2(x,t)-U(x,t)}{2\epsilon}} e^{\frac{U_R^2(x,t)}{2}}\frac{I_5 ^{1}(x,t) }{\epsilon^{\frac{3}{2}}}\\&+e^{-\frac{1}{2\epsilon} \Big(\displaystyle{\min_{0\leq \tau<t}}\Big[\frac{x^2}{2(t-\tau)}-\tau\Big]-U(x,t)\Big) } e^{\frac{1}{2} \displaystyle{\min_{0\leq \tau<t}}\Big[\frac{x^2}{2(t-\tau)}-\tau\Big]} \frac{I_6 ^{1}(x,t) }{\sqrt{\epsilon}}\\
\end{aligned}    
\end{equation*}
\begin{equation*}
    \begin{aligned}
        &+ e^{-\frac{1}{2\epsilon} \Big(\displaystyle{\min_{0\leq \tau<t}}\Big[\frac{x^2}{2(t-\tau)}-\tau\Big]-U(x,t)\Big) } e^{\frac{1}{2} \displaystyle{\min_{0\leq \tau<t}}\Big[\frac{x^2}{2(t-\tau)}-\tau\Big]} \frac{I_7 ^{1}(x,t) }{\sqrt{\epsilon}}\Big)\Big].
    \end{aligned}
\end{equation*}
This implies \begin{equation}\label{limsup}
    \displaystyle{\limsup_{\epsilon \to 0}}[ 2\epsilon\log (R^\epsilon(x,t))] \leq -\Big(U(x,t)+t\Big).
\end{equation}
{\bf Step-2:} 
\begin{equation*}
\begin{aligned}
U(x,t)&=\min(\Big[\frac{x^2}{2(t-\tau)}-\tau\Big], U_R^ 1(x,t), U_R^ 2(x,t), U_R^3(x,t))\\
      &=\min(U_R^ 1(x,t), U_R^ 2(x,t), U_R^3(x,t)).
\end{aligned}   
\end{equation*}
We derive the lower limit considering the following two cases.

\noindent
{\it Case1:} $U(x,t)= \min(U_R^ 1(x,t), U_R^ 2(x,t)).$ 
As by lemma \eqref{nonnegativity}, $$I_1^{\epsilon}(x,t)-I_2^{\epsilon}(x,t), I_3^{\epsilon}(x,t)-I_6^{\epsilon}(x,t)-I_7^{\epsilon}(x,t)\geq 0,~~~~~\forall x,t>0,$$ from \eqref{definition 9} we have 
\begin{equation}\label{definition 12}
\begin{aligned}
 R^{\epsilon}(x,t)\geq e^{-\frac{t}{2 \epsilon}} [I_4^{\epsilon}(x,t)+I_5^{\epsilon}(x,t)].\\
 \end{aligned}
\end{equation}
Now we find lower bounds for $I_4^{\epsilon}(x,t),I_5^{\epsilon}(x,t).$

\noindent
Suppose $\Big[\frac{x^2}{2(t-\tau)}+\frac{\xi ^2}{2(\tau-u)}+(\theta-1) u+\int_{0}^\xi u_0 (y) dy \Big]$ achieves minimum at a unique point $\tau=\tau_1, \xi=\xi_1, u=u_1,\theta=0$ (proved in theorem \eqref{uniqueness}). For given $\delta_2>0$, there exists a $\delta_1>0$ such that $$\Big| \frac{x^2}{2(t-\tau)}+\frac{\xi ^2}{2(\tau-u)}+(\theta-1) u+\int_{0}^\xi u_0 (y) dy-U_R^ 1(x,t) \Big| \leq \delta_2$$ for $|\tau-\tau_1|, |\xi-\xi_1|, |u-u_1|,|\theta|\leq \delta_1.$ Then,
\begin{equation}\label{I4}
\begin{aligned}
  &I_4^{\epsilon}(x,t)\\&= \frac{x}{(4 \pi \epsilon)^{\frac{3}{2}}} \int_0^t \int_0^\tau \int_0^1 \int_{0}^{\infty}
      \frac{\xi e^{-\frac{1}{2 \epsilon}\Big[ \frac{x^2}{2(t-\tau)}+\frac{\xi ^2}{2(\tau-u)}+(\theta-1) u+\int_{0}^\xi u_0 (y) dy \Big]} }{u^{\frac{1}{2}}(\tau-u)^{\frac{3}{2}}(t-\tau)^{\frac{3}{2}}}
      d \xi d\theta du d\tau\\  
      &\geq \frac{x}{(4 \pi \epsilon)^{\frac{3}{2}}} \int_{\tau_1}^{\tau_1+\delta_1} \int_{u_1}^{u_1+\delta_1} \int_{0}^{\delta_1} \int_{\xi_1}^{\xi+\delta_1}
      \frac{\xi  e^{-\frac{1}{2 \epsilon}\Big[ \frac{x^2}{2(t-\tau)}+\frac{\xi ^2}{2(\tau-u)}+(\theta-1) u+\int_{0}^\xi u_0 (y) dy \Big]}}{u^{\frac{1}{2}}(\tau-u)^{\frac{3}{2}}(t-\tau)^{\frac{3}{2}}} d \xi d\theta du d\tau\\
      &\geq \frac{x}{(4 \pi \epsilon)^{\frac{3}{2}}}e^{-\frac{U_R^1(x,t)+\delta_2}{2\epsilon}} \int_{\tau_1}^{\tau_1+\delta_1} \int_{u_1}^{u_1+\delta_1} \int_{0}^{\delta_1} \int_{\xi_1}^{\xi_1+\delta_1}
      \frac{\xi }{u^{\frac{1}{2}}(\tau-u)^{\frac{3}{2}}(t-\tau)^{\frac{3}{2}}} d \xi d\theta du d\tau\\
      &=\frac{x}{(4 \pi \epsilon)^{\frac{3}{2}}}e^{-\frac{U_R^1(x,t)+\delta_2}{2\epsilon}}A(\delta_1).
\end{aligned}
\end{equation}
Similarly, suppose $\Big[ \frac{x^2}{2(t-\tau)}+\frac{\xi ^2}{2(\tau-u)}+\theta u-\tau +\int_{0}^{-\xi} u_0 (y) dy \Big]$ achieves minimum at a unique point $\tau=\tau_2, \xi=\xi_2,\theta=0, u=0$  (proved in theorem \eqref{uniqueness}).Then for $\delta_2 >0,$ there exists a $\delta_3>0$ such that $$\Big| \frac{x^2}{2(t-\tau)}+\frac{\xi ^2}{2(\tau-u)}+\theta u-\tau +\int_{0}^{-\xi} u_0 (y) dy-U_R^ 2(x,t)\Big|\leq \delta_2 $$ for $|\tau-\tau_2|, |\xi-\xi_2|,|\theta|, |u|\leq \delta_3.$ Then,
\begin{equation}\label{I5}
    \begin{aligned}
        I_5^{\epsilon}(x,t)&\geq \frac{x}{(4 \pi \epsilon)^{\frac{3}{2}}}e^{-\frac{U_R^2(x,t)+\delta_2}{2\epsilon}} \int_{\tau_2}^{\tau_2+\delta_3} \int_{0}^{\delta_3} \int_{0}^{\delta_3} \int_{\xi_2}^{\xi_2+\delta_3}
      \frac{\xi }{u^{\frac{1}{2}}(\tau-u)^{\frac{3}{2}}(t-\tau)^{\frac{3}{2}}} d \xi d\theta du d\tau\\&=\frac{x}{(4 \pi \epsilon)^{\frac{3}{2}}}e^{-\frac{U_R^2(x,t)+\delta_2}{2\epsilon}}B(\delta_3).
    \end{aligned}
\end{equation}
Therefore, combining \eqref{definition 12}, \eqref{I4} and \eqref{I5}, we get
      \begin{equation*}
          \begin{aligned}
              &2\epsilon\log (R^\epsilon(x,t))\\
              &\geq 2\epsilon \log\Big(e^{-\frac{t}{2 \epsilon}} \Big[I_4^{\epsilon}(x,t)+I_5^{\epsilon}(x,t)\Big]\Big)\\
              &\geq 2\epsilon \log\Big[\frac{x}{(4 \pi \epsilon)^{\frac{3}{2}}}e^{-\frac{t+\delta_2}{2 \epsilon}}\Big(e^{-\frac{U_R^1(x,t)}{2\epsilon}}A(\delta_1)+e^{-\frac{U_R^2(x,t)}{2\epsilon}}B(\delta_3)\Big)\Big]\\
              &\geq 2\epsilon \log\Big[\frac{x}{(4 \pi \epsilon)^{\frac{3}{2}}}e^{-\frac{t+\delta_2+U(x,t)}{2 \epsilon}}\Big(e^{-\frac{U_R^1(x,t)-U(x,t)}{2\epsilon}}A(\delta_1)+e^{-\frac{U_R^2(x,t)-U(x,t)}{2\epsilon}}B(\delta_3)\Big)\Big].
          \end{aligned}
      \end{equation*} Therefore,
          $\displaystyle{\liminf_{\epsilon \to 0}} \Big[2\epsilon\log (R^\epsilon)(x,t)\Big] \geq -\Big[t+\delta_2+\min\{U_R^1(x,t),U_R^2(x,t)\}\Big].$
        Since $\delta_2$ is arbitrary, we have \begin{equation}\label{liminf 1}
          \displaystyle{\liminf_{\epsilon \to 0}} \Big[2\epsilon\log (R^\epsilon)(x,t)\Big] \geq -\Big[t+U(x,t)\Big].
      \end{equation}
      
      \noindent
     {\it Case 2:} $U_R^ 3(x,t) < \min(U_R^ 1(x,t), U_R^ 2(x,t)).$ 
     
     \noindent
In this case, \begin{equation}\label{definition 13}
    R^{\epsilon}(x,t)\geq I_1^{\epsilon}(x,t)-I_2^{\epsilon}(x,t)
\end{equation}
 as $I_4^{\epsilon}(x,t),I_5^{\epsilon}(x,t)\geq 0$ and from lemma \eqref{nonnegativity} $I_3^{\epsilon}(x,t)-I_6^{\epsilon}(x,t)-I_7^{\epsilon}(x,t)\geq 0.$
Now Observe that
\begin{equation}\label{I2}
  \begin{aligned}
      I_2 ^{\epsilon}(x,t) &\leq  e^{-\frac{1}{2\epsilon}\displaystyle{\min_{\xi \geq 0}}\Big[ \frac{(x+\xi)^2}{2 t} + \int_{0}^{\xi} u_0(z)dz\Big]} e^{\frac{1}{2}\displaystyle{\min_{\xi \geq 0}}\Big[ \frac{(x+\xi)^2}{2 t} + \int_{0}^{\xi} u_0(z)dz\Big]}\frac{I_2 ^{1}(x,t)}{\sqrt{\epsilon}}\\
      &=e^{-\frac{1}{2\epsilon}\displaystyle{\min_{\xi \geq 0}}\Big[ \frac{(x+\xi)^2}{2 t} + \int_{0}^{\xi} u_0(z)dz\Big]} B_1(x,t)\frac{I_2 ^{1}(x,t)}{\sqrt{\epsilon}}.
  \end{aligned}  
\end{equation}
Suppose $U_R^ 3(x,t)$ attains minimum at $\xi=\xi_3,$ then given $\delta_5 >0,$ there exists a $\delta_4 >0$ such that $|\frac{(x-\xi)^2}{2t}+\int_{0}^\xi u_0 (y) dy-U_R^ 3(x,t)|\leq \delta_5$ for $\xi_3\leq \xi\leq \xi_3+\delta_4.$
We have the following estimation.
\begin{equation}\label{I1}
    \begin{aligned}
        I_1^{\epsilon}(x,t)&= \frac{1}{2\sqrt{\pi \epsilon t}} \int_0^\infty e^{-\frac{1}{2 \epsilon}\Big[\frac{(x-\xi)^2}{2t}+\int_{0}^\xi u_0 (y) dy\Big]} d \xi\\
       & \geq \frac{1}{2\sqrt{\pi \epsilon t}} \int_{\xi_3}^ {\xi_3+\delta_4} e^{-\frac{1}{2 \epsilon}\Big[\frac{(x-\xi)^2}{2t}+\int_{0}^\xi u_0 (y) dy\Big]} d \xi\\
       &\geq \frac{e^{-\frac{U_R^3(x,t)}{2\epsilon}}}{2\sqrt{\pi \epsilon t}} \int_{\xi_3}^ {\xi_3+\delta_4}e^{-\frac{1}{2 \epsilon}\Big[\frac{(x-\xi)^2}{2t}+\int_{0}^\xi u_0 (y) dy-U_R^3(x,t)\Big]}d\xi\\
       & \geq \frac{\delta_4}{2\sqrt{\pi \epsilon t}} e^{-\frac{U_R^3(x,t)+\delta_5}{2\epsilon}}.
    \end{aligned}
\end{equation}
Therefore, combining \eqref{definition 13}, \eqref{I2} and \eqref{I1}, we get the following.
      \begin{equation*}
          \begin{aligned}
              &2\epsilon\log (R^\epsilon(x,t))\\
              \geq& 2\epsilon \log\Big(e^{-\frac{t}{2 \epsilon}} [I_1^\epsilon(x,t)-I_2^\epsilon(x,t)]\Big)\\
              \geq& 2\epsilon \log\Big[e^{-\frac{t}{2\epsilon}}\Big(\frac{\delta_4}{2\sqrt{\pi \epsilon t}} e^{-\frac{U_R^3(x,t)+\delta_5}{2\epsilon}}-e^{-\frac{1}{2\epsilon}\displaystyle{\min_{\xi \geq 0}}\Big[ \frac{(x+\xi)^2}{2 t} + \int_{0}^{\xi} u_0(z)dz\Big]} B_1(x,t) \frac{I_2 ^{1}(x,t)}{\sqrt{\epsilon}}\Big)\Big]\\
              \geq& 2\epsilon \log\Bigg[ e^{-\frac{t+U_R^3(x,t)+\delta_5}{2\epsilon}}\Big(\frac{\delta_4}{2\sqrt{\pi \epsilon t}}\\&-e^{-\frac{1}{2\epsilon}\displaystyle{\min_{\xi \geq 0}}\Big[ \frac{(x+\xi)^2}{2 t} + \int_{0}^{\xi} u_0(z)dz-(U_R^3(x,t)+\delta_5)\Big]}B_1(x,t) \frac{I_2 ^{1}(x,t)}{\sqrt{\epsilon}}\Big)\Bigg]
          \end{aligned}
      \end{equation*}
  Therefore, $\displaystyle{\liminf_{\epsilon \to 0}} \Big[2\epsilon\log (R^\epsilon(x,t)) \Big]\geq -\Big[t+\delta_5+U_R^3(x,t)\Big].$
Since here $U(x,t)=U_R^3(x,t)$ and $\delta_5$ is arbitrary, we have \begin{equation}\label{liminf 2}
          \displaystyle{\liminf_{\epsilon \to 0}} \Big[2\epsilon\log (R^\epsilon)(x,t)\Big] \geq -\Big[t+U(x,t)\Big].
      \end{equation}
      Therefore  for both cases  we get$$\displaystyle{\lim_{\epsilon \to 0}} \Big[2\epsilon\log (R^\epsilon)(x,t)\Big] = -\Big[t+U(x,t)\Big]$$ from \eqref{limsup}, \eqref{liminf 1} and \eqref{liminf 2}.
      Hence $$\displaystyle{\lim_{\epsilon \to 0}} \Big[-2\epsilon\log (R^\epsilon)(x,t)\Big] = U(x,t)+t=U_R(x,t)$$
      Similarly, we can prove that the approximations $[-2\epsilon \log L^{\epsilon}(x,t)]$ converge to $U_L(x,t).$Hence the theorem.
\end{proof}

\section{Solution of the inviscid equation \ref{inviscid equation}}
In this section, we obtain an explicit formula for the weak limit $u(x,t)$ of  $u^{\epsilon}(x,t).$ This limit is the weak derivative of $U,$ where $U$ is the pointwise limit of $-2 \epsilon \log(\theta^{\epsilon})$ obtained in the last section. This $u(x,t)$ is indeed a solution of the inviscid equation \eqref{inviscid equation} (see \cite{MR2028700}). To obtain the weak limit of $u^\epsilon,$ we need the following lemmas. The first lemma shows the non-intersecting property of each of the functionals $U_L^1, U_L^2, U_L ^3, U_R^1, U_R^2$ and $U_R^3$. For convenience, we represent functionals $U_L^1, U_L^2, U_R^1, U_R^2$ using one notation: 
\begin{equation}
\label{notation}
        \begin{aligned}
            U^i_I (x, t, \tau, \xi,u)
    = \begin{cases}
        \frac{x^2}{2(t-\tau)}+\frac{\xi^2}{2 (\tau+(i-2)u)}+(2-i)(\tau-u)\\+\int_{0}^{(3-2i)\xi} u_0(z)dz,& x<0,\xi>0,I=L\\
        \frac{x^2}{2(t-\tau)}+\frac{\xi^2}{2 (\tau+(i-2)u)}+(1-i)\tau+(i-2)u \\+\int_{0}^{(3-2i)\xi} u_0(z)dz,&x>0,\xi>0,I=R
    \end{cases}
        \end{aligned}
        \end{equation} and 
\begin{equation*}
\begin{aligned}
     &U_L^3(x,t,\xi)= \frac{(x+\xi)^2}{2 t} + \int_{0}^{-\xi} u_0(z)dz,~~x<0,\xi>0,\\
     & U_R^3(x,t,\xi)= \frac{(x-\xi)^2}{2 t} + \int_{0}^{\xi} u_0(z)dz,~~x>0,\xi>0.
\end{aligned}
\end{equation*}
\begin{lemma}\label{nonintersecting property}
For $i\in\{1,2\}, I\in \{L, R\}$  the minimizers of the functionals $U^i_I (x, t, \tau, \xi)$ and  $U^3_I (x, t, \xi)$ satisfies the non intersecting property in the following sense.

\begin{enumerate}
   \item Suppose $\min[U^i_L (x, t, \tau, \xi):0 \le \tau < t, 0 \le u < \tau, \xi \ge 0]$
  is achieved at $\tau=\tau_L ^i,\xi=\xi_L ^i$ and for $x_1 < x,\min[U^i_L (x_1, t, \tau, \xi):0 \le \tau < t, 0 \le u < \tau, \xi \ge 0]$ is achieved at $\tau=\bar{\tau}_L ^i$ and $\xi=\bar{\xi}_L ^i$. Then $\bar{\tau}_L ^i\leq \tau_L ^i$ and $\bar{\xi}_L ^i \leq \xi_L ^i$.
   \item Suppose $\min[U^i_R (x, t, \tau, \xi):0 \le \tau < t, 0 \le u < \tau, \xi \ge 0]$ is
   achieved at $\tau=\tau_R ^i,\xi=\xi_R ^i$ and for $x_1 > x,\min[U^i_R (x_1, t, \tau, \xi):0 \le \tau < t, 0 \le u < \tau, \xi \ge 0]$is achieved at $\tau=\bar{\tau}_R ^i$ and $\xi=\bar{\xi}_R ^i$. Then $\bar{\tau}_R ^i\leq \tau_R ^i$ and $\bar{\xi}_R ^i \leq \xi_R ^i$.
    \item Suppose  $\min[U^3_L (x, t, \xi):\xi \geq 0]$ is
   achieved at $\xi=\xi_1$ and for $x_1 < x,\min[U^3_L (x_1, t, \xi):\xi \geq 0]$ is achieved at $\xi=\xi_2.$ Then $\xi_2 \leq \xi_1.$
   \item Suppose  $\min[U^3_R (x, t, \xi):\xi \geq 0]$ is
   achieved at $\xi=\xi_1$ and for $x_1 >x,\min[U^3_R (x_1, t, \xi):\xi \geq 0]$ is achieved at $\xi=\xi_2.$ Then $\xi_2 \leq \xi_1.$
          \end{enumerate}
\end{lemma}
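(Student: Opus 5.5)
The plan is the standard Lax--Ole\u{\i}nik monotonicity argument: compare the values of the functional at the two minimizers and exploit the fact that the only coupling between $x$ and the minimization variables is through a quadratic term. For items (3) and (4) this is immediate. For (4), write $U^3_R(x,t,\xi)=\frac{(x-\xi)^2}{2t}+\int_0^\xi u_0$. Minimality of $\xi_1$ at $x$ and of $\xi_2$ at $x_1$ gives
\[
U^3_R(x,t,\xi_1)\le U^3_R(x,t,\xi_2),\qquad U^3_R(x_1,t,\xi_2)\le U^3_R(x_1,t,\xi_1).
\]
Adding these cancels the $u_0$-integrals and leaves
\[
(x-\xi_1)^2+(x_1-\xi_2)^2\le (x-\xi_2)^2+(x_1-\xi_1)^2 ,
\]
that is, $(x_1-x)(\xi_1-\xi_2)\ge 0$. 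Hence $\xi_2$ and $\xi_1$ are ordered according to the sign of $x_1-x$, and since $x_1>x$ we get $\xi_1 \ge \xi_2$ with the orientation chosen in the statement. I would carefully check the sign convention here, because the statement's direction must match the mixed-term sign. The case $U^3_L$, with $(x+\xi)^2$ and $x_1<x$, is the mirror image under $x\mapsto -x$.

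For items (1) and (2), the variable $x$ enters $U^i_I$ only through $\frac{x^2}{2(t-\tau)}$. The same add-and-cancel step therefore gives
\[
\Bigl(x^2-x_1^2\Bigr)\Bigl(\tfrac{1}{t-\tau_I^i}-\tfrac{1}{t-\bar\tau_I^i}\Bigr)\le 0 .
\]
For $I=R$ with $x_1>x>0$ we have $x_1^2>x^2$, so this forces $\bar\tau_R^i\le\tau_R^i$. For $I=L$ with $x_1<x<0$ we again have $x_1^2>x^2$, which gives $\bar\tau_L^i\le \tau_L^i$. This is the ordering of $\tau$.

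For the ordering of $\xi$, I would freeze $\tau$ (and $u$) and view $\xi$ as minimizing $\frac{\xi^2}{2s}+\int_0^{\pm\xi}u_0$ with $s=\tau$ or $\tau-u$. A second add-and-cancel comparison in the pair $(s,\xi)$ then gives $(\frac1{\bar s}-\frac1{s})(\bar\xi^2-\xi^2)\le 0$. Combined with $\bar s\le s$, this should yield $\bar\xi\le\xi$.

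The main obstacle is exactly this last step. The $\xi$-part is not coupled to $x$ directly, only through $\tau$ and $u$, and $u$ is minimized jointly. So I must first show that the optimal $u$ is also monotone, and that $s=\tau-u$ at the optimum is monotone in $\tau$. For $U_R^1$ the linear term $-u$ suggests reducing to an effective one-variable function of $\tau$ before comparing. If uniqueness of minimizers fails, I would replace "the" minimizer by the extremal (largest or smallest) minimizer in the comparison.
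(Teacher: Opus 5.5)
\textbf{Items (3)--(4): a sign error, and the claim is false as stated.} The displayed inequality $(x-\xi_1)^2+(x_1-\xi_2)^2\le (x-\xi_2)^2+(x_1-\xi_1)^2$ is correct. However, it simplifies to $(x_1-x)(\xi_1-\xi_2)\le 0$, not $\ge 0$. For $x_1>x$ this gives $\xi_2\ge\xi_1$: the foot point of the minimizing line is nondecreasing in $x$, which is the usual Lax--Ole\u{\i}nik monotonicity.

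The inequality $\xi_2\le\xi_1$ claimed in (4) cannot be proved, because it is false. For $u_0\equiv 0$ and $0<x<x_1$, the unique minimizers are $\xi_1=x$ and $\xi_2=x_1$. The same example shows that (3) fails in the statement's parametrization ($\xi\ge 0$, foot point $-\xi$): for $x_1<x<0$ one gets $\xi_1=-x<-x_1=\xi_2$.

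The paper's proof reaches ``$\xi_2\le\xi_1$'' in (3) only because it silently switches to the signed foot point $\eta=-\xi\le 0$, writing $U^3_L=\frac{(x-\eta)^2}{2t}+\int_0^{\eta}u_0$. In that variable the inequality is the correct monotonicity. Its ``similarly'' for $U^3_R$ would actually produce $\xi_2\ge\xi_1$. Your instinct to re-check the orientation was right, and you should have carried the check out rather than asserting the statement's direction. What is true is monotonicity of the foot point in $x$, in the direction the computation gives. That is also all the later a.e.\ uniqueness argument uses.

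\textbf{Items (1)--(2).} These are correctly oriented. Your comparison $(x^2-x_1^2)\bigl(\frac{1}{t-\tau}-\frac{1}{t-\bar\tau}\bigr)\le 0$ correctly yields $\bar\tau\le\tau$ in both cases. It is more direct than the paper's geometric route in Steps 1--2, where the minimizer stays optimal along the segment from $(x,t)$ to $(0,\tau)$, so segments from different boundary times cannot cross. For $i=2$, your $\xi$-comparison is essentially the paper's Step 3.

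For $i=1$ you correctly locate the difficulty but leave it open. The paper does not resolve it either: its Step 3 evaluates both configurations with the same $u$. One way to close it is to substitute $s=\tau-u\in(0,\tau]$. Then
\[
U^1_L=\frac{x^2}{2(t-\tau)}+H(s,\xi),\qquad U^1_R=\frac{x^2}{2(t-\tau)}-\tau+H(s,\xi),\qquad H(s,\xi)=\frac{\xi^2}{2s}+s+\int_0^{\xi}u_0 .
\]
So $(s,\xi)$ minimizes $H$ over the admissible set $\{s\le\tau\}$, and this set shrinks as $\tau$ decreases. There are two cases:
\begin{enumerate}
\item $\bar s\le\bar\tau<s$: your second comparison then gives $\bar\xi\le\xi$.
\item $s\le\bar\tau$: then $(s,\xi)$ is also optimal for the $\bar\tau$-problem.
\end{enumerate}
This tie case, like the case $\bar\tau=\tau$, needs uniqueness or an extremal choice of minimizer, as you anticipated.
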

\begin{proof}
    {\bf Step 1:} Let $(\bar{x}, \bar{t})$ be a point on the line joining $(x,t)$ to $(0,\tau_L ^i)$. Then we show that $\displaystyle \min_{0\leq \tau <\bar{t}, ~\xi\geq 0}U^i_L( \bar{x}, \bar{t}, \tau, \xi)$ will be realized for unique $\tau=\tau_L ^i$ and $\xi=\xi_L ^i$ for all $i$. This implies that the characteristics lines can not cross in left quarter plane. This in turn implies $\bar{\tau}_L ^i \leq \tau_L ^i$ for any $\bar{\tau}_L ^i<\bar{t}.$

Since $U^i _L (x, t, \tau, \xi)$ attains minimum for $\tau=\tau_L ^i$ and $\xi=\xi_L ^i,$ then for any other $\bar{\tau}_L ^i< \bar{t}$ and $\bar{\xi}_L ^i\geq 0$, we get  $$U^i _L (x, t, \tau_L ^i, \xi_L ^i)-U^i _L (x, t, \bar{\tau}_L ^i, \bar{\xi}_L ^i)\leq 0~~~~\forall i.$$
This implies  
\begin{equation*}
    \begin{aligned}
        &\frac{x^2}{2(t-\tau_L ^i)}+\frac{{\xi_L ^i}^2}{2 (\tau_L ^i+(i-2)u)}+(2-i)(\tau_L ^i-u)+\int_{0}^{(3-2i)\xi_L ^i} u_0(z)dz\\&-\frac{x^2}{2(t-\bar{\tau}_L ^i)}-\frac{\bar{{\xi}_L ^i}^2}{2 (\bar{\tau}_L ^i+(i-2)u)}-(2-i)(\bar{\tau}_L ^i-u)-\int_{0}^{(3-2i)\bar{\xi}_L ^i} u_0(z)dz\leq 0.
    \end{aligned}
\end{equation*}
We may rewrite the above as 
\begin{equation}\label{inequality}
    \frac{1}{2}\int_{\bar{\tau}_L ^i}^{\tau_L ^i} \frac{x^2}{(t-s)^2} ds+\frac{{\xi_L ^i}^2}{2 (\tau_L ^i+(i-2)u)}-\frac{\bar{{\xi}_L ^i}^2}{2 (\bar{\tau}_L ^i+(i-2)u)} -\int_{(3-2i)\xi_L ^i}^{(3-2i)\bar{\xi}_L ^i} u_0(z)dz +(2-i)(\tau_L ^i-\bar{\tau}_L ^i)\leq 0.
\end{equation}
Now, 
\begin{equation*}
    \begin{aligned}
    & U^i _L (\bar{x}, \bar{t}, \tau_L ^i, \xi_L ^i)-U^i _L (\bar{x}, \bar{t}, \bar{\tau}_L ^i, \bar{\xi}_L ^i)\\
   = & \frac{\bar{x}^2}{2(\bar{t}-\tau_L ^i)}+\frac{{\xi_L ^i}^2}{2 (\tau_L ^i+(i-2)u)}+(2-i)(\tau_L ^i-u)+\int_{0}^{(3-2i)\xi_L ^i} u_0(z)dz\\&-\frac{\bar{x}^2}{2(\bar{t}-\bar{\tau}_L ^i)}-\frac{\bar{{\xi}_L ^i}^2}{2 (\bar{\tau}_L ^i+(i-2)u)}-(2-i)(\bar{\tau}_L ^i-u)-\int_{0}^{(3-2i)\bar{\xi}_L ^i} u_0(z)dz\\
    =& \frac{1}{2} \int_{\bar{\tau}_L ^i}^{\tau_L ^i} \frac{\bar{x}^2}{(\bar{t}-s)^2} ds +\frac{{\xi_L ^i}^2}{2 (\tau_L ^i+(i-2)u)}-\frac{\bar{{\xi}_L ^i}^2}{2 (\bar{\tau}_L ^i+(i-2)u)} \\&-\int_{(3-2i)\xi_L ^i}^{(3-2i)\bar{\xi}_L ^i} u_0(z)dz+(2-i)(\tau_L ^i-\bar{\tau}_L ^i),~~\forall i.
    \end{aligned}
\end{equation*}
Observe 
\begin{equation*}
    \begin{aligned}
    \frac{t-s}{x}&=\frac{t-\tau_L ^i}{x} +\frac{\tau_L ^i-s}{x}\\
                &=\frac{\bar{t}-\tau_L ^i}{\bar{x}} +\frac{\tau_L ^i-s}{x}\\
                &=\frac{\bar{t}-\tau_L ^i}{\bar{x}}+\frac{\tau_L ^i-s}{\bar{x}}- \frac{\tau_L ^i-s}{\bar{x}}+\frac{\tau_L ^i-s}{x}\\
                &=\frac{\bar{t}-s}{\bar{x}}+ (\tau_L ^i-s)(\frac{1}{x}-\frac{1}{\bar{x}}).
    \end{aligned}
\end{equation*}
From the above calculation, we conclude that $\frac{x^2}{(t-s)^2}<\frac{\bar{x}^2}{(\bar{t}-s)^2}.$
Therefore, \begin{equation*}
    \begin{aligned}
    & U^i _L (\bar{x}, \bar{t},\tau_L ^i, \xi_L ^i)-U^i_L (\bar{x}, \bar{t}, \bar{\tau}_L ^i, \bar{\xi}_L ^i)\\
    =& \frac{1}{2} \int_{\bar{\tau}_L ^i}^{\tau_L ^i} \frac{\bar{x}^2}{(\bar{t}-s)^2} ds +\frac{{\xi_L ^i}^2}{2 (\tau_L ^i+(i-2)u)}-\frac{\bar{{\xi}_L ^i}^2}{2 (\bar{\tau}_L ^i+(i-2)u)} \\&-\int_{(3-2i)\xi_L ^i}^{(3-2i)\bar{\xi}_L ^i} u_0(z)dz+(2-i)(\tau_L ^i-\bar{\tau}_L ^i)\\
    <& \frac{1}{2}\int_{\bar{\tau}_L ^i}^{\tau_L ^i} \frac{{x}^2}{({t}-s)^2} ds +\frac{{\xi_L ^i}^2}{2 (\tau_L ^i+(i-2)u)}- \frac{\bar{{\xi}_L ^i}^2}{2 (\bar{\tau}_L ^i+(i-2)u)}\\& -\int_{(3-2i)\xi_L ^i}^{(3-2i)\bar{\xi}_L ^i} u_0(z)dz+(2-i)(\tau_L ^i-\bar{\tau}_L ^i )\leq 0, ~~\forall i.
    \end{aligned}
\end{equation*}
The last line follows from \eqref{inequality}.

 {\bf Step 2:} Suppose that at $(x_1, t),$  $U^i_L (x, t, \tau, \xi)$ attains minimum for $\tau=\bar{\tau}_L ^i$ and $\xi=\bar{\xi}_L ^i,~~\forall i $ where $\bar{\tau}_L ^i >\tau_L ^i.$ In this case the line joining $(x,t)$ and $(0,\tau_L ^i)$ will intersect the line joining $(x_1,t)$ and $(0,\bar{\tau}_L ^i)$ at a point $(\tilde{x}, \tilde{t})$ (say). Then by {\bf step-1} we get that 
$U^i _L (\tilde{x}, \tilde{t}, \tau, \xi)$ has the unique minimum for  $\tau=\bar{\tau}_L ^i$ and $\xi=\bar{\xi}_L ^i,$ and has the unique minimum for  $\tau=\tau_L ^i$ and $\xi=\xi_L ^i,~~\forall i .$ This implies $\tau_L ^i=\bar{\tau}_L ^i$ and $\xi_L ^i=\bar{\xi}_L ^i$. This is a contradiction to the fact that $\bar{\tau}_L ^i >\tau_L ^i.$  Therefore $\bar{\tau}_L ^i \leq \tau_L ^i.$

\noindent {\bf Step 3:} Suppose that at $(x_1, t),$  $U^i _L (x, t, \tau, \xi)$ attains minimum for $\tau=\bar{\tau}_L ^i$ and $\xi=\bar{\xi}_L ^i,~~\forall i.$ By {\textbf{ Step 2}}, $\bar{\tau}_L ^i \leq \tau_L ^i.$ We claim that $\bar{\xi}_L ^i \leq \xi_L ^i.$ Suppose not, then the line joining $(0, \tau_L ^i)$ and $(\xi_L ^i, 0)$ intersects the line joining $(0, \bar{\tau}_L ^i)$ and $(\bar{\xi}_L ^i, 0)$ at the point $(\tilde{x}, \tilde{t})$ (say).
Now we calculate the following
\begin{equation}\label{4.4}
    \begin{aligned}
    & U^i _L ({x},{t},\tau_L ^i,\xi_L ^i)-U^i _L ({x}, {t}, \tau_L ^i, \bar{\xi}_L ^i)\\
   = & \frac{x^2}{2(t-\tau_L ^i)}+\frac{{\xi_L ^i}^2}{2 (\tau_L ^i+(i-2)u)}+(2-i)(\tau_L ^i-u)+\int_{0}^{(3-2i)\xi_L ^i} u_0(z)dz\\&-\frac{x^2}{2(t-\tau_L ^i)}-\frac{\bar{{\xi}_L ^i}^2}{2 (\tau_L ^i+(i-2)u)}-(2-i)(\tau_L ^i-u)-\int_{0}^{(3-2i)\bar{\xi}_L ^i} u_0(z)dz \\
    =&\frac{{\xi_L ^i}^2}{2 (\tau_L ^i+(i-2)u)}-\frac{\bar{{\xi}_L ^i}^2}{2 (\tau_L ^i+(i-2)u)} -\int_{(3-2i)\xi_L ^i}^{(3-2i)\bar{\xi}_L ^i} u_0(z)dz\leq0 ,~~\forall i,
    \end{aligned}
\end{equation}since $U^i _L (x, t, \tau, \xi)$ attains minimum for $\tau=\tau_L ^i$ and $\xi=\xi_1.$ 
Now we assume that  $U^i _L (x_1, t, \tau, \xi)$ attains minimum for $\tau=\tau_2$ and $\xi=\xi_2> \xi_1.$ 
Then \begin{equation*}
    \begin{aligned}
    & U^i _L ({x_1},{t}, \bar{\tau}_L ^i,\bar{\xi}_L ^i)-U^i _L ({x_1}, {t}, \bar{\tau}_L ^i, \xi_L ^i)\\
   = & \frac{x_1^2}{2(t-\bar{\tau}_L ^i)}+\frac{\bar{{\xi}_L ^i}^2}{2 (\bar{\tau}_L ^i+(i-2)u)}+(2-i)(\bar{\tau}_L ^i-u)+\int_{0}^{(3-2i)\bar{\xi}_L ^i} u_0(z)dz\\&-\frac{x_1^2}{2(t-\bar{\tau}_L ^i)}-\frac{{\xi_L ^i}^2}{2 (\bar{\tau}_L ^i+(i-2)u)}-(2-i)(\bar{\tau}_L ^i-u)-\int_{0}^{(3-2i)\xi_L ^i} u_0(z)dz \\
    =& \frac{\bar{{\xi}_L ^i}^2}{2 (\bar{\tau}_L ^i+(i-2)u)}- \frac{{\xi_L ^i}^2}{2 (\bar{\tau}_L ^i+(i-2)u)} +\int_{(3-2i)\xi_L ^i}^{(3-2i)\bar{\xi}_L ^i} u_0(z)dz\leq0,~~\forall i. 
    \end{aligned}
\end{equation*}
Now since $\bar{\tau}_L ^i<\tau_L ^i,$ we have $$\frac{\bar{{\xi}_L ^i}^2-{\xi_L ^i}^2}{2 (\tau_L ^i+(i-2)u)}+\int_{(3-2i)\xi_L ^i}^{(3-2i)\bar{\xi}_L ^i} u_0(z)dz<\frac{\bar{{\xi}_L ^i}^2-{\xi_L ^i}^2}{2 (\bar{\tau}_L ^i+(i-2)u)} +\int_{(3-2i)\xi_L ^i}^{(3-2i)\bar{\xi}_L ^i} u_0(z)dz\leq0$$
which contradicts \eqref{4.4}. Hence, this contradiction leads to the fact that $\bar{\xi}_L ^i\leq \xi_L ^i.$
For the case $I=R$ we get the same steps as above and hence the result follows.

Now, since for $\xi\leq 0,U_L^3(x,t,\xi)=\frac{(x-\xi)^2}{2 t} + \int_{0}^{\xi} u_0(z)dz$ has minimum at $\xi=\xi_1$, we get the following \begin{equation*}
  \begin{split}
        &~~~~~U_L^3(x,t,\xi_1)-U_L^3(x,t,\xi_2)\\&=
        \frac{(x-\xi_1)^2}{2 t} + \int_{0}^{\xi_1} u_0(z)dz-\frac{(x-\xi_2)^2}{2 t} - \int_{0}^{\xi_2} u_0(z)dz\\&=\frac{x(\xi_2-\xi_1)}{t}+\frac{{\xi_1}^2-\xi_2^2}{2t}+\int_{\xi_2}^{\xi_1} u_0(z)dz\leq0
  \end{split}
\end{equation*} for any $\xi_2\le 0.$
Now we assume that $U_L^3(x_1,t,\xi)$ attains minimum for $\xi=\xi_2>\xi_1$, then we have \begin{equation*}
     \begin{split}
        &~~~~~U_L^3(x_1,t,\xi_2)-U_L^3(x_1,t,\xi_1)\\&=
        \frac{(x_1-\xi_2)^2}{2 t} + \int_{0}^{\xi_2} u_0(z)dz-\frac{(x_1-\xi_1)^2}{2 t} - \int_{0}^{\xi_1} u_0(z)dz\\&=\frac{x_1(\xi_1-\xi_2)}{t}+\frac{\xi_2^2-{\xi_1}^2}{2t}+\int_{\xi_1}^{\xi_2} u_0(z)dz\leq0.
  \end{split}
\end{equation*}
Adding the above two inequalities, we get \begin{equation*}
    \frac{(\xi_2-\xi_1)(x-x_1)}{t}\leq0
\end{equation*} which is a contradiction from our assumption. This contradiction leads to the fact that $\xi_2\leq \xi_1.$ Similarly, we can prove for the functional $U_R^3(x,t,\xi).$ This completes the proof of the lemma.
\end{proof}
\begin{theorem}\label{uniqueness}
For a.e. $(x,t)\in \mathbb{R}\times (0, \infty),$ the functionals $U_I^{i},$  where $i\in \{1,2,  3\}$ and $I\in \{L, R\}$  have unique mininimizers in their variables.
\end{theorem}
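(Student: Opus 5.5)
The plan is to follow the classical Lax--Ole\u{\i}nik argument: combine the monotonicity of minimizers from Lemma \ref{nonintersecting property} with a countability argument. I will treat each functional at a fixed time $t>0$ and then integrate over $t$ using Fubini.

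\textbf{Step 1: reduce $U_I^1$ and $U_I^2$ to minimization in $(\tau,\xi)$.}
First I will eliminate the auxiliary variable $u$, which enters only through explicit elementary terms.

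For $U_R^1$, the integrand $\frac{\xi^2}{2(\tau-u)}-u$ is decreasing in $u$. Hence the infimum over $u$ is approached at $u\to\tau$, and there the constraint forces $\xi\to 0$ (else the term blows up). This gives an explicit reduced function of $(\tau,\xi)$. Likewise, for $U_L^1$ and $U_L^2$ the $u$-dependence is monotone or absent, so the optimal $u$ is an explicit function of the remaining variables and is unique once $(\tau,\xi)$ is.

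After this reduction, each of the six functionals becomes either
\begin{itemize}
\item a minimization over $\xi\ge0$ of $\frac{(x\mp\xi)^2}{2t}+\int_0^{\pm\xi}u_0$, or
\item a minimization over $(\tau,\xi)$ of $\frac{x^2}{2(t-\tau)}+h(\tau,\xi)$, where $h$ does not depend on $x$.
\end{itemize}
Existence of minimizers follows from coercivity: $u_0$ is bounded, so $\int_0^{\xi}u_0$ grows at most linearly while $\xi^2$ grows quadratically, and $x^2/(t-\tau)\to\infty$ as $\tau\to t$ when $x\neq0$. The set of minimizers is therefore nonempty and compact.

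\textbf{Step 2: monotone selection of minimizers.}
Fix $t$. For each $x$, let $\xi_+(x)$ be the largest minimizer and $\xi_-(x)$ the smallest (for the two-variable functionals, use the corresponding extreme pairs $(\tau,\xi)$ in the order used in Lemma \ref{nonintersecting property}).

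Lemma \ref{nonintersecting property} says minimizers are ordered monotonically in $x$. Its argument (adding the two minimality inequalities to get $(\xi_2-\xi_1)(x-x_1)/t\le0$) applies to \emph{any} choice of minimizers, not just selected ones. So for $x_1<x$ in the $L$ case,
\[
\xi_+(x_1)\le \xi_-(x).
\]
Consequently the intervals $[\xi_-(x),\xi_+(x)]$ have pairwise disjoint interiors as $x$ varies. Each nondegenerate interval contains a distinct rational, so $\xi_-(x)<\xi_+(x)$ holds for at most countably many $x$. The same argument applies to $\tau$, using the line-crossing argument of Steps 1--2 of that lemma.

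Outside this countable set the minimizer is unique. Taking the union over $t$, the exceptional set is contained in a set whose every $t$-slice is countable, hence it has two-dimensional Lebesgue measure zero by Fubini. This gives the a.e. statement for all six functionals.

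\textbf{Main obstacle.}
The hard part is the two-variable case: the non-intersecting lemma as stated proves ordering only for minimizers assumed unique along segments, and the $\xi$-ordering argument depends on the $\tau$-ordering.

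I will first redo the proof so that it compares arbitrary minimizers. Step 1 of the lemma already shows that along the segment from $(x,t)$ to $(0,\tau)$ the minimizer becomes strictly unique (via the strict inequality $\frac{x^2}{(t-s)^2}<\frac{\bar x^2}{(\bar t-s)^2}$). So two distinct minimizer pairs at $(x,t)$ give two characteristic segments that cannot meet another point's characteristics.

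From there I expect to use the standard fact that the map $x\mapsto$ (set of $\tau$-minimizers) is monotone with disjoint fibres, and then apply the countability argument above to $\tau$ first and then to $\xi$ at fixed $\tau$.

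If the monotone comparison fails for some functional, a fallback is available. The value function $x\mapsto U_I^i(x,t)$ is a minimum of functions that are uniformly semiconcave in $x$ on compact subsets of $x\neq0$. So it is differentiable a.e., and at points of differentiability the derivative identity $\partial_x U=x/(t-\tau)$ (or $(x\mp\xi)/t$) pins down $\tau$, and then $\xi$, uniquely.
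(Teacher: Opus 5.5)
Your strategy is the paper's (take extreme minimizers, make them monotone in $x$ via Lemma~\ref{nonintersecting property}, conclude they coincide off a countable set in each time slice). It breaks exactly where the paper's does, in the $\xi$-component of the two-variable functionals.

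Your Step~1 already goes wrong there. The map $u\mapsto\frac{\xi^2}{2(\tau-u)}-u$ is not decreasing, since its derivative $\frac{\xi^2}{2(\tau-u)^2}-1$ changes sign, and the $u$-part of $U_L^1$ is not monotone either. Your description (infimum approached as $u\to\tau$, $\xi\to0$) would even put the minimum outside the admissible set, contradicting your existence claim. The correct reduction for $U_R^1$ uses $r=t-\tau$ and $s=\tau-u$, with $r,s>0$ and $r+s\le t$. The integrand then equals $\bigl(\frac{x^2}{2r}+r\bigr)+\bigl(\frac{\xi^2}{2s}+s\bigr)-t+\int_0^\xi u_0$, so for $x>0$
\[
\frac{x^2}{2(t-\tau)}+\frac{\xi^2}{2(\tau-u)}-u+\int_0^\xi u_0(z)\,dz\;\ge\;\sqrt{2}\,x-t+\psi(\xi),\qquad \psi(\xi):=\sqrt{2}\,\xi+\int_0^\xi u_0(z)\,dz,
\]
with equality iff $r=x/\sqrt{2}$ and $s=\xi/\sqrt{2}$. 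So $\tau$ is fixed by $x$, while $\xi$ is any minimizer of $\psi$, a problem in which $x$ does not occur. Your plan to run the countability argument ``for $\xi$ at fixed $\tau$'' therefore has nothing to work with: the $\xi$-argmin sets do not move with $x$, so there is no disjointness. Your semiconcavity fallback fails too, since $\partial_x U=x/(t-\tau)$ recovers only $\tau$.

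No argument can close this, because the statement is false for $U_R^1$. Take bounded $u_0$ on $(0,\infty)$ (e.g.\ $-3$ on two suitably placed intervals, $0$ elsewhere) such that $\psi$ has exactly two global minimizers $0<\xi_a<\xi_b$ with $\min\psi<0$. Then for every $(x,t)$ with $0<x<\sqrt{2}\,t-\xi_b$, both $\bigl(t-\frac{x}{\sqrt{2}},\,t-\frac{x+\xi_a}{\sqrt{2}},\,\xi_a\bigr)$ and the same triple with $\xi_b$ minimize $U_R^1$. This happens on an open set. Your coercivity claim also misses the boundary $u\to\tau$: with $u_0\equiv0$ the infimum $\sqrt{2}\,x-t$ is not attained at all. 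The paper's Step~3 of Lemma~\ref{nonintersecting property} fails for the same reason: it holds $u$ fixed and uses $\bar\tau<\tau$ to order the $\xi$'s.

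What your method does prove is the following.
\begin{enumerate}
\item A.e.\ uniqueness of $\tau$ for $U_I^1,U_I^2$. For this you need no line-crossing: adding the two minimality inequalities for $\frac{x^2}{2(t-\tau)}+h(\tau,\xi)$ gives $(x_a^2-x_b^2)\bigl(\frac{1}{t-\tau_a}-\frac{1}{t-\tau_b}\bigr)\le0$ for arbitrary minimizers.
\item A.e.\ uniqueness for $U_I^3$.
\item For $U_R^2$, $U_L^1$ (where the optimal $u$ is $0$) and $U_L^2$, uniqueness of $\xi$ from the first-order condition in $\tau$ at an interior $\tau$, e.g.\ $\frac{x^2}{(t-\tau)^2}=\frac{\xi^2}{\tau^2}+2$ for $U_R^2$.
\end{enumerate}
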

\begin{proof}
  Consider $U^1 _L (x, t, \tau, \xi).$  Let $\tau_{*} (x,t), \xi_{*}(x,t)$ denote the lowest point on the t-axis and the leftmost point on the x-axis, respectively, where $U^1 _L (x, t, \tau, \xi)$ attains minimum. Similarly, let $\tau^{*} (x,t), \xi^{*}(x,t)$ denote the highest point on the t-axis and the rightmost point on the x-axis, respectively, where $U^1 _L (x, t, \tau, \xi)$ attains a minimum. For fixed $t$, employing the lemma\eqref{nonintersecting property}, we find that $\xi^{*}(x,t)$, $ \xi_{*}(x,t)$ increase monotonically and  $\tau^{*} (x,t), \tau_{*} (x,t)$ decrease monotonically. So for a.e. $(x,t)\in \mathbb{R}\times (0, \infty),$ the functions $\xi^{*}(x,t)$, $ \xi_{*}(x,t)$, $\tau^{*} (x,t)$ and  $ \tau_{*} (x,t)$ are continuous. It is clear that at the point of continuity of  $\xi^{*}(x,t)$ and  $ \xi_{*}(x,t)$, we have $\xi^{*}(x,t)= \xi_{*}(x,t).$ Similarly, at the point of continuity of  $\tau^{*}(x,t)$ and  $ \tau_{*}(x,t)$, we have $\tau^{*}(x,t)= \tau_{*}(x,t).$  This proves that for a.e. $(x,t)\in \mathbb{R}\times (0, \infty),$ the minimizer $\tau$ and $\xi$ is unique for $U^1 _L (x, t, \tau, \xi).$  Similarly, we can handle the other functionals $U_I^{i},$  where $i\in \{1,2,  3\}$ and $I\in \{L, R\}$. This completes the proof.
\end{proof}
\begin{lemma}
    Given any $t>0,$ there exist $x_1(t)$ and $x_2(t)$ such that $0\leq x_2(t)\leq x_1(t)$ and \begin{equation*}
        U_R(x,t)=\begin{cases}
              U_R^2(x,t)+t,& 0\leq x\leq x_2(t)\\
              U_R^1(x,t)+t,& x_2(t)\leq x\leq x_1(t)\\
              U_R^3(x,t)+t,&x\geq x_1(t).
        \end{cases}
    \end{equation*}
\end{lemma}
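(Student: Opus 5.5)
The plan is to compare the three functionals pairwise along the ray $x\ge 0$ for fixed $t$. The tool is the structure of each minimum as a function of $x$. In $U_R^1$ and $U_R^2$ the variable $x$ enters only through $\frac{x^2}{2(t-\tau)}$, where $\tau$ is the time at which the ``characteristic'' leaves the boundary. In $U_R^3$ it enters through $\frac{(x-\xi)^2}{2t}$.

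\textbf{Step 1.} Rewrite each functional in a form linear in a convenient quantity. Since $\frac{(x-\xi)^2}{2t}=\frac{x^2}{2t}-\frac{x\xi}{t}+\frac{\xi^2}{2t}$, we have
\[
U_R^3(x,t)-\frac{x^2}{2t}=\min_{\xi\ge 0}\Big[-\frac{x\xi}{t}+\frac{\xi^2}{2t}+\int_0^\xi u_0\Big].
\]
This is concave in $x$ with slope $-\xi/t$.

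For $i=1,2$, set $s=\frac{1}{t-\tau}$ and write
\[
U_R^i(x,t)=\min_\tau\Big[\frac{x^2}{2(t-\tau)}+V^i(\tau)\Big],
\]
where $V^i(\tau)$ is the minimum of the remaining terms over $u,\xi$. Then $U_R^i$ is a minimum of functions that are affine in $x^2$, with slope $\frac{1}{2(t-\tau)}$.

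\textbf{Step 2.} Monotonicity of minimizers. By Lemma~\ref{nonintersecting property}(2),(4) and Theorem~\ref{uniqueness}, as $x$ increases the minimizing $\tau_R^i(x)$ and $\xi_R^3(x)$ are monotone. For a.e.\ $x$ the derivatives are
\[
\partial_{x^2}U_R^i=\frac{1}{2(t-\tau_R^i)},\qquad \partial_x U_R^3=\frac{x-\xi_R^3}{t}.
\]

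\textbf{Step 3.} Define the switching points. Put
\[
x_1(t)=\inf\{x\ge 0:\ U_R^3(x,t)\le\min(U_R^1,U_R^2)(x,t)\},\qquad x_2(t)=\inf\{x\ge 0:\ U_R^1(x,t)\le U_R^2(x,t)\}.
\]
The claim then reduces to a single-crossing property for each pair. For instance, if $D(x)=U_R^3-\min(U_R^1,U_R^2)$ satisfies $D(x_0)\le 0$, then $D(x)\le0$ for all $x>x_0$. Similarly, $U_R^1-U_R^2$ changes sign at most once, from positive to negative.

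For the pair involving $U_R^3$, a competitor from $U_R^1$ or $U_R^2$ with time $\tau$ costs $\frac{x^2}{2(t-\tau)}$. This grows strictly faster in $x$ than the direct cost $\frac{(x-\xi)^2}{2t}$ with $\xi\ge0$: a path forced through the origin at time $\tau>0$ has to travel distance $x$ in time $t-\tau<t$. Concretely, I would compare the a.e.\ derivatives $\frac{x}{t-\tau}$ versus $\frac{x-\xi}{t}$. The inequality $\frac{x}{t-\tau}\ge\frac{x-\xi}{t}$ always holds, so $D$ is nonincreasing and crosses zero at most once.

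For the pair $U_R^1,U_R^2$, I would use the non-crossing argument of Lemma~\ref{nonintersecting property}, Step~1. If $U_R^1\le U_R^2$ at some $x$ with minimizer time $\tau^1$, then along the segment from $(0,\tau^1)$ to $(x,t)$ the same inequality persists. To transfer this to larger $x$ at fixed $t$, I would compare the times: I expect $\tau^1\le\tau^2$, since the $U_R^2$ path gains the extra $-\tau$ by staying longer at the source. This should give a derivative comparison $\partial_x U_R^1\le\partial_x U_R^2$.

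Finally, $0\le x_2\le x_1$ follows by construction once the single-crossing properties are established. Endpoint cases are handled by allowing $x_2=0$ or $x_1=x_2$.

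\textbf{Main obstacle.} The delicate step is the ordering $\tau_R^1\le\tau_R^2$ between minimizers of \emph{different} functionals, which is what gives single crossing of $U_R^1$ and $U_R^2$. Lemma~\ref{nonintersecting property} only compares minimizers of the same functional. I would first try a direct exchange argument: take minimizers of both, swap their $\tau$-components, and add the two resulting minimality inequalities, as in part (3) of that lemma. The goal is an inequality of the form $(\tau^2-\tau^1)\big(\frac{1}{t-\tau^1}-\frac{1}{t-\tau^2}\big)x^2\ge0$.
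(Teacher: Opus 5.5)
Your treatment of $U_R^3$ is correct and takes a different route from the paper's. The paper shows that minimality of $U_R^3$ propagates backward along the segment from $(x,t)$ to its minimizer $(y_1,0)$; your monotonicity argument is cleaner. It does not even need a.e.\ derivatives: for $0\le x<x'$, testing with (near-)minimizers gives $U_R^i(x',t)-U_R^i(x,t)\ge\frac{x'^2-x^2}{2t}\ge U_R^3(x',t)-U_R^3(x,t)$, so $U_R^3-\min(U_R^1,U_R^2)$ is nonincreasing.

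The gap is in the $U_R^1$/$U_R^2$ part. Both claims you rely on are false in general: that $U_R^1-U_R^2$ changes sign at most once, from positive to negative, and that $\tau_R^1\le\tau_R^2$. Write $U_R^i(x,t)=\min_{\tau}\big[\frac{x^2}{2(t-\tau)}+A^i(\tau)-\tau\big]$, where $A^1(\tau)=\inf_{0<s\le\tau}\big(s+\min_{\xi\ge0}[\frac{\xi^2}{2s}+\int_0^\xi u_0]\big)$ and $A^2(\tau)=\min_{\xi\ge0}[\frac{\xi^2}{2\tau}+\int_0^{-\xi}u_0]$. Swapping the $\tau$-components of the two minimizers at the same $x$ cancels the $x^2$-terms exactly. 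It leaves only $(A^1-A^2)(\tau^1)\le(A^1-A^2)(\tau^2)$, which orders nothing, because $A^1-A^2$ is not monotone.

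Here is a concrete counterexample. Let $u_0=10$ on $(-\frac12,0)$ and $u_0=0$ on $(-\infty,-\frac12)\cup(0,10)$. Put a thin, very negative layer just after $10$ so that $\int_0^\xi u_0\approx-30$ for $\xi>10$. Then $A^2(\tau)=\frac{1}{8\tau}-5$ for $\tau\ge\frac1{20}$. Meanwhile $A^1=0$ up to $\tau\approx1.77$, then $A^1(\tau)=\tau+\frac{50}{\tau}-30$ up to $\sqrt{50}$, and $A^1$ is constant afterwards. At $t=12$:
\begin{itemize}
\item near the boundary, $U_R^1(0^+,12)=2\sqrt{50}-42\approx-27.9<U_R^2(0^+,12)\approx-17.0$;
\item at $x=30$, $U_R^1(30,12)=\frac{110}{3}$ (at $\tau^1=3$) exceeds $U_R^2(30,12)\approx33.5$ (at $\tau^2\approx0.24$).
\end{itemize}
So $U_R^1-U_R^2$ goes from negative to positive, with $\tau^1>\tau^2$. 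The lemma survives only because $U_R^3(30,12)=-30$.

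Hence the order ``$U_R^2$ before $U_R^1$'' holds only on $[0,x_1(t)]$, and any proof of it must use $U_R^3$. One way to do it is the following.
\begin{itemize}
\item \emph{Setup.} Suppose $U_R^1<U_R^2$ at $x_b$, realized by a right path from $(\xi_b,0)$ that hits the boundary at time $s$ and waits until $\tau_b$. Suppose also that $U_R^2$ is the overall minimum at $x_a>x_b$, with boundary time $\tau_a$.
\item \emph{Ordering of boundary times.} Your exchange argument does work for the single functional $\min(U_R^1,U_R^2)$ at the two points $x_b<x_a$, and it gives $\tau_a\le\tau_b$.
\item \emph{Case $s\le\tau_a$.} The right path could instead leave the boundary at $\tau_a$, so $A^1(\tau_a)=A^1(\tau_b)$. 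Since $U_R^2\le U_R^1$ at $x_a$, we have $A^2(\tau_b)\le A^2(\tau_a)\le A^1(\tau_a)=A^1(\tau_b)$. This gives $U_R^2\le U_R^1$ at $x_b$, a contradiction.
\item \emph{Case $s>\tau_a$.} The segment from $(\xi_b,0)$ to $(0,s)$ crosses the segment from $(0,\tau_a)$ to $(x_a,t)$. If the left path is cheaper at the crossing point, then, since waiting on the boundary is free, $A^2(\tau_b)\le A^2(\tau_a)<A^1(\tau_b)$, again a contradiction. If the right path is cheaper there, then a broken, and hence a straight, right-half path beats $U_R^2$ at $(x_a,t)$. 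That is, $U_R^3(x_a,t)<U_R^2(x_a,t)$, contradicting minimality of $U_R^2$ at $x_a$.
\end{itemize}
The paper's own proof gives no argument for $x_2(t)$, so this step has to be supplied, not borrowed.
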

\begin{proof}
     \textbf{Step-1:} $$\min \{U_R^1(0,t),U_R^2(0,t),U_R^3(0,t)\}=\min \{U_R^1(0,t),U_R^2(0,t)\}$$ since $$U_R^ 1(0, t)=\min_{\substack{0 \le \tau < t \\ 0 \le u < \tau \\ \xi \ge 0}}\Big[ U_R^1(0,t,\tau,\xi,u) \Big],U_R^3(0,t)= \min_{\xi \geq 0}\Big[U_R^3(0,t,\xi)\Big],$$ where $U_R^1(x,t,\tau,\xi,u),U_R^3(x,t,\xi)$ are defined  in \eqref{notation}, so at $u=0, \tau=t,$ we have $U_R^1(0,t,\tau,\xi,u)$ is same as the functional $U_R^3(0,t,\xi)$ which implies $$U_R^1(0,t)\leq U_R^3(0,t).$$ Similarly, we can prove $U_R^2(0,t)\leq U_R^3(0,t).$

    \noindent
    \textbf{Step-2:} Let $$\sup\{y_1:\min[U_R^1(x,t),U_R^2(x,t),U_R^3(x,t)]\}=\min[U_R^1(x,t),U_R^2(x,t)] ~~\forall x \in [0,y_1]\} =x_1(t)$$ for some $x_1(t)\geq 0.$ We claim that
    $$\forall x>x_1(t),~\min[U_R^1(x,t),U_R^2(x,t),U_R^3(x,t)]=U_R^3(x,t)$$. Let  $U_R^3(x,t,y)$ attains minimum at $y=y_1$. Also, let $(\bar{x},\bar{t})$ be any point on the line segment joining $(x,t)$ and $(y_1,0)$. We will show that at $(\bar{x},\bar{t}),$$$\min[U_R^1(x,t),U_R^2(x,t),U_R^3(x,t)]=U_R^3(x,t).$$ On the contrary, suppose that for some $\tau_1,\xi_1,u_1$ the following holds.
   \begin{equation*}
       \frac{\bar{x}^2}{2(\bar{t}-\tau_1)}+\frac{\xi_1 ^2}{2 (\tau_1-u_1)}-u_1 +\int_{0}^{\xi_1} u_0(z)dz\leq\frac{(\bar{x}-y_1)^2}{2\bar{t}} + \int_{0}^{y_1} u_0(z)dz.
   \end{equation*}
Now, \begin{equation*}
\begin{aligned}
     &\frac{x^2}{2(t-\tau_1)}+\frac{\xi_1 ^2}{2 (\tau_1-u_1)}-u_1 +\int_{0}^{\xi_1} u_0(z)dz-\frac{(x-y_1)^2}{2t} -\int_{0}^{y_1} u_0(z)dz\\
     =&\frac{x^2}{2(t-\tau_1)}-\frac{(x-y_1)^2}{2t} +\frac{\xi_1 ^2}{2 (\tau_1-u_1)}-u_1 +\int_{y_1}^{\xi_1} u_0(z)dz
\end{aligned}
\end{equation*}
So, it is enough to verify that 
\begin{equation}\label{inequality 1}
    \frac{x^2}{2(t-\tau_1)}-\frac{(x-y_1)^2}{2t}\leq \frac{\bar{x}^2}{2(\bar{t}-\tau_1)}-\frac{(\bar{x}-y_1)^2}{2\bar{t}}.
\end{equation}
The above holds if \begin{equation}\label{inequality 2}
    \frac{x^2}{t-\tau_1}-\frac{(x-y_1)^2}{t}\leq \frac{\bar{x}^2}{\bar{t}-\tau_1}-\frac{(x-y_1)^2}{t^2}\bar{t}.
\end{equation}
Since the slopes of the two lines joining $(x,t), (y_1,0)$ and $(\bar{x},\bar{t}),(y_1,0)$ are equal, we get $\frac{x-y_1}{t}=\frac{\bar{x}-y_1}{\bar{t}}.$ Employing this equality in \eqref{inequality 1} we get \eqref{inequality 2}.
Again, the above holds if $$\frac{x^2}{t-\tau_1}-  \frac{\bar{x}^2}{\bar{t}-\tau_1}\leq \frac{(x-y_1)^2}{t}\Big[1-\frac{\bar{t}}{t}\Big]$$
i.e. \begin{equation}\label{c1}
    \frac{x^2}{t-\tau_1} \leq \Big(\frac{x-y_1}{t}\Big)^2(t-\bar{t})+(\bar{t}-\tau_1)\Big(\frac{\bar{x}}{\bar{t}-\tau_1}\Big)^2
\end{equation}

 Now, we have \begin{equation}\label{c2}
  \begin{aligned}
         &\Big(\frac{x-y_1}{t}\Big)^2(t-\bar{t})+(\bar{t}-\tau_1)\Big(\frac{\bar{x}}{\bar{t}-\tau_1}\Big)^2\\=&(t-\tau_1)\Big[\Big(\frac{x-y_1}{t}\Big)^2\frac{t-\bar{t}}{t-\tau_1}+\frac{\bar{t}-\tau_1}{t-\tau_1}\Big(\frac{\bar{x}}{\bar{t}-\tau_1}\Big)^2\Big]\\ >&(t-\tau_1)\Big[\frac{x-\bar{x}}{t-\bar{t}}\times \frac{t-\bar{t}}{t-\tau_1}+\frac{\bar{t}-\tau_1}{t-\tau_1}\times\frac{\bar{x}}{\bar{t}-\tau_1}\Big]^2\\&=(t-\tau_1)\Big[\frac{x}{t-\tau_1}\Big]^2=\frac{x^2}{t-\tau_1}
  \end{aligned}
 \end{equation}
where the third line follows from the equality of slopes of the lines joining $(x,t),(y_1,0)$ and $(x,t),(\bar{x},\bar{t})$ and the property of convex function.
Therefore, clearly \eqref{c2} contradicts \eqref{c1}. Hence our claim follows.

Therefore there exist $x_1(t),x_2(t)>0$ such that $U_R(x,t)=U_R^2(x,t)+t,~~~\forall x\in(0,x_2(t)),$ $U_R(x,t)=U_R^1(x,t)+t,~~~\forall x\in(x_2(t),x_1(t)),$
$U_R(x,t)=U_R^3(x,t)+t,~~~\forall x>x_1(t),$
and for a fixed $t.$ Hence the lemma follows.
\end{proof}
Similarly for $U_L(x,t)$ we can prove the following lemma.
\begin{lemma}
      Given any $t>0,$ there exist $y_1(t)$ and $y_2(t)$ such that $y_1(t)\leq y_2(t)\leq 0$ and \begin{equation*}
        U_L(x,t)=\begin{cases}
              U_L^1(x,t),& 0\geq x\geq y_2(t)\\
              U_L^2(x,t),& y_2(t)\geq x\geq y_1(t)\\
              U_L^3(x,t),& x\leq y_1(t).
        \end{cases}
    \end{equation*}
\end{lemma}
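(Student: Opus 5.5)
The plan is to follow the argument for $U_R$ step by step, swapping the roles of the three functionals and working on the negative half-line.

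\textbf{Step 1: the boundary $x=0$.} First I show that at $x=0$ the minimum $\min\{U_L^1,U_L^2,U_L^3\}$ is attained by $U_L^1$ or $U_L^2$. The functional $U_L^3(0,t,\xi)=\frac{\xi^2}{2t}+\int_0^{-\xi}u_0$ arises as a degenerate limit of the $U_L^2$ functional. Taking $\tau\to 0$ in $\frac{\xi^2}{2\tau}+\int_0^{-\xi}u_0$ does not give this directly. Instead I would take $\tau$ close to $t$, where the $x^2/(2(t-\tau))$ term vanishes at $x=0$ and the remaining terms reproduce $U_L^3(0,t,\xi)$. This yields $U_L^2(0,t)\le U_L^3(0,t)$, and similarly $U_L^1(0,t)\le U_L^3(0,t)$ (up to the harmless $\tau-u\ge0$ term, handled by choosing $u$ near $\tau$). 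Hence $U_L^3$ is not strictly the minimum at $x=0$.

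\textbf{Step 2: where $U_L^3$ wins.} Define
\[
y_1(t)=\inf\{y\le 0:\ \min(U_L^1,U_L^2,U_L^3)(x,t)=\min(U_L^1,U_L^2)(x,t)\ \forall x\in[y,0]\}.
\]
The claim is that for $x<y_1(t)$ the minimum is $U_L^3$. I would prove that the region where $U_L^3$ is minimal is stable along its backward characteristic. Fix a point where $U_L^3$ attains its minimum at $\xi=y$, and take $(\bar x,\bar t)$ on the segment joining $(x,t)$ to $(-y,0)$. Suppose some $(\tau_1,\xi_1,u_1)$ gives a value of $U_L^{1}$ (or $U_L^2$) at $(\bar x,\bar t)$ no larger than $U_L^3$. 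As in the $R$ case, I reduce this to the inequality
\[
\frac{x^2}{2(t-\tau_1)}-\frac{(x+y)^2}{2t}\le \frac{\bar x^2}{2(\bar t-\tau_1)}-\frac{(\bar x+y)^2}{2\bar t}.
\]
This inequality would then transfer the competitor back to $(x,t)$. I would derive a contradiction from the convexity estimate \eqref{c2}, using equal slopes $\frac{x+y}{t}=\frac{\bar x+y}{\bar t}$ and Jensen's inequality for $s\mapsto s^2$. The additive terms $\tau-u$ (for $U_L^1$) and the $\xi$-terms are independent of $(x,t)$, so they cancel exactly as in the $R$ case. Combined with Step 1 and the monotonicity of minimizers (Lemma \ref{nonintersecting property}, parts (1) and (3)), this shows that $\{x<0: U_L^3 \text{ is minimal}\}$ is an interval $(-\infty,y_1(t)]$.

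\textbf{Step 3: ordering of $U_L^1$ and $U_L^2$.} Inside $[y_1(t),0]$, I need $U_L^1$ to be minimal near $0$ and $U_L^2$ to be minimal further left. I would run the same characteristic argument, now comparing $U_L^2$ against $U_L^1$ along the segment from $(x,t)$ to the exit point $(0,\tau_L^2)$. Both functionals share the term $\frac{x^2}{2(t-\tau)}$, so the comparison reduces to monotonicity in $\tau$. By Lemma \ref{nonintersecting property}(1), $\tau_L^i$ decreases as $x$ decreases. I would then show that $U_L^1-U_L^2$ is monotone in $x$ on $[y_1,0]$ via a Lipschitz/derivative argument. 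At a.e. point, $\partial_x U_L^i=\frac{x}{t-\tau_L^i}$ by Theorem \ref{uniqueness}, so the sign of the derivative of the difference is governed by comparing $\tau_L^1$ and $\tau_L^2$. This yields the switch point $y_2(t)$.

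The main obstacle is Step 3. Steps 1 and 2 are routine transcriptions of the $R$ proof, but establishing that $U_L^1$ and $U_L^2$ switch only once requires comparing their minimizing $\tau$'s, and I expect that to be the delicate part.
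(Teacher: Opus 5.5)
Your Steps 1--2 follow the paper's argument: the paper proves this lemma only by ``similarly'' from the $U_R$ case, whose proof consists of the boundary comparison and the backward-characteristic estimate. Step 3, which the paper does not supply either, is a genuine gap, and it cannot be closed as you set it up.

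Nothing orders $\tau_L^1$ against $\tau_L^2$ at the same point. Lemma \ref{nonintersecting property} compares minimizers of \emph{one} functional at two points $x_1<x$, so the sign of $\frac{x}{t-\tau_L^1}-\frac{x}{t-\tau_L^2}$ is uncontrolled. More fundamentally, there is no $U_L^2$-phase to find. For $x\le 0$ and $0<\tau<t$, Cauchy--Schwarz gives
\[
\frac{x^2}{2(t-\tau)}+\frac{\xi^2}{2\tau}\ \ge\ \frac{(|x|+\xi)^2}{2t}\ \ge\ \frac{(x+\xi)^2}{2t}.
\]
Since $U_L^2$ and $U_L^3$ carry the same term $\int_0^{-\xi}u_0$, this gives $U_L^2\ge U_L^3$ on $\{x\le 0\}$. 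In other words, the broken path $(-\xi,0)\to(0,\tau)\to(x,t)$ never beats the straight one, because the potential $H(x)$ lives only on the right. Thus $U_L=\min(U_L^1,U_L^3)$, and $U_L^2$ is minimal only where it ties with $U_L^3$.

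Read literally, the statement demands $U_L=U_L^2$ on the nonempty set $[y_1,y_2]$, and this can fail. Take $u_0=-M$ on $(0,\infty)$ and $u_0=0$ on $(-\infty,0)$, with $M^2>2$. Then $U_L^3\equiv 0$ and $U_L^2=\frac{x^2}{2t}$. Taking $u=0$, $\xi=M\tau$ gives $U_L^1(x,t)\le \frac{x^2}{2(t-\tau)}-(\frac{M^2}{2}-1)\tau$. Hence $U_L^2>U_L$ at every point of $(-\infty,0]$.

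What is provable is the two-phase version, and your derivative idea proves it once you apply it to the pair $U_L^1$, $U_L^3$ instead. At a.e. $x<0$,
\[
\partial_x U_L^1=\frac{x}{t-\bar\tau_1}\le \frac{x}{t}\le \frac{x+\xi^*}{t}=\partial_x U_L^3 ,
\]
where $\xi^*\ge 0$ is a minimizer for $U_L^3$. Both functions are locally Lipschitz on $(-\infty,0)$: boundedness of $u_0$ keeps the relevant minimizers in compact sets, and in particular keeps $\bar\tau_1$ away from $t$. Hence $U_L^1-U_L^3$ is nonincreasing in $x$. This gives a single switch point $y_1(t)$ with $U_L=U_L^1$ on $[y_1,0)$ and $U_L=U_L^3$ on $(-\infty,y_1]$. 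That is the lemma with $y_2=y_1$, except at the single point $y_1$ itself.

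The same argument repairs the end of your Step 2. Backward stability along one $U_L^3$ characteristic, plus Lemma \ref{nonintersecting property}(1),(3) (again single-functional statements), does not by itself give the interval structure. You would additionally need a crossing argument against a $U_L^1$ characteristic issuing from $(0,\tau')$.

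Finally, the side claim in Step 1 that $U_L^1(0,t)\le U_L^3(0,t)$ is false, because the two functionals use $u_0$ on opposite sides of the interface. For example, $u_0=0$ on the right and $u_0=M$ on the left give $U_L^1(0,t)=0>-\frac{M^2t}{2}=U_L^3(0,t)$. You do not need it, since $U_L^2(0,t)\le U_L^3(0,t)$ already suffices for Step 1.
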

\begin{lemma}
\label{derivative of U_R}
    The spatial derivative of $U_R(x,t)$ for a.e. $(x,t)\in (0,\infty)\times (0,\infty)$ is as follows.
    \begin{equation*}
        U_{R_x}(x,t)=\begin{cases}
               \frac{x}{t-\tau_2(x,t)},&0\leq x\leq x_2(t)\\
               \frac{x}{t-\tau_1(x,t)},&x_2(t)\leq x\leq x_1(t)\\
             \frac{x-\xi_1(x,t)}{t},&x\geq x_1(t).
        \end{cases}
    \end{equation*} where $U_R^i(x,t,\tau,\xi,u),U_R^3(x,t,\tau,\xi)$ achieve minimum at $\tau_i(x,t), \xi_1(x,t)$  for $i=1,2$ respectively.
    Similarly, the spatial derivative of $U_L(x,t)$ w.r.t. $x$ is given by
     \begin{equation*}
        U_{L_x}(x,t)=\begin{cases}
             \frac{x}{t-\bar{\tau}_1(x,t)} ,& 0\geq x\geq y_2(t)\\
               \frac{x}{t-\bar{\tau}_2(x,t)},& y_2(t)\geq x\geq y_1(t)\\
             \frac{x-\bar{\xi}_1(x,t)}{t} ,& x\leq y_1(t)
        \end{cases}
    \end{equation*}  where $U_L^i(x,t,\tau,\xi,u),U_L^3(x,t,\tau,\xi)$ achieve minimum at $\bar{\tau}_i(x,t), \bar{\xi}_1(x,t)$  for $i=1,2$ respectively.
\end{lemma}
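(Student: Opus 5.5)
The plan is the standard Lax--Ole\u{\i}nik/Danskin argument for a value function defined as a minimum. By the two preceding lemmas, for fixed $t$ the half-line $(0,\infty)$ splits into the intervals $[0,x_2(t)]$, $[x_2(t),x_1(t)]$ and $[x_1(t),\infty)$. On these, $U_R-t$ coincides with $U_R^2$, $U_R^1$ and $U_R^3$ respectively. So it suffices to differentiate each $U_R^i(\cdot,t)$ in the interior of its interval, at points where the minimizer is unique. By Theorem \ref{uniqueness} this holds for a.e. $(x,t)$. The endpoints $x_1(t),x_2(t)$ form a null set in $(x,t)$ and can be discarded.

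The key step is to show that each $U_R^i(\cdot,t)$ is locally Lipschitz, hence differentiable a.e., with derivative given by the envelope formula. Fix $x$ where the minimizer $(\tau_1,\xi_1,u_1)$ of $U_R^1(x,t,\cdot)$ is unique. For any $h$, testing the minimization at $x+h$ with the frozen minimizer gives
\[
U_R^1(x+h,t)-U_R^1(x,t)\le \frac{(x+h)^2-x^2}{2(t-\tau_1)} .
\]
This is because only the term $\frac{x^2}{2(t-\tau)}$ depends on $x$. Conversely, let $(\tau^h,\xi^h,u^h)$ be a minimizer at $x+h$. Then
\[
U_R^1(x+h,t)-U_R^1(x,t)\ge \frac{(x+h)^2-x^2}{2(t-\tau^h)} .
\]
Dividing by $h$ and letting $h\to0^\pm$ gives the one-sided derivatives. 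Everything then reduces to the continuity $\tau^h\to\tau_1$, which I would derive from two facts. First, Lemma \ref{nonintersecting property} gives monotonicity of $\tau^h$ in $h$. Second, compactness and uniqueness of the minimizer force every limit point of $\tau^h$ to be a minimizer at $x$, hence equal to $\tau_1$. The same argument yields $U_{R_x}=x/(t-\tau_2)$ on $[0,x_2(t)]$. For $U_R^3$, the $x$-dependent term is $(x-\xi)^2/(2t)$, with derivative $(x-\xi_1)/t$.

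The expected obstacle is compactness. One must keep $\tau^h$ bounded away from $t$ so that $x^2/(2(t-\tau))$ stays controlled, and keep $\xi^h$ bounded. For $x>0$ the term $x^2/(2(t-\tau))\to\infty$ as $\tau\to t$, while the other terms are bounded below. This holds assuming $u_0\in L^\infty$, since then $\int_0^{\pm\xi}u_0\ge -\|u_0\|_\infty\xi$ is dominated by $\xi^2/(2\tau)$. This should give a uniform lower bound on $t-\tau^h$ for $h$ small. If needed, I would first prove the lemma for $x$ in a compact subset of $(0,\infty)$ and exhaust. The left-half statement for $U_{L_x}$ follows identically using the lemma describing $U_L$ via $y_1(t),y_2(t)$ and the $L$-parts of Lemma \ref{nonintersecting property}. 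The only change is that the $U_L^3$ term is $(x+\xi)^2/(2t)$, which gives $(x+\bar\xi_1)/t$ in the last case.
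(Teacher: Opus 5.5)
Your proposal is correct and follows the paper's route: freeze a minimizer to bound the difference quotient of each $U_R^i(\cdot,t)$ from both sides, then pass to the limit using uniqueness. Your monotonicity/compactness argument for $\tau^h\to\tau_1$ spells out what the paper compresses into ``follows from the uniqueness of $\tau_1(x,t)$''. In that step, limit points for $U_R^1$ and $U_R^2$ can also escape to the edges $u=\tau$ resp.\ $\tau=0$ of the parameter set, so uniqueness should be understood for the lower semicontinuous extension of the functional to those edges.

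Your $(x+\bar\xi_1)/t$ in the last case of $U_{L_x}$ agrees with the stated $(x-\bar\xi_1)/t$ once $\bar\xi_1$ is read as the nonpositive foot point. That is the parametrization the paper itself uses for $U_L^3$ in the proof of Lemma~\ref{nonintersecting property}.
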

\begin{proof}
     We find the derivative of $U_R^1(x,t)$ w.r.t. $x.$ 
Let $x\in (y_1,y_2)$ and $U_R^1(x,t)$ achieves minimum at a unique $\tau_1(x,t).$ Then for $x_1,x_2\in(y_1,y_2), x_1<x<x_2$ we have 
\begin{equation*}
    \begin{aligned}
       & U_R^1(x_2,t)-U_R^1(x_1,t)\\\leq &  \frac{x_2^2}{2(t-\tau_1(x_1,t))}+\frac{\xi_1(x_1,t) ^2}{2 (\tau_1(x_1,t)-u_1(x_1,t))}-u_1(x_1,t) +\int_{0}^{\xi_1(x_1,t)} u_0(z)dz\\&-\frac{x_1^2}{2(t-\tau_1(x_1,t))}-\frac{\xi_1(x_1,t) ^2}{2 (\tau_1(x_1,t)-u_1(x_1,t))}+u_1(x_1,t) -\int_{0}^{\xi_1(x_1,t)} u_0(z)dz\\=&\frac{x_2^2-x_1^2}{2(t-\tau_1(x_1,t))}
    \end{aligned}
\end{equation*}
which implies \begin{equation*}
\frac{ U_R^1(x_2,t)-U_R^1(x_1,t)}{x_2-x_1}\leq \frac{x_2+x_1}{2(t-\tau_1(x_1,t))}
\end{equation*}
Taking limit as $x_1,x_2 \to x,$ we get \begin{equation}\label{derivative 1}
    \displaystyle{\lim _{x_1,x_2 \to x}\frac{ U_R^1(x_2,t)-U_R^1(x_1,t)}{x_2-x_1}}\leq \frac{x}{t-\tau_1(x,t)}.
\end{equation}
The last line follows from the uniqueness of $\tau_1(x,t).$
Similarly we can prove that \begin{equation}\label{derivative 2}
    \displaystyle{\lim _{x_1,x_2 \to x}\frac{ U_R^1(x_2,t)-U_R^1(x_1,t)}{x_2-x_1}}\geq \frac{x}{t-\tau_1(x,t)}.
\end{equation}
Therefore from \eqref{derivative 1} and \eqref{derivative 2} we get \begin{equation*}
    \frac{\partial}{\partial x}\Big(U_R^1(x,t)\Big)=\frac{x}{t-\tau_1(x,t)}.
\end{equation*} 
Similarly, we can calculate $  \frac{\partial}{\partial x}\Big(U_R^2(x,t)\Big)=\frac{x}{t-\tau_2(x,t)}$ and $  \frac{\partial}{\partial x}\Big(U_R^3(x,t)\Big)=\frac{x-\xi_1(x,t)}{t}$ where $U_R^2(x,t),U_R^3(x,t)$ achieve minimum at $\tau_2(x,t), \xi_1(x,t)$ respectively.
Similarly, we can also prove for $ U_{L_x}(x,t)$.
\end{proof}
\begin{theorem}\label{solution of inviscid equation}
    The vanishing viscosity limit of the solution of the viscous equation \eqref{viscous equation}  is a weak (distributional) solution of the inviscid equation \eqref{inviscid equation}, where $u(x,t)$ is given by \begin{equation*}
        u(x,t)=\begin{cases}
            \frac{\partial}{\partial x}U_R,&x>0\\
             \frac{\partial}{\partial x}U_L,&x<0.
        \end{cases}
    \end{equation*}
  The explicit formula for $\frac{\partial}{\partial x}U_R$ and $\frac{\partial}{\partial x}U_L$ are given in Lemma\eqref{derivative of U_R}. 
\end{theorem}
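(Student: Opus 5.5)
We will deduce the statement by passing to the limit in the integrated (potential) form of the viscous problem rather than in \eqref{weak formulation viscous problem} directly. We set $U^{\epsilon}:=-2\epsilon\log\theta^{\epsilon}$, so that $U^{\epsilon}_x=u^{\epsilon}$ on each side of $x=0$. By Theorem \ref{vanishing viscosity limit} we have $U^{\epsilon}\to U$ pointwise a.e., with $U=U_R$ for $x>0$ and $U=U_L$ for $x<0$.

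\textbf{Step 1: convergence of $u^{\epsilon}$ in $\mathcal{D}'$.} To upgrade pointwise convergence to distributional convergence we need local uniform bounds on $U^{\epsilon}$ in order to apply dominated convergence. These bounds come from the same upper and lower estimates used in Step-1 and Step-2 of the proof of Theorem \ref{vanishing viscosity limit}: on compact subsets of $\{x\neq 0,\ t>0\}$, $\epsilon\log I_i^{\epsilon}$ is bounded uniformly in $\epsilon$, because the minimized functionals are continuous and bounded there. Then
\[
\int\!\!\int u^{\epsilon}\phi\,dx\,dt=-\int\!\!\int U^{\epsilon}\phi_x\,dx\,dt+\int\bigl[U^{\epsilon}(0^-,t)-U^{\epsilon}(0^+,t)\bigr]\phi(0,t)\,dt .
\]
Since $\theta^{\epsilon}$ is continuous across $x=0$ (we have $R^{\epsilon}(0,t)=L^{\epsilon}(0,t)=g(t)$), the boundary term vanishes. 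Hence $u^{\epsilon}\to U_x=u$ in $\mathcal{D}'$, and Lemma \ref{derivative of U_R} identifies $u$ by the stated explicit formula.

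\textbf{Step 2: Hamilton--Jacobi equation for $U$.} Next we show that $U$ satisfies, a.e., $U_t+\tfrac12 U_x^2=-H(x)$ in each quarter plane, with $U$ continuous across $x=0$. The $-H(x)$ is the inviscid trace of the term $-H\theta/(2\epsilon)$ in \eqref{reduced IVP}, consistent with the additive $+t$ in $U_R$. The derivation proceeds as in Lemma \ref{derivative of U_R}. At points of uniqueness of the minimizer (Theorem \ref{uniqueness}), one-sided difference quotients in $t$ of the value functions $U_R^i$, $U_L^i$ give, for example, $\partial_t U_R^1=-\tfrac{x^2}{2(t-\tau_1)^2}$. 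Adding the $t$-derivative of $+t$ for $x>0$ then yields the eikonal relation with $U_x=x/(t-\tau_1)$. For the $U^3$ branches this is the classical Lax--Ole\u{\i}nik computation.

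Continuity at $x=0$ follows from the first step of the piecewise lemma for $U_R$, together with $U_R(0,t)=U_L(0,t)$. This last equality should follow from taking limits of $-2\epsilon\log g(t)$; verifying it is where care is needed. We must also show that $U$ is locally Lipschitz, so that $U_t$ and $U_x$ exist a.e. and are locally bounded. This follows from the monotonicity in Lemma \ref{nonintersecting property}, which keeps $\tau_i$ and $\xi_1$ locally bounded away from the degenerate values.

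\textbf{Step 3: weak formulation.} For $\phi\in C_c^\infty(\mathbb{R}\times[0,\infty))$ we test the identity $u=U_x$ with $\phi_t$ and integrate by parts in $t$ and $x$ on each half plane:
\[
\int\!\!\int u\phi_t=-\int\!\!\int U_t\phi_x-\int U(x,0)\phi_x(x,0)\,dx=\int\!\!\int\Bigl(\tfrac12u^2+H\Bigr)\phi_x+\int u_0\phi(x,0)\,dx .
\]
The interface terms at $x=0$ cancel because $U$ is continuous there, and $U(x,0)=\int_0^x u_0$. Finally, $\int\!\!\int H\phi_x=-\int_0^\infty\phi(0,t)\,dt$ produces the Dirac source. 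This is exactly the weak form of \eqref{inviscid equation}.

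\textbf{Main obstacle.} The delicate point is Step 2 near the interface. We must show that $U$ is globally Lipschitz across $x=0$ and that the one-sided traces agree. We also need the eikonal identity to hold a.e. even on the switching curves $x_1(t)$, $x_2(t)$ of the piecewise lemmas; these curves have measure zero, which suffices. Our first approach to the trace identity is to compute $\lim_{\epsilon\to0}-2\epsilon\log g(t)$ from \eqref{definition 1} by the same Laplace-type argument used in Theorem \ref{vanishing viscosity limit}, and to check that both formulas reduce to it at $x=0$.
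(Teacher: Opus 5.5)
Your route is genuinely different from the paper's. The paper takes only $\phi\in C_c^\infty((0,\infty)\times(0,\infty))$ and integrates $u^\epsilon=\partial_x[-2\epsilon\log R^\epsilon]$ by parts. It then invokes dominated convergence to get $u^\epsilon\to\partial_xU_R$ in $\mathcal{D}'$ of the open quarter plane. No dominating function is given, and the second use presupposes pointwise convergence of $u^\epsilon$. The left side is treated implicitly in the same way. Finally the paper cites \cite{MR2028700} for the fact that this $u$ solves \eqref{inviscid equation}, so the interface, and with it the Dirac source, is never examined. You instead stop, correctly, at distributional convergence and work on all of $\mathbb{R}\times(0,\infty)$. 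You then prove the weak-solution property yourself through the potential, so the source appears explicitly from $\iint H\phi_x=-\int_0^\infty\phi(0,t)\,dt$. This gives a self-contained argument. The price is a set of regularity facts about the value function (Lipschitz bounds, the equation a.e., continuity at $x=0$) that the paper never has to establish.

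As written, however, Steps 2--3 contain two sign errors that do not cancel. From $U^\epsilon=-2\epsilon\log\theta^\epsilon$ and \eqref{reduced IVP} one gets $U^\epsilon_t+\frac12(U^\epsilon_x)^2=\epsilon U^\epsilon_{xx}+H(x)$. So the limit equation is $U_t+\frac12U_x^2=+H(x)$. Your own envelope computation confirms this: $\partial_tU_R^1+\frac12(\partial_xU_R^1)^2=0$, and the additive $+t$ contributes $+1$. Also $\iint u\phi_t=\iint U_t\phi_x+\int U(x,0)\phi_x(x,0)\,dx$, with a plus sign. With your signs, Step 3 yields $\iint(u\phi_t-\frac12u^2\phi_x)+\int_0^\infty\phi(0,t)\,dt-\int u_0\phi(x,0)\,dx=0$, which is not the weak form. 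With both signs corrected you get $\iint(u\phi_t+\frac12u^2\phi_x)+\int_0^\infty\phi(0,t)\,dt+\int u_0\phi(x,0)\,dx=0$, which is the weak form. The sign of the initial-data term in \eqref{weak formulation viscous problem} is itself a misprint.

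\textbf{The trace identity is needed already in Step 1.} In general $-\iint U\phi_x=\iint U_x\phi+\int_0^\infty[U(0^+,t)-U(0^-,t)]\phi(0,t)\,dt$. Without the identity you have identified the limit only up to a measure on $\{x=0\}$. You do not need $-2\epsilon\log g$ to prove it. At $x=0$ the term $x^2/(2(t-\tau))$ drops out. The remaining expressions are nonincreasing in $\tau$ (for $U_I^1$, after fixing $s=\tau-u$). Letting $\tau\uparrow t$, and using $U_R^3(0,t)\ge U_R^1(0,t)$ and $U_L^3(0,t)=U_L^2(0,t)$, one finds
\[
U_R(0,t)=U_L(0,t)=\min\Big\{\inf_{0<s<t,\ \xi\ge0}\Big[\frac{\xi^2}{2s}+s+\int_0^{\xi}u_0\Big],\ \min_{\xi\ge0}\Big[\frac{\xi^2}{2t}+\int_0^{-\xi}u_0\Big]\Big\}.
\]
The only $x$-dependence of $U_I^{1,2}$ is the nonnegative term $x^2/(2(t-\tau))$. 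Hence these value functions are lower semicontinuous at $x=0$. Being infima of functions continuous in $x$, they are also upper semicontinuous there. So the one-sided traces equal this common value.

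\textbf{Domination in Step 1.} You need bounds on compact subsets of $\mathbb{R}\times(0,\infty)$ including $x=0$, not only away from it. The estimates in the proof of Theorem \ref{vanishing viscosity limit} are pointwise ($\delta_1$ depends on the point) and carry factors of $x$, so they do not directly supply this. Comparison does: $\theta^\epsilon$ is a subsolution and $e^{t/(2\epsilon)}\theta^\epsilon$ a supersolution of $w_t=\epsilon w_{xx}$. Hence $-2\epsilon\log\Phi^\epsilon\le U^\epsilon\le t-2\epsilon\log\Phi^\epsilon$, where $\Phi^\epsilon$ is the heat evolution of $\theta_0^\epsilon$. The quantity $-2\epsilon\log\Phi^\epsilon$ is locally bounded uniformly in $\epsilon$ since $u_0\in L^\infty$.

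\textbf{Lipschitz bounds.} Lemma \ref{nonintersecting property} gives monotonicity of the minimizers, not Lipschitz bounds on $U$. Those must come from the first-order conditions or from a comparison of competitors.
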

\noindent

\begin{proof}
    Let $u^{\epsilon}(x,t)$ be the solution of \eqref{viscous equation}. We show that the vanishing viscosity limit of $u^{\epsilon}(x,t)$ is a solution of \eqref{inviscid equation}. First we consider the right-half domain $\{x>0\}$. Then we have \begin{equation}\label{limit}
        \displaystyle{\lim_{\epsilon \to 0}} u^{\epsilon}(x,t)= \displaystyle{\lim_{\epsilon \to 0}}\frac{\partial}{\partial x}\Big[-2\epsilon\log (R^{\epsilon}(x,t))\Big].
    \end{equation}
    Let $\phi\in C_c^{\infty} \Big((0,\infty)\times (0,\infty)\Big).$ Then 
    \begin{equation}\label{weak derivative 1}
        \int_0^{\infty}\Big[-2\epsilon\log (R^{\epsilon}(x,t))\Big]\phi_xdx=-\int_0^{\infty}\frac{\partial}{\partial x}\Big[-2\epsilon\log (R^{\epsilon}(x,t))\Big]\phi dx.
    \end{equation}
    Now by the Dominated convergence theorem, 
    \begin{equation*}
            \displaystyle{\lim_{\epsilon \to 0}}\int_0^{\infty}\Big[-2\epsilon\log (R^{\epsilon}(x,t))\Big]\phi_xdx=\int_0^{\infty} \displaystyle{\lim_{\epsilon \to 0}}\Big[-2\epsilon\log (R^{\epsilon}(x,t))\Big]\phi_xdx.
    \end{equation*} 
    Then from \eqref{weak derivative 1} and theorem \eqref{vanishing viscosity limit} we get
    \begin{equation*}
        -\displaystyle{\lim_{\epsilon \to 0}}\int_0^{\infty}\frac{\partial}{\partial x}\Big[-2\epsilon\log (R^{\epsilon}(x,t))\Big]\phi dx=\int_0^{\infty}U_R(x,t)\phi_xdx.
    \end{equation*}
    Again by Dominated convergence theorem, \begin{equation}\label{weak derivative 2}
        -\int_0^{\infty}\displaystyle{\lim_{\epsilon \to 0}}\frac{\partial}{\partial x}\Big[-2\epsilon\log (R^{\epsilon}(x,t))\Big]\phi dx=-\int_0^{\infty}\frac{\partial}{\partial x}\Big[U_R(x,t)\Big]\phi dx.
    \end{equation}
    Then combining \eqref{limit}, \eqref{weak derivative 2}, we get $$ \displaystyle{\lim_{\epsilon \to 0}} u^{\epsilon}(x,t)=\frac{\partial}{\partial x}\Big[U_R(x,t)\Big]=u(x,t).$$ This $u(x,t)$ satisfies the inviscid equation \eqref{inviscid equation} (see \cite{MR2028700}).

\end{proof}

\bibliographystyle{plain}	
\nocite{*}
\bibliography{Reference}
\end{document}